\documentclass[10pt,a4paper]{amsart}
\usepackage[latin1]{inputenc}
\usepackage[english]{babel}
\usepackage{amsmath}
\usepackage{amsfonts}
\usepackage{amssymb}
\usepackage{amsthm}
\usepackage[left=2.5cm,right=2.5cm,top=2.5cm,bottom=2.5cm]{geometry}
\usepackage{epsfig}
\usepackage{url}

\newtheorem{theorem}{Theorem}[section]
\newtheorem{definition}[theorem]{Definition}
\newtheorem{remark}[theorem]{Remark}
\newtheorem{example}[theorem]{Example}
\newtheorem{proposition}[theorem]{Proposition}
\newtheorem{assumption}[theorem]{Assumption}
\newtheorem{corollary}[theorem]{Corollary}
\newtheorem{lemma}[theorem]{Lemma}

\newcommand{\C}{\mathbb{C}}
\newcommand{\R}{\mathbb{R}}
\newcommand{\K}{\mathbb{K}}
\renewcommand{\H}{\mathcal{H}}
\newcommand{\N}{\mathbb{N}}
\newcommand{\z}{\zeta}
\renewcommand{\sp}[2]{\left( {#1} \mid {#2} \right)}

\newcommand{\dom}{D}
\newcommand{\norm}[1]{\left\Vert{#1}\right\Vert}
\newcommand{\abs}[1]{\left|{#1}\right|}

\newcommand{\B}{\mathcal{B}}
\newcommand{\ran}{\operatorname{ran}\,}

\renewcommand{\Re}{\operatorname{Re}\,}
\renewcommand{\Im}{\operatorname{Im}\,}
\newcommand{\He}{\operatorname{Sym}\,}
\newcommand{\Sym}{\He}
\newcommand{\Lip}{\operatorname{Lip}}
\newcommand{\dd}{\, \mathrm{d}}
\newcommand{\ii}{\mathrm{i}}
\newcommand{\ee}{\mathrm{e}}

\author{Bj\"orn Augner}
\address{Technische Universit\"at Darmstadt, Fachbereich Mathematik, Mathematische Modellierung und Analysis, Schlossgartenstr.\ 7, 64289 Darmstadt.}
\email{augner@mma.tu-darmstadt.de}
\thanks{This work has been partly supported by Deutsche Forschungsgemeinschaft (Grant JA 735/8-1).}
\title[Interconnection Structures of Port-Hamiltonian Type]{Well-posedness and Stability for Interconnection Structures of Port-Hamiltonian Type}

\begin{document}
\allowdisplaybreaks[1]
 \maketitle
 
 Version of \today.
 
 \begin{abstract}
  We consider networks of infinite-dimensional port-Hamiltonian systems $\mathfrak{S}_i$ on one-dimensional spatial domains.
  These subsystems of port-Hamiltonian type are interconnected via boundary control and observation and are allowed to be of distinct port-Hamiltonian orders $N_i \in \N$.
  Well-posedness and stability results for port-Hamiltonian systems of fixed order $N \in \N$ are thereby generalised to networks of such.
  The abstract theory is applied to some particular model examples.
 \end{abstract}
 
 \textbf{Keywords:}
 Infinite dimensional port-Hamiltonian systems, networks of PDE, feedback interconnection, contraction semigroups, stability analysis.  
 
 \textbf{MSC 2010:}
 \textit{Primary:} 93D15, 35B35.
 \textit{Secondary:} 35G46, 37L15, 47B44, 47D06.
 
\section{Introduction}

A \emph{port-based} modelling and analysis initially had been introduced in the 1960's to treat complex, multiphysics systems within a unified mathematical framework \cite{VanDerSchaftJeltsema_2014}.
Each of these subsystems, may it be of mechanical, electrical or thermal type etc.\ is described by its inner dynamics, usually by a system of ODEs or PDEs, on the one hand, and \emph{ports}, which enable the interconnection with other subsystems, on the other hand.
For \emph{port-Hamiltonian systems} the notion of an \emph{energy} has been highlighted, similar to classical Hamiltonian systems.
In contrast to the latter, however, the port-Hamiltonian formulation allows besides \emph{conservative}, i.e.\ energy preserving, elements also for \emph{dissipative}, i.e.\ energy dissipating, elements, e.g.\ frictional losses in mechanical systems or energy conversion in resistors within a electric circuit, where the energy leaves the system in form of heat while the latter is not included in the model.

For the description and analysis of port-Hamiltonian systems in a geometrical way, in \cite{VanDerSchaftMaschke_1995} the concept of a \emph{Dirac structure} had been introduced into the theory of port-Hamiltonian systems.
These Dirac structures have the very convenient property that (suitable) interconnections of Dirac structures again give a Dirac structure (of higher dimension).
The underlying models for the physical systems up to the 2000's had been primarily finite-dimensional, i.e.\ the inner dynamics of the subsystems interconnected via ports had usually been described by ODEs.
Probably with the article \cite{VanDerSchaftMaschke_2002} first attempts were made to extend the developed finite-dimensional theory of port-Hamiltonian systems to infinite-dimensional models, i.e.\ PDEs, and thereby filling in the gap between results on finite-dimensional systems and infinite-dimensional port-Hamiltonian systems.
E.g.\ first in \cite{LeGorrecZwartMaschke_2005}, it has been demonstrated that for linear infinite-dimensional port-Hamiltonian systems on an interval, i.e.\ evolution equations of the form
  \[
   \frac{\partial x}{\partial t}
    = \sum_{k=0}^N P_k \frac{\partial^k (\H x)}{\partial \z^k} (t,\z),
    \quad
    t \geq 0, \, \z \in (0,l)
  \]
with $x(t,\cdot) \in L_2(0,l;\K^d)$ (where $K = \R$ or $\K = \C$) and for suitable $P_k \in \K^{d \times d}$ and $\H: [0,1] \rightarrow \K^{d \times d}$, those boundary conditions (or, in a rather systems theoretic interpretation: linear closure relations) that lead to generation of a bounded (even contractive, when $L_2(0,l;\K^d)$ is equipped with an appropriate energy norm) $C_0$-semigroup can be characterised: Crucial is the dissipativity (w.r.t.\ the energy inner product), which can be checked solely via a matrix criterion on the boundary conditions \cite{LeGorrecZwartMaschke_2004}, \cite{LeGorrecZwartMaschke_2005}.
Next steps then have been sufficient conditions for asymptotic or uniform exponential stability of the system \cite{Villegas_2007}, \cite{VillegasEtAl_2009}, \cite{AugnerJacob_2014}.
Then followed efforts to generalise these results to PDE-ODE-systems, i.e.\ feedback control via a finite-dimensional linear control system \cite{RamirezZwartLeGorrec_2013}, \cite{AugnerJacob_2014}, and non-linear boundary feedback \cite{Trostorff_2014}, \cite{RamirezZwartLeGorrec_2017}, \cite{Augner_2018+}.
Here, we want to push forward into a different direction and in a sort return to the beginnings of port-Hamiltonian modelling:
What happens, if we consider a network of infinite-dimensional port-Hamiltonian subsystems instead of a single one, where the subsystems, just in the spirit of port-based modelling, are coupled via boundary control and observation of the distinct port-Hamiltonian subsystems?
To what extend do the results on well-posedness (in the sense of semigroup theory) and stability extend to this network case?
For special classes of PDE, especially the wave equation and several beam models, such an analysis is not new by any means, see e.g.\ \cite{VonBelow_1988}, \cite{LagneseLeugeringSchmidt_1994}, \cite{Leugering_1996}, \cite{DekoninckNicaise_2000} and \cite{MercierRegnier_2008}.

Before giving an outline of the organisation of this paper, let us emphasise that for systems with constant \emph{Hamiltonian energy densities} $\H_i: [0,l_i] \rightarrow \K^{d_i \times d_i}$, already alternative approaches to well-posedness and stability are well-known.
In particular, in that case it is often possible to determine (in an analytical way or via sufficiently good numerical approximation) the eigenvalues of the total system up to sufficient accuracy, and derive conclusions on well-posedness and stability.
For non-constant $\H_i$ such an approach is not that easily accessible, in particular there are situations, in which stability properties of a port-Hamiltonian system are very sensitive to multiplicative perturbation by $\H_i$, see e.g.\ \cite{Engel_2013} for an astonishing counter example.
Therefore, we deem the port-Hamiltonian approach as a legitimate way to describe and analyse such systems.

This manuscript is structured as follows.
Section \ref{sec:notation} serves as an introduction to the (mainly standard) notation we use throughout this paper, we recall some basic facts on (strongly continuous) semigroup theory, and the notion of a port-Hamiltonian system is introduced. In Section \ref{sec:basic_definition}, we recall previous results on the well-posedness and stability of infinite-dimensional linear port-Hamiltonian systems on a one-dimensional domain.
We do this with the background of particular interconnection schemes which have been considered up to now, and also comment on some of the techniques used to prove the corresponding results.
The subsequent Sections \ref{sec:PHS_networks}, \ref{sec:hybrid_multi_phs} and \ref{sec:networks_hybrid_systems} constitute the main sections of this paper:
First, in Section \ref{sec:PHS_networks} we provide the general well-posedness result for multi-port Hamiltonian systems interconnected in a dissipative way: As for single port-Hamiltonian systems, a dissipative linear closure relation is already enough to have existence of unique (strong) solutions for all initial data, and the solution depends continuously on the initial datum, i.e.\ the initial datum to solution map is given by a strongly continuous semigroup of linear operators.
Secondly, the focus lies on asymptotic and exponential stability for closed loop port-Hamiltonian systems, which we investigate in Section \ref{sec:hybrid_multi_phs} under additional structural constraints, e.g.\ the port-Hamiltonian systems being serially interconnected in a chain.
Then, Section \ref{sec:networks_hybrid_systems} is devoted to systems consisting themselves of systems of port-Hamiltonian systems again which for complex structures of the total system might be a helpful point of view for stability considerations.
We illustrate the results of the preceding sections by some applications networks of first order port-Hamiltonian systems and of Euler-Bernoulli beam type.
Finally, in Section \ref{sec:conclusion} we rephrase the main aspects of this paper and comment on further open or related problems.
After that, some technical results on the Euler-Bernoulli beam equation are collected in an appendix.

\section{Preliminaries}
\label{sec:notation}
\subsection{Notation}
 Let us fix some notation.
 Throughout, the field $\K = \R$ or $\C$ denotes real or complex numbers and all Banach or Hilbert spaces appearing are $\K$-Banach spaces or $\K$-Hilbert spaces, respectively.
 Without further notice, we assume that w.l.o.g.\ $\K = \C$ whenever we consider eigenvalues of operators.
 Note that this is no restriction since in case $\K = \R$ we may always consider the complexification of the involved operators and, e.g.\ for a generator $A$ of a $C_0$-semigroup $(T(t))_{t \geq 0}$ on a real Banach space $X$, the complexification $A^\C$ of the operator $A$ on the complexified Banach space $X^\C$ is the generator of a $C_0$-semigroup $(T^\C(t))_{t \geq 0}$ on $X^\C$ and $T^\C(t)$ is just the complexification of $T(t)$ for all $t \geq 0$.
 For any Banach spaces $X$ and $Y$, we denote by $\B(X,Y)$ the Banach space of bounded linear operators $T: X \rightarrow Y$, equipped with the operator norm $\norm{\cdot} = \norm{\cdot}_{\B(X,Y)}$.
 In the special case $X = Y$ we also write $\B(X) := \B(X,X)$.
 For any Banach space $E$, any compact set $K \subseteq \R^n$ and any open set $U \subseteq \R^n$, numbers $k \in \N_0 := \{0, 1 , \ldots \}$ and $p \in [1, \infty]$ we denote by
  $C(K;E)$, $C^k(K;E)$, $L_p(\Omega;E)$ and $W_p^k(\Omega;E)$ (special case $p = 2$: $H^k(\Omega;E) := W_2^k(\Omega;E)$) the spaces of $E$-valued continuous functions, $E$-valued $k$-times continuously differentiable functions, the $E$-valued Bochner-Lebesgue spaces and the $E$-valued Bochner-Sobolev spaces of degree $k$, with norms
   \begin{align*}
    \norm{f}_{C(K;E)}
     &:= \norm{f}_\infty
     := \sup_{e \in K} \norm{f(e)}_E
     \\
    \norm{f}_{C^k(K;E)}
     &:= \norm{f}_{C^k}
     := \sum_{\abs{\alpha} \leq k} \norm{\partial^{\alpha} f}_{C(K;E)}
     \\
    \norm{f}_{L_p(U;E)}
     &:= \norm{f}_p
     := \begin{cases}
       \left( \int_U \norm{f(x)}_E^p \, \dd x \right)^{1/p}, \quad &p \in [1, \infty) \\
       \operatorname{ess sup}_{x \in U} \norm{f(x)}_E, \quad &p = \infty
       \end{cases}
     \\
    \norm{f}_{W^k_p(U;E)}
     &:= \norm{f}_{k,p}
     := \begin{cases}
      \left( \sum_{\abs{\alpha} \leq k} \norm{\partial^\alpha f}_p^p \right)^{1/p}, \quad &p \in [1, \infty) \\
      \sum_{|\alpha| \leq k} \norm{\partial^\alpha f}_{\infty}, \quad &p = \infty.
       \end{cases}
   \end{align*}
 The notions $C(K;E)$ and $C^k(K;E)$ also extend to closed subsets $K$ of more general topological vector spaces $F$.
 For $p = 2$ and any Hilbert space $E$ with inner product $\sp{\cdot}{\cdot}_E$, the spaces $L_2(\Omega;E)$ and $H^k(\Omega;E)$ are Hilbert spaces with standard inner products
  \begin{align*}
   \sp{f}{g}_{L_2(\Omega;E)}
    &= \sp{f}{g}_{L_2}
    = \int_\Omega \sp{f(x)}{g(x)}_E \, \dd x \\
    \sp{f}{g}_{H^k(\Omega;E)}
    &= \sp{f}{g}_{H^k}
    = \sum_{|\alpha| \leq k} \int_\Omega \sp{\partial^\alpha f(x)}{\partial^{\alpha} g(x)}_E \, \dd x.
  \end{align*}
 Note that for $E = \K^d$ the $\K^d$-valued Bochner-Lebesgue and Bochner-Sobolev space are (up to an isomorphism) nothing but the $d$-fold product of the usual Lebesgue spaces and the usual Sobolev spaces, resp., i.e.\
  \[
   L_p(\Omega;\K^d)
    \cong \prod_{j=1}^d L_p(\Omega;\K),
    \quad
   W_p^k(\Omega;\K^d)
    \cong \prod_{j=1}^d W_p^k(\Omega;\K),
    \quad
   H^k(\Omega;\K^d)
    \cong \prod_{j=1}^d H^k(\Omega;\K).
  \]
 In particular, for $E = \R^d$ the strongly measurable functions are simply the measurable functions.

 \subsubsection{Some basic facts on semigroup theory}
 The focus of this manuscript lies on well-posedness (in the sense of semigroups) and stability for linear closure relation to boundary control and observation systems of infinite-dimensional port-Hamiltonian type. Therefore, let us recall some basic definitions and important theoretical results from semigroup theory that will be used heavily later on.
 
 We start with the definition of a $C_0$-semigroup.
 
 \begin{definition}[$C_0$-semigroup]
  Let $X$ be a Banach space and $(T(t))_{t \geq 0}$ be a family of bounded linear operators on $X$.
  Then $(T(t))_{t \geq 0}$ is called strongly continuous semigroup (for short: $C_0$-semigroup), if it has the following properties:
   \begin{enumerate}
    \item
     $T(0) = I$, the identity map on $X$,
    \item
     $T(s+t) = T(s) T(t)$ for all $s, t \geq 0$, (semigroup property) and
    \item
     $T(\cdot) x_0 \in C(\R_+; X)$ for every $x_0 \in X$, i.e.\ $(T(t))_{t \geq 0}$ has continuous trajectories (strong continuity).
   \end{enumerate}
  A $C_0$-semigroup $(T(t))_{t \geq 0}$ is called (strongly continuous) \emph{contraction semigroup}, if the operatornorm $\norm{T(t)}_{\B(X)} \leq 1$ for all $t \geq 0$.
 \end{definition}
  The existence of a $C_0$-semigroup $(T(t))_{t \geq 0}$ is closely related to well-posedness of the abstract Cauchy problem
   \[
    \frac{\dd}{\dd t} x(t)
     = A x(t)
     \quad (t \geq 0),
     \quad
    x(0)
     = x_0
     \tag{ACP}
     \label{ACP}
   \]
  in the sense of \emph{existence} and \emph{uniqueness} of solutions which \emph{continuously depend on the initial datum}.
  Roughly speaking, the abstract Cauchy problem \eqref{ACP} has for every initial datum $x_0 \in \dom(A)$ a unique classical solution $x \in C^1(\R_+; X)$ with values in $\dom(A)$, and the solution continuously depends on the initial datum $x_0$, if and only if there is a $C_0$-semigroup $(T(t))_{t \geq 0}$ on $X$ such that $A x = \lim_{t \rightarrow 0+} \frac{T(t) x - x}{x}$ for every $x \in \dom(A) = \{x \in X: \, \text{this limit exits}\}$, i.e.\ $A$ is the \emph{generator} of $(T(t))_{t \geq 0}$, and then $x(t) := T(t) x_0$ defines the unique classical solution, for every $x_0 \in \dom(A)$.
  For a precise statement of this result, see e.g.\ Proposition II.6.6 in \cite{EngelNagel_2000}.
  
  Linear operators $A$ generating a $C_0$-semigroup $(T(t))_{t \geq 0}$ can be exactly characterised by the general Hille-Yosida Theorem due to Feller, Miyadera and Phillips; see e.g.\ Theorem III.3.8 in \cite{EngelNagel_2000}. In this paper, however, all appearing semigroups will be contraction semigroups on Hilbert spaces, so that the Hilbert space version of the Lumer-Phillips Theorem, a special case of the Hille-Yosida theorem, and with conditions which are much easier to handle, can be applied.
  
  \begin{theorem}[Lumer-Phillips]
   Let $X$ be a Hilbert space with inner product $\sp{\cdot}{\cdot}$.
   Further, let $A: \dom(A) \subseteq X \rightarrow X$ be a densely defined, closed linear operator.
   Then $A$ generates a strongly continuous contraction semigroup if and only if
    \begin{enumerate}
     \item
      $A$ is dissipative, i.e.\ $\Re \sp{A x}{x} \leq 0$ for all $x \in X$, and
     \item
      $\ran (\lambda - A) = X$ for some (then: all) $\lambda > 0$.
    \end{enumerate}
   \begin{proof}
    We refer to Theorem II.3.15 in \cite{EngelNagel_2000} for the general Banach space version thereof.
   \end{proof}
  \end{theorem}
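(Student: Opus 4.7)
The plan is to prove the two implications separately, with the sufficiency direction carrying the bulk of the work. For necessity, assume $A$ generates a contraction semigroup $(T(t))_{t \geq 0}$. Dissipativity would follow by differentiating $t \mapsto \norm{T(t)x}^2$ from the right at $t = 0$: for $x \in \dom(A)$ this one-sided derivative equals $2 \Re \sp{Ax}{x}$, and the contraction property forces it to be non-positive. The range condition is then a direct consequence of the general Hille-Yosida Theorem cited in the excerpt, which implies $(0, \infty) \subseteq \rho(A)$, so that $\lambda - A$ is bijective for every $\lambda > 0$.

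For sufficiency, I would proceed in three steps. First, I would derive from dissipativity the fundamental resolvent estimate by the calculation
\[
 \norm{(\lambda - A) x} \, \norm{x}
  \geq \Re \sp{(\lambda - A) x}{x}
  = \lambda \norm{x}^2 - \Re \sp{Ax}{x}
  \geq \lambda \norm{x}^2
\]
valid for all $x \in \dom(A)$ and $\lambda > 0$. This gives injectivity of $\lambda - A$ together with the bound $\norm{(\lambda - A)^{-1}} \leq 1/\lambda$ on $\ran (\lambda - A)$, and, combined with closedness of $A$, already shows that $\ran (\lambda - A)$ is closed in $X$ for every $\lambda > 0$. Second, I would upgrade "for some $\lambda > 0$" to "for all $\lambda > 0$" by a connectedness argument: the set $\Lambda := \{\lambda > 0: \ran (\lambda - A) = X\}$ is nonempty by hypothesis, open because $\rho(A)$ is open for any closed operator, and closed in $(0, \infty)$ by a Neumann series expansion around $\lambda_n \in \Lambda$ that exploits the uniform resolvent bound $1/\lambda_n$. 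Connectedness of $(0, \infty)$ then forces $\Lambda = (0, \infty)$. Third, having established that $A$ is densely defined, closed, with $(0, \infty) \subseteq \rho(A)$ and resolvent estimates $\norm{(\lambda - A)^{-1}} \leq 1/\lambda$, the Hille-Yosida Theorem directly yields that $A$ generates a contraction $C_0$-semigroup.

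The main obstacle I anticipate is the second step, the connectedness argument upgrading the single-$\lambda$ range condition to all positive $\lambda$, since both openness and closedness of $\Lambda$ must be tied carefully to the uniform resolvent estimate and the closedness of $A$. The subtle point is ensuring that along a sequence $\lambda_n \to \lambda_0 > 0$ inside $\Lambda$ the inverses $(\lambda_n - A)^{-1}$ remain uniformly bounded so that the perturbation series for $(\lambda_0 - A)^{-1}$ can be justified. Since the theorem is classical, I would essentially reproduce the Engel-Nagel argument (Theorem II.3.15 in \cite{EngelNagel_2000}) and invoke their Hille-Yosida formulation to finish, rather than redevelop it from scratch.
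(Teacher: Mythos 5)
Your proposal is correct and is essentially the route the paper itself takes: the paper offers no argument of its own but simply defers to Theorem II.3.15 in \cite{EngelNagel_2000}, and your sketch is a faithful outline of exactly that standard proof (dissipativity plus Cauchy--Schwarz giving $\norm{(\lambda-A)x}\geq\lambda\norm{x}$, the connectedness argument upgrading the range condition to all $\lambda>0$, and Hille--Yosida to conclude). The only cosmetic remark is that density of $\dom(A)$ is a hypothesis of the statement rather than something to be "established", so that clause of your third step can be dropped.
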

  
 To describe the long-time behaviour of the solutions to the abstract Cauchy problem \eqref{ACP} in terms of the $C_0$-semigroup associated to it, several notions of stability exist. Here, we are interested in \emph{strong stability} and \emph{uniform exponential stability} which are defined as follows.
 Note that these stability concepts coincide for finite dimensional Banachspaces $X$, but are distinct if $\dim X = \infty$.
 
 \begin{definition}[Stability concepts]
  Let $(T(t))_{t \geq 0}$ be a $C_0$-semigroup on some Banach space $X$.
   \begin{enumerate}
    \item
     The semigroup is called (asymptotically) \emph{strongly stable} if for every $x \in X$ one has
      \[
       T(t) x \rightarrow 0
        \quad \text{in } X.
      \]
    \item
     It is called \emph{uniformly exponentially stable}, if there are constants $M \geq 1$ and $\omega < 0$ such that
      \[
       \norm{T(t)}_{\B(X)}
        \leq M \ee^{\omega t},
        \quad
        t \geq 0.
      \]
   \end{enumerate}
 \end{definition}
 
 Stability properties of a $C_0$-semigroup can be tested via certain spectral properties and bounds on the resolvent operators, see the following two theorems which will be employed later on.
 
 \begin{theorem}[Arendt-Batty-Lyubich-V\~u]
  Let $(T(t))_{t \geq 0}$ be a bounded $C_0$-semigroup on some Banach space $X$, i.e.\ there is $M \geq 1$ such that $\norm{T(t)}_{\B(X)} \leq M$ for all $t \geq 0$.
  Further assume that its generator $A$ has compact resolvent, i.e.\ $(\lambda - A)^{-1}: X \rightarrow X$ is a compact operator for some (then: all) $\lambda \in \rho(A)$.
  In this case, $(T(t))_{t \geq 0}$ is strongly stable if and only if the point spectrum satisfies $\sigma_p(A) \subseteq \C_0^- = \{\lambda \in \C: \Re \lambda < 0\}$.
 \end{theorem}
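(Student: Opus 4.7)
The plan is to split the equivalence, settle the easy direction by a direct eigenfunction argument, and reduce the other direction to the classical Arendt--Batty--Lyubich--V\~u theorem in its usual formulation: a bounded $C_0$-semigroup on a Banach space is strongly stable provided $\sigma(A) \cap \ii\R$ is at most countable and $\sigma_p(A^*) \cap \ii\R = \emptyset$. For the necessity direction, I would pick $\lambda \in \sigma_p(A)$ with eigenvector $x \neq 0$; boundedness of the semigroup forces $\sigma(A) \subseteq \overline{\C_0^-}$, hence $\Re \lambda \leq 0$, and if $\Re \lambda = 0$ then $\norm{T(t) x} = \norm{\ee^{\lambda t} x} = \norm{x} > 0$ for all $t \geq 0$, contradicting $T(t) x \to 0$. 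Consequently $\Re \lambda < 0$.

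For sufficiency, I would proceed in two steps. First, compactness of $(\lambda_0 - A)^{-1}$ combined with Riesz--Schauder theory shows that $\sigma(A)$ is a discrete set of isolated eigenvalues of finite algebraic multiplicity, so $\sigma(A) \cap \ii\R$ is automatically at most countable. Second, I would transfer the spectral hypothesis to $A^*$: since the Hilbert-space adjoint of a compact operator is compact, $(\overline{\lambda_0} - A^*)^{-1} = ((\lambda_0 - A)^{-1})^*$ is compact too, so $A^*$ likewise has compact resolvent; hence $\sigma(A^*) = \sigma_p(A^*) = \overline{\sigma_p(A)}$. The assumption $\sigma_p(A) \subseteq \C_0^-$ then yields $\sigma_p(A^*) \cap \ii\R = \emptyset$, both hypotheses of the ABLV theorem are met, and strong stability follows.

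The main obstacle I anticipate is justifying the identity $\sigma_p(A^*) = \overline{\sigma_p(A)}$, since on a general Banach space one only has the inclusion $\overline{\sigma_p(A)} \subseteq \sigma_p(A^*)$ and the residual spectrum of $A$ may contribute additional eigenvalues to $A^*$. In the compact resolvent setting, however, the factorisation $\mu - A = (\lambda_0 - A)\bigl(I - (\lambda_0 - \mu)(\lambda_0 - A)^{-1}\bigr)$ reduces the analysis of $\mu - A$ to the Fredholm alternative for a compact perturbation of the identity, which rules out residual spectrum and identifies each spectral value of $A$ as a genuine eigenvalue. Modulo this structural observation, the remainder of the argument is essentially a citation of the ABLV theorem.
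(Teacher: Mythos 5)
Your argument is correct and is essentially what the paper does: the paper's proof is simply a citation of the general Arendt--Batty--Lyubich--V\~u theorem (Theorem V.2.21 in Engel--Nagel), and your proposal supplies exactly the routine reduction to it --- the eigenvector computation for necessity, and Riesz--Schauder/Fredholm theory to identify $\sigma(A)=\sigma_p(A)$ as a discrete eigenvalue set and to verify the countability and adjoint-spectrum hypotheses. The only cosmetic point is that the statement is on a Banach space, so you should invoke Schauder's theorem for the Banach-space adjoint (where $\sigma(A^*)=\sigma(A)$ without conjugation) rather than the Hilbert-space adjoint; the argument goes through unchanged.
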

 \begin{proof}
  See Theorem V.2.21 in \cite{EngelNagel_2000} for the general version of this Tauberian type theorem.
 \end{proof}
 
 \begin{theorem}[Gearhart-Pr\"uss-Huang]
  Let $(T(t))_{t \geq 0}$ be $C_0$-semigroup on some Hilbert space $X$.
  It is uniformly exponentially stable if and only if the following two properties hold true
   \begin{enumerate}
    \item
     $\sigma(A) \subseteq \C_0^-$, i.e.\ the spectrum lies in the complex left half-plane, and
    \item
     $\sup_{\beta \in \R} \norm{(\ii \beta - A)^{-1}} < \infty$, i.e.\ the resolvent operators are uniformly bounded on the imaginary axis.
   \end{enumerate}
 \end{theorem}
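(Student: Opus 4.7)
The argument splits into the easy necessity and the substantive sufficiency direction.

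For necessity, if $\norm{T(t)}_{\B(X)} \leq M \ee^{\omega t}$ with $\omega < 0$, then for every $\lambda$ with $\Re \lambda > \omega$ the Laplace integral $\int_0^\infty \ee^{-\lambda t} T(t) x \dd t$ converges absolutely and represents $(\lambda - A)^{-1}$. Hence $\sigma(A) \subseteq \{\Re \lambda \leq \omega\} \subseteq \C_0^-$, and the resolvent bound $\norm{(\ii \beta - A)^{-1}}_{\B(X)} \leq M / \abs{\omega}$ holds uniformly in $\beta \in \R$.

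For sufficiency, the plan is to combine Plancherel's theorem with a Neumann-series argument and Datko's theorem. First, a Neumann series centred at each $\ii \beta_0 \in \rho(A)$, using the hypothesis $M := \sup_{\beta \in \R} \norm{(\ii \beta - A)^{-1}} < \infty$, shows that $\rho(A)$ contains the full strip $\{\lambda : \abs{\Re \lambda} < 1/(2M)\}$ on which $\norm{(\lambda - A)^{-1}} \leq 2M$. Second, for any $\alpha > \omega_0(A)$ (the growth bound) the exponentially damped orbit $t \mapsto \ee^{-\alpha t} T(t) x$ lies in $L_2(\R_+;X)$ with Fourier--Laplace transform $\beta \mapsto (\alpha + \ii \beta - A)^{-1} x$; Plancherel's identity then reads
\[
  \int_0^\infty \ee^{-2 \alpha t} \norm{T(t) x}^2 \dd t = \frac{1}{2 \pi} \int_\R \norm{(\alpha + \ii \beta - A)^{-1} x}^2 \dd \beta.
\]
Third, for $x \in \dom(A)$ the resolvent identity $(\lambda - A)^{-1} x = \lambda^{-1}(x + (\lambda - A)^{-1} A x)$ with $\lambda = \alpha + \ii \beta$ bounds the integrand on the right by a term of order $\abs{\beta}^{-2}$ for $\abs{\beta} \geq 1$, uniformly in $\alpha \in [0, \alpha_0]$. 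Letting $\alpha \downarrow 0$ by monotone convergence then yields $T(\cdot) x \in L_2(\R_+; X)$ for every $x \in \dom(A)$, and a density plus closed graph argument extends this to every $x \in X$ with a uniform estimate $\norm{T(\cdot) x}_{L_2(\R_+;X)} \leq \tilde C \norm{x}$. Finally, Datko's theorem, asserting that $L_p$-integrability ($1 \leq p < \infty$) of every orbit of a $C_0$-semigroup forces uniform exponential stability, closes the argument.

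The main obstacle is the passage from the bare resolvent hypothesis on $\ii \R$ to $L_2$-integrability of orbits: this is where the Hilbert space assumption is essential (there are Banach space counterexamples), since only Plancherel's identity converts a uniform resolvent bound into an orbit estimate. The supporting technicalities are the Neumann strip which moves $\ii \R$ into the interior of $\rho(A)$, and the use of $\dom(A)$ as a regularised dense subspace so that the resolvent identity produces integrable decay in $\beta$ before extending by density.
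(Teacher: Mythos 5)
The paper itself only cites Theorem V.1.11 in Engel--Nagel, and your overall strategy (necessity via the Laplace representation of the resolvent; sufficiency via a Neumann strip, Plancherel, and Datko's theorem) is exactly the strategy of that cited proof, so the route is the right one and the necessity half is complete. In the sufficiency half, however, two of your steps do not go through for a general $C_0$-semigroup, and both failures have the same source: you never establish the uniform resolvent bound on the \emph{whole} right half-plane. First, your Neumann series only yields $\sup\{\norm{(\lambda-A)^{-1}} : 0 \leq \Re\lambda \leq 1/(2M)\} \leq 2M$, while the Plancherel identity is only directly available for $\alpha > \omega_0(A)$ (the growth bound), since only then is $t \mapsto \ee^{-\alpha t}T(t)x$ known to lie in $L_2(\R_+;X)$ with Fourier transform $(\alpha+\ii\beta-A)^{-1}x$. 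If $\omega_0(A) \geq 1/(2M)$ these two regions do not overlap, so the claim that the integrand is dominated ``uniformly in $\alpha \in [0,\alpha_0]$'' (with $\alpha_0 > \omega_0(A)$, as your appeal to Plancherel requires) is unjustified: on the intermediate strip $1/(2M) \leq \Re\lambda \leq \omega_0(A)+1$ you have no a priori bound on the resolvent, and letting $\alpha \downarrow 0$ in an identity that is only known for $\alpha > \omega_0(A)$ is not a monotone convergence argument. Second, the estimate you obtain on $\dom(A)$ has the form $\norm{T(\cdot)x}_{L_2(\R_+;X)} \lesssim \norm{x} + \norm{Ax}$, i.e.\ it is a bound in the \emph{graph} norm. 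Datko's theorem needs $L_2$-integrability of the orbit of \emph{every} $x \in X$ (or a bound by $\norm{x}$ alone on a dense set), and neither density nor the closed graph theorem upgrades a graph-norm bound on $\dom(A)$ to an $X$-norm bound on $X$: the closed graph theorem applied to $x \mapsto T(\cdot)x$ only gives boundedness if the map is already known to be everywhere defined on $X$.

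The standard repair (and the one in the cited reference) fixes both points at once: one first shows $K := \sup_{\Re\lambda>0}\norm{(\lambda-A)^{-1}} < \infty$ and then, for arbitrary $x \in X$, fixes $\alpha_1 > \omega_0(A)$ and uses the resolvent identity
\begin{equation*}
 (\alpha+\ii\beta-A)^{-1}x
  = \bigl(I + (\alpha_1-\alpha)(\alpha+\ii\beta-A)^{-1}\bigr)(\alpha_1+\ii\beta-A)^{-1}x
\end{equation*}
to dominate $\int_\R \norm{(\alpha+\ii\beta-A)^{-1}x}^2 \dd\beta$ by $(1+\alpha_1 K)^2 \int_\R \norm{(\alpha_1+\ii\beta-A)^{-1}x}^2 \dd\beta = 2\pi(1+\alpha_1 K)^2\int_0^\infty \ee^{-2\alpha_1 t}\norm{T(t)x}^2\dd t \lesssim \norm{x}^2$, uniformly in $\alpha \in (0,\alpha_1]$; then monotone convergence and Datko apply to every $x \in X$. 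Deriving the half-plane bound from your hypotheses (1) and (2) is itself the only nontrivial point, and it is immediate precisely in the situation this paper uses the theorem: for a contraction (or merely bounded) semigroup one has $\norm{(\lambda-A)^{-1}} \leq M_0/\Re\lambda$ for $\Re\lambda>0$, which together with your Neumann strip gives $K \leq 2M M_0$. So your argument is easily completed in the contractive setting of this paper, but as a proof of the theorem as stated, for an arbitrary $C_0$-semigroup on a Hilbert space, the uniformity claim and the density step are genuine gaps that need the half-plane resolvent bound to be established first.
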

 \begin{proof}
  See Theorem V.1.11 in \cite{EngelNagel_2000}.
 \end{proof}
 
 \begin{remark}
  \label{rem:GPH}
  For $\ii \R \subseteq \rho(A)$, the condition $\sup_{\beta \in \R} \norm{(\ii \beta - A)^{-1}} < \infty$ is equivalent to the following property:
  \begin{quote}
  For every sequence $(x_n, \beta_n)_{n \geq 1} \subseteq \dom(A) \times \R$ with
   \begin{enumerate}
    \item
     $\sup_{n \geq 1} \norm{x_n}_X < \infty$,
    \item
     $\abs{\beta_n} \rightarrow \infty$ as $n \rightarrow \infty$, and
    \item
     $A x_n - \ii \beta_n x_n \rightarrow 0$ as $n \rightarrow \infty$,
   \end{enumerate}
  it follows that $x_n \rightarrow 0$ in $X$ as $n \rightarrow \infty$.
  \end{quote}
 \end{remark}
 This characterisation proves quite helpful for stability analysis of port-Hamiltonian systems.
 For further details on semigroup theory, we refer to the monograph \cite{EngelNagel_2000}.

\subsection{Basic definitions} 

Within this subsection we introduce the notion of a (linear, infinite-dimensional) port-Hamiltonian system (in boundary control and observation form) as we use it later on for interconnection of several systems of port-Hamiltonian type to networks.
Let us start with the basic definition of a single open-loop infinite-dimensional port-Hamiltonian system in boundary control and observation form.

 \begin{definition}[Port-Hamiltonian System]
  \label{def:PHS}
  We call a triple $\mathfrak{S} = (\mathfrak{A}, \mathfrak{B}, \mathfrak{C})$ of linear operators an (open-loop, linear, infinite-dimensional) \emph{port-Hamiltonian system} (in boundary control and observation form) of order $N \in \N$, if
   \begin{enumerate}
    \item
     The (maximal) \emph{port-Hamiltonian operator} $\mathfrak{A}: \dom(\mathfrak{A}) \subseteq L_2(0,1;\K^d) \rightarrow L_2(0,1;\K^d)$ is a linear differential operator of the form
      \begin{align*}
       \mathfrak{A} x
        &= \sum_{k=0}^N P_k \frac{\dd^k}{\dd \z^k} (\H x) \\
       \dom(\mathfrak{A})
        &= \{ x \in L_2(0,1;\K^d): \, \H x \in H^N(0,1;\K^d) \}
      \end{align*}
     where $\H \in L_\infty(0,1;\K^{d \times d})$ is coercive on $L_2(0,1;\K^d)$, i.e.\ there is $m > 0$ such that
      \[
       \sp{\H(\z) \xi}{\xi}_{\K^d}
        \geq m \abs{\xi}_{\K^d}^2,
        \quad
        \xi \in \K^d,
         \quad
         \text{a.e. }\, \z \in (0,1),
      \]
     and $P_k \in \K^{d \times d}$ ($k = 1, \ldots, N$) are matrices satisfying the anti-/symmetry relations $P_k^* = (-1)^{k+1} P_k$ ($k = 1, \ldots, N$) and such that the matrix $P_N$, i.e.\ the matrix corresponding to the principal part of the differential operator $\mathfrak{A}$, is invertible, whereas $P_0 \in L_\infty(0,1;\K^{d \times d})$ may depend on the spatial variable $\z \in (0,1)$.
    \item
     The \emph{boundary input map} $\mathfrak{B}$ and the \emph{boundary output map} $\mathfrak{C}$ are linear $\K^{Nd}$-valued operators with common domain
      \[
       \dom(\mathfrak{A}) = \dom(\mathfrak{B}) = \dom(\mathfrak{C})
      \]
     of the form
      \begin{align*}
       \left( \begin{array}{c} \mathfrak{B} x \\ \mathfrak{C} x  \end{array} \right)
        &= \left[ \begin{array}{c} W_B \\ W_C \end{array} \right] \tau(\H x),
        &x \in \dom(\mathfrak{A}) \\
       \tau(y)
        &= \left( y(1), y'(1), \ldots, y^{(N-1)}(1), y(0), \ldots, y^{(N-1)}(0) \right) \in \K^{Nd},
        &y \in H^N(0,1;\K^d)
      \end{align*}
      for matrices $W_B, W_C \in \K^{Nd \times 2Nd}$ such that $\left[ \begin{smallmatrix} W_B \\ W_C \end{smallmatrix} \right]$ is invertible.
   \end{enumerate}
 \end{definition}

 \begin{remark}
  More generally, we call a triple $\mathfrak{S}_1 = (\mathfrak{A}_1, \mathfrak{B}_1, \mathfrak{C}_1)$ with $\mathfrak{B}_1, \mathfrak{C}_1: \dom(\mathfrak{B}_1) = \dom(\mathfrak{A})_1 = \dom(\mathfrak{C}_1) \rightarrow \K^k$ for some $k \in \{0, 1, \ldots, Nd\}$ a port-Hamiltonian system as well, if there are linear operators $\mathfrak{A}$, $\mathfrak{B} = (\mathfrak{B}_0, \mathfrak{B}_1)$ and $\mathfrak{C} = (\mathfrak{C}_0, \mathfrak{C}_1)$ such that
   \[
    \mathfrak{S} = (\mathfrak{A}, \mathfrak{B}, \mathfrak{C})
     \quad
     \text{is port-Ham.\ system in the sense of Definition } \ref{def:PHS}
     \quad
     \text{and}
     \quad
     \mathfrak{A}_1 = \mathfrak{A}|_{\ker \mathfrak{B}_0}.
   \]
  This tacit convention makes it possible to consider a partial interconnection of port-Hamiltonian systems (of same order $N$) to be a port-Hamiltonian system itself.
 \end{remark}

Whenever $\mathfrak{S}$ is a port-Hamiltonian system on the space $L_2(0,1;\K^d)$, by coercivity of $\H$ the sesquilinear form
     \[
      \sp{\cdot}{\cdot}_\H: \quad
       \left( L_2(0,1;\K^d) \right)^2 \rightarrow \K,
       \quad
       \sp{f}{g}_\H
        := \int_0^1 \sp{f(\z)}{\H(\z) g(\z)}_{\K^d} \, \dd \z
     \]
    defines an inner product on $L_2(0,1;\K^d)$ and the corresponding norm $\norm{\cdot}_\H$ is equivalent to the standard norm $\norm{\cdot}_{L_2}$.
    We call $\sp{\cdot}{\cdot}_\H$ the \emph{energy inner product} and set the \emph{energy state space} $X$ to be the Hilbert space $L_2(0,1;\K^d)$ equipped with inner product $\sp{\cdot}{\cdot}_X := \sp{\cdot}{\cdot}_\H$ (and hence, the \emph{energy norm} $\norm{\cdot}_X = \norm{\cdot}_\H$).
    Note that the operator $\mathfrak{A}: \dom(\mathfrak{A}) \subseteq X \rightarrow X$ is a closed operator as a conjunction of the continuous matrix multiplication operator $\H(\cdot)$ on $X$ and the closed (thanks to $P_N$ being invertible) differential operator $\sum_{k=0}^N P_k \frac{\dd^k}{\dd \z^k}: \, H^N(0,1;\K^d) \subseteq X \rightarrow X$. 
    
 \begin{remark}
  For the moment, let us consider an infinite-dimensional port-Hamiltonian system $\mathfrak{S}$ with $P_0 = 0$.
    Then for every $x, y \in X$ such that $\H x, \H y \in C_c^\infty(0,1;\K^d)$ it holds via integration by parts that
     \[
      \sp{\mathfrak{A} x}{y}_X
       = - \sp{x}{\mathfrak{A} y}_X
     \]
    i.e.\ the operator is formally skew-symmetric on the space $X$.
    For the case $P_0 \not= 0$ this holds exactly in the case that $P_0(\z)^* = - P_0(\z)$ for a.e.\ $\z \in (0,1)$.
 \end{remark}

 When looking for dissipative closure relations of the type $\mathfrak{B} x = K \mathfrak{C} x$ for some matrix $K \in \K^{Nd \times Nd}$ it is convenient to have the property of passivity for the port-Hamiltonian system.

 \begin{definition}[Passive Systems]
  Let $\mathfrak{S} = (\mathfrak{A}, \mathfrak{B}, \mathfrak{C})$ be a port-Hamiltonian system in boundary control and observation form.
  The system $\mathfrak{S}$ is called
   \begin{enumerate}
    \item
     \ldots \emph{impedance passive}, if
      \[
       \Re \sp{\mathfrak{A} x}{x}_X \leq \sp{\mathfrak{B} x}{\mathfrak{C} x}_{\K^{Nd}},
        \quad
        x \in \dom(\mathfrak{A}).
      \]
    \item
     \ldots \emph{scattering passive}, if
      \[
       \Re \sp{\mathfrak{A} x}{x}_X \leq \abs{\mathfrak{B} x}_{\K^{Nd}}^2 - \abs{\mathfrak{C} x}_{\K^{Nd}}^2,
        \quad
        x \in \dom(\mathfrak{A}).
      \]
   \end{enumerate}
 \end{definition}
 
 \begin{remark}
  \begin{enumerate}
   \item
    Note that both notions of passivity do not depend on the Hamiltonian energy density matrix function $\H$: A port-Hamiltonian system $\mathfrak{S}$ is impedance passive (scattering passive) if and only if the corresponding port-Hamiltonian system for $\H = I$ is impedance passive (scattering passive).
   \item
    A port-Hamiltonian system is impedance passive (scattering passive) if and only if the symmetric part $\He P_0(\z) := \frac{1}{2} (P_0(\z) + P_0(\z)^*)$ of $P_0$ is negative semi-definite for a.e.\ $\z \in (0,1)$ and $W_B, W_C$ satisfy a certain matrix condition (including also the matrices $P_k$ ($k \geq 1$)), see \cite{LeGorrecZwartMaschke_2005}.
  \end{enumerate}
 \end{remark}
 
 Further convention: Besides the energy state space $X = L_2(0,1; \K^d)$ (equipped with the energy inner product), extended energy state spaces $\hat X = X \times X_c$ for some finite dimensional Hilbert space $X_c$ will be used as well, and its elements are denoted by $\hat x = (x, x_c) \in X \times X_c$.
 Operators acting on elements of such product energy state spaces are denoted by a hat, e.g.\ $\hat A$, $\hat {\mathfrak{A}}$, $\hat {\mathfrak{B}}$, $\hat {\mathfrak{C}}$ and $\hat T(t)$.

\section{Examples and Previous Results}
\label{sec:basic_definition}

We give some examples of dissipative closure relations which had been considered previously in the literature.
Additionally, we recall the main results on well-posedness and stability for these linear closure relations.
Starting from open-loop passive port-Hamiltonian systems one can easily obtain dissipative operators when closing with a suitable closure relation and possibly interconnects the port-Hamiltonian system with either another port-Hamiltonian system or an impedance passive control and observation system.
Below we list some particular examples for such static or dynamic closure relations.

 \begin{example}[Dissipative, static closure]
  Assume that $\mathfrak{S}$ is an impedance passive port-Hamiltonian system and let $K \in \K^{Nd \times Nd}$ be a matrix with negative semi-definite symmetric part
   \[
    \He K
     := \frac{1}{2} \left(K + K^* \right)
     \leq 0
   \]
  (the simplest choice being $K = 0$) and define $A: \dom(A) \subseteq X \rightarrow X$ by
   \begin{align*}
    A x
     &:= \mathfrak{A} x
     \\
    \dom(A)
     &:= \{ x \in \dom(\mathfrak{A}): \, \mathfrak{B} x = K \mathfrak{C} x \}.
   \end{align*}
  Then $A$ is a dissipative operator on $X$, and therefore generates a strongly continuous contraction semigroup $(T(t))_{t \geq 0}$ on $X$, see Theorem \ref{thm:generation_thm_static} below.
 \end{example}
 
 \begin{proof}
 Dissipativity can be checked easily, using the impedance passivity of $\mathfrak{S}$ and the negative semi-definiteness of $\He (K)$.
 Then, the generator property follows from Theorem \ref{thm:generation_thm_static} below.
 \end{proof}
 
 The first result on well-posedness of infinite-dimensional port-Hamiltonian systems has been due to Y.~Le~Gorrec, H.~Zwart and B.~Maschke \cite{LeGorrecZwartMaschke_2005} who proved that for operators of port-Hamiltonian type a dissipative linear closure relation, i.e.\ dissipative boundary conditions, is already enough for the corresponding abstract Cauchy problem
  \[
   \begin{cases}
    \frac{\dd}{\dd t} x(t) = A x(t),
     \quad
     t \geq 0
     \\
    x(0)
     = x_0 \in X
   \end{cases}
  \]
 to be well-posed, i.e.\ for every initial value $x_0 \in \dom(A)$, there is a unique classical solution $x \in C^1(\R_+;X) \cap C(\R_+; \dom(A))$ of this Cauchy problem, where $\dom(A)$ is equipped with the graph norm of $A$, and the solution depends continuously on the initial datum $x_0$ and has non-increasing energy $\frac{1}{2} \norm{x(t)}_X^2$.
 In other words, if $A$ is dissipative, then $A$ generates a strongly continuous contraction semigroup $(T(t))_{t \geq 0}$ on $X$.
 This is
 
  \begin{theorem}[Le~Gorrec, Zwart, Maschke (2005)]
   \label{thm:generation_thm_static}
   Let $\mathfrak{S} = (\mathfrak{A}, \mathfrak{B}, \mathfrak{C})$ be any port-Hamiltonian system and $K \in \K^{Nd \times Nd}$.
   Then, the operator \[ A = \mathfrak{A}|_{\ker (\mathfrak{B} - K \mathfrak{C})} \] generates a contractive $C_0$-semigroup $(T(t))_{t \geq 0}$ on $X = (X, \sp{\cdot}{\cdot}_X)$ if and only if $A$ is dissipative on $X$.
  \end{theorem}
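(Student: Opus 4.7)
The natural tool is the Lumer--Phillips theorem stated above, so the direction ``generator $\Rightarrow$ dissipative'' is immediate. For the converse, assuming $A$ is dissipative, I have to verify that $A$ is densely defined, closed, and satisfies $\ran(\lambda - A) = X$ for some $\lambda > 0$.

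Density of $\dom(A)$ follows from the observation that $\{x \in X : \H x \in C_c^\infty(0,1;\K^d)\} \subseteq \dom(A)$ (all boundary traces vanish) and that this set is dense in $X$ since $\H$ is boundedly invertible. Closedness of $A$ is a consequence of three facts: the differential operator $\sum_{k=0}^N P_k \frac{\dd^k}{\dd \z^k}$ is closed on $H^N(0,1;\K^d)$ thanks to invertibility of $P_N$, the multiplication operator $\H(\cdot)$ is bounded and boundedly invertible on $X$, and the boundary constraint cutting out $\dom(A)$ is the kernel of an operator continuous with respect to the graph norm of $\mathfrak{A}$ (via continuity of the trace map on $H^N$). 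For the range condition, absorb $P_0 \H(\cdot)$ as a bounded perturbation so that w.l.o.g.\ $P_0 = 0$. Setting $y = \H x$, solving $(\lambda - A) x = f$ becomes the $N$-th order linear ODE system
\[
 \sum_{k=1}^N P_k y^{(k)} = \lambda \H^{-1} y - f, \quad W \tau(y) = 0, \quad W := W_B - K W_C.
\]
Invertibility of $P_N$ recasts this as a first-order $Nd$-dimensional ODE whose fundamental matrix, together with a particular solution, reduces the problem to an affine $Nd \times Nd$ linear system for the free boundary parameters.

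The main obstacle is establishing invertibility of that boundary system. The key tool is an integration-by-parts identity of the form $2 \Re \sp{\mathfrak{A} x}{x}_X = \sp{\tau(\H x)}{\Sigma_N \tau(\H x)}_{\K^{2Nd}}$, where $\Sigma_N \in \K^{2Nd \times 2Nd}$ is an explicit non-degenerate Hermitian matrix built from $P_1, \ldots, P_N$ via the anti-/symmetry relations $P_k^* = (-1)^{k+1} P_k$. Dissipativity of $A$ then translates into the matrix inequality $W \Sigma_N W^* \leq 0$, while invertibility of the stacked boundary matrix from Definition \ref{def:PHS} ensures that $W$ has full row rank $Nd$. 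A short linear-algebraic computation identifies $A^*$ as a port-Hamiltonian operator of the same form, whose boundary constraint matrix is complementary to $W$ relative to $\Sigma_N$, and shows that $A^*$ inherits dissipativity from $A$. Since a closed, densely defined operator on a Hilbert space whose adjoint is dissipative is automatically $m$-dissipative, i.e.\ satisfies $\ran(\lambda - A) = X$ for every $\lambda > 0$, the Lumer--Phillips hypotheses are verified and $A$ generates the desired contractive $C_0$-semigroup.
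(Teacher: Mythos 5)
Your proposal is correct in outline, but it does not follow the route taken here: in this paper Theorem \ref{thm:generation_thm_static} is only quoted from \cite{LeGorrecZwartMaschke_2005}, and the self-contained proof the paper actually supplies (for the more general network operator, Proposition \ref{prop:gen_static_feedback}) settles the range condition quite differently. There, after reducing to $\H = I$ and $P_0 = 0$, the equation $(\lambda - \mathfrak{A})x = f$ is integrated explicitly via the fundamental matrix $\ee^{\z L}$ of the associated first-order system, the boundary condition collapses to an $Nd \times Nd$ linear system $(W_B - K W_C) T \hat h = \cdots$, and invertibility of that system is obtained by contradiction: a nontrivial kernel element would yield $0 \neq x \in \dom(A)$ with $A x = \lambda x$ for $\lambda = 1 > 0$, which dissipativity forbids. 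You set up exactly this reduction but then abandon it in favour of the adjoint argument of \cite{LeGorrecZwartMaschke_2005}: identify $A^*$ as $-\mathfrak{A}$ subject to the $\Sigma_N$-complementary boundary conditions, show $A^*$ is dissipative, and invoke the Hilbert-space corollary of Lumer--Phillips that a densely defined, closed, dissipative operator with dissipative adjoint is $m$-dissipative. Both routes are legitimate; the paper's avoids computing $A^*$ altogether (which is the longest part of the original proof), while yours avoids the explicit resolvent representation and is closer in spirit to the Dirac-structure picture. Two points in your sketch deserve more care than the phrase ``short linear-algebraic computation'' suggests. First, passing from ``$\sp{v}{\Sigma_N v} \leq 0$ for all $v \in \ker W$'' (which is what dissipativity literally gives, with $W := W_B - K W_C$) to a condition on $W \Sigma_N W^*$ is not tautological, since $W \Sigma_N W^*$ sees the form on $\ran W^*$, not on $\ker W$; with $W$ of full row rank and $\Sigma_N$ of signature $(Nd,Nd)$ the correct equivalent is $W \Sigma_N W^* \geq 0$, not $\leq 0$ (cf.\ Lemma 7.3.2 in \cite{JacobZwart_2012}). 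Second, the step ``$A^*$ inherits dissipativity'' rests on the fact that $\ker W$, being $Nd$-dimensional and $\Sigma_N$-nonpositive in a space of signature $(Nd,Nd)$, is a \emph{maximal} nonpositive subspace, so that its $\Sigma_N$-orthogonal complement --- which parametrises the boundary traces of $\dom(A^*)$ --- is nonnegative, making $-\sp{\tau(\H y)}{\Sigma_N \tau(\H y)} \leq 0$ there. Neither point invalidates the argument, but both are where the actual content of the converse direction lives and should be written out.
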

  
  \begin{proof}
  For the proof, see \cite{LeGorrecZwartMaschke_2005}.
  \end{proof}

 While well-posedness for itself is an important property, often one is not satisfied with well-posedness alone, but also looks for stability properties of the abstract Cauchy problem associated to $A$.
 In contrast to well-posedness -- for which the case of general coercive $\H \in L_\infty(0,1;\K^{d \times d})$ can be reduced to the special case $\H = I$, see Lemma 7.2.3 in \cite{JacobZwart_2012} -- stability properties of $A$ may (and will, as Engel \cite{Engel_2013} showed) generally depend on the Hamiltonian density matrix function which can be seen as a multiplicative perturbation to the operator $A$ for $\H = I$.
 However, as has been known for the wave equation, the Timoshenko beam equation and the Euler-Bernoulli beam equation, there are examples where one could expect that some classes of linear boundary feedback relations imply asymptotic stability, i.e.\ trajectory-wise for every initial datum $x_0 \in X$, or even uniform exponential stability, i.e.\ the energy decay can be bounded by a exponentially decaying function times initial energy, where the exponential decay rate is \emph{independent} of the initial datum $x_0 \in X$.
 For the particular case of first order port-Hamiltonian systems such stability results have first been proved in the Ph.D. thesis \cite{Villegas_2007} and the research article \cite{VillegasEtAl_2009}, showing that for first order port-Hamiltonian systems it is enough to damp at one end, whereas at the other end arbitrary conservative or dissipative boundary conditions can be imposed.
 
  \begin{theorem}[Villegas et al.\ (2009)]
   Let $A$ be a port-Hamiltonian operator closed with a dissipative boundary condition as in Theorem \ref{thm:generation_thm_static}.
   Further assume that the order of the port-Hamiltonian system is $N = 1$, the Hamiltonian density matrix function $\H: [0,1] \rightarrow \K^{d \times d}$ is Lipschitz continuous and one has the following estimate:
    \begin{equation}
     \Re \sp{A x}{x}_X
      \leq - \kappa \abs{(\H x)(0)}^2,
      \quad
      x \in \dom(A)
      \nonumber
    \end{equation}
   where $\kappa > 0$ does not depend on $x \in \dom(A)$.
   Then, the $C_0$-semigroup $(T(t))_{t \geq 0}$ generated by $A$ is uniformly exponentially stable.
  \end{theorem}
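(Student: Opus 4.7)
The plan is to apply the Gearhart-Pr\"uss-Huang theorem via the sequential reformulation in Remark \ref{rem:GPH}. Theorem \ref{thm:generation_thm_static} together with the dissipation estimate already gives $\sigma(A) \subseteq \overline{\C_0^-}$ and boundedness of $(T(t))_{t \geq 0}$, so two further points have to be checked: first, $\ii \R \subseteq \rho(A)$, and second, the asymptotic sequence criterion on the imaginary axis.

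For the first point, suppose $A x = \ii \beta x$ with $\beta \in \R$ and $x \in \dom(A)$. Pairing with $x$ and taking real parts gives $0 = \Re \sp{A x}{x}_X \leq -\kappa \abs{(\H x)(0)}^2$, so $(\H x)(0) = 0$. Setting $e := \H x$, the eigenvalue relation reduces to the linear ODE $P_1 e' = \ii\beta \H^{-1} e - P_0 e$ on $(0,1)$ with Cauchy datum $e(0) = 0$; invertibility of $P_1$ combined with Lipschitz continuity of $\H$ and essential boundedness of $P_0$ allows Picard-Lindel\"of to conclude $e \equiv 0$, and hence $x = 0$. Surjectivity of $\ii\beta - A$ then follows from compactness of the resolvent (via $\dom(A) \hookrightarrow H^1(0,1;\K^d) \hookrightarrow L_2(0,1;\K^d)$ and the Rellich embedding) together with Fredholm theory.

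For the second point, I would argue by contradiction. Suppose $(x_n, \beta_n) \in \dom(A) \times \R$ satisfies $\norm{x_n}_X = 1$, $\abs{\beta_n} \to \infty$, and $f_n := A x_n - \ii\beta_n x_n \to 0$ in $X$. Pairing with $x_n$ and taking real parts yields $\Re \sp{A x_n}{x_n}_X \to 0$, and the hypothesis then forces $\abs{(\H x_n)(0)} \to 0$. The task reduces to proving $\norm{x_n}_X \to 0$, which will contradict $\norm{x_n}_X = 1$.

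The main obstacle is this last propagation step, since $\beta_n$ appears as a large parameter in the resolvent equation
\[
 e_n'(\z) = \ii\beta_n P_1^{-1} \H(\z)^{-1} e_n(\z) - P_1^{-1} P_0(\z) e_n(\z) - P_1^{-1} f_n(\z),
 \qquad
 e_n := \H x_n.
\]
The strategy is a multiplier (or sideways energy) estimate: test the equation against $M(\z) e_n(\z)$ with a matrix weight $M \in W_\infty^1(0,1;\K^{d \times d})$ chosen so that the Hermitian part of $M P_1^{-1} \H^{-1}$ vanishes (exploiting $P_1^* = P_1$ and $\H^* = \H$), making the $\beta_n$-contribution purely imaginary and hence disappear after taking real parts. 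Integrating the identity $\frac{\dd}{\dd \z}\sp{e_n}{M e_n}_{\K^d}$ over $(0,1)$ leaves an estimate of the form
\[
 \norm{e_n}_X^2
  \leq C \bigl( \abs{e_n(0)}^2 + \abs{e_n(1)}^2 + \norm{f_n}_X \norm{e_n}_X \bigr).
\]
Decomposing $e_n$ along the positive and negative spectral subspaces of $P_1$ and using the complementary weights of type $\z P_1$ and $(1-\z) P_1$ on the two pieces makes the volume term positive definite; Lipschitz continuity of $\H$ ensures that $M$ has sufficient regularity for the integration by parts. The boundary term at $\z = 0$ is $o(1)$ by the preceding step, and the one at $\z = 1$ is controlled through the dissipative closure relation $W_B \tau(e_n) = K W_C \tau(e_n)$ together with the ODE transport from $\z = 0$ to $\z = 1$, which keeps $\abs{e_n(1)}$ of order $\abs{e_n(0)} + o(1)$ uniformly in $\beta_n$. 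Coercivity of $\H$ finally turns $\norm{e_n}_X \to 0$ into $\norm{x_n}_X \to 0$, the desired contradiction.
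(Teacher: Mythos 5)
Your plan is sound, but it takes a genuinely different route from the one the paper relies on: the theorem is attributed to \cite{VillegasEtAl_2009}, whose original argument is the time-domain \emph{sideways-energy} (final observability) estimate $\norm{T(\tau)x_0}_X \leq c\,\norm{[T(\cdot)x_0](0)}_{L_2(0,\tau;\K^d)}$, from which uniform exponential decay follows by the standard energy-drop argument; the paper only records this citation (plus the remark that \cite{AugnerJacob_2014} relaxes $\H \in C^1$ to $\H$ Lipschitz) and then lists your frequency-domain route -- Gearhart--Pr\"uss--Huang combined with a multiplier estimate on the resolvent equation -- as the second of three known approaches. The observability approach buys robustness under nonlinear boundary feedback; your spectral approach buys compatibility with dynamic and interconnected closures, which is exactly why the later sections of this paper adopt it. Your treatment of $\ii\R \subseteq \rho(A)$ (injectivity via the ODE with Cauchy datum $(\H x)(0)=0$, then surjectivity via compactness of the resolvent) is precisely what underlies property ASP in Lemma \ref{lem:ASP_AIEP_N=1}.

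Three details of your multiplier step need correction or care. First, the algebraic condition on the weight has the wrong parity: testing $e_n' = \ii\beta_n P_1^{-1}\H^{-1}e_n - \dots$ against $M e_n$ produces the term $\ii\beta_n \sp{e_n}{\H^{-1}P_1^{-1}M e_n}$, which is purely imaginary precisely when $\H^{-1}P_1^{-1}M$ is \emph{Hermitian} -- so it is the skew part, not the Hermitian part, that must vanish; the natural choice $M = q P_1$ with $q$ a real scalar function achieves this, since then $\H^{-1}P_1^{-1}M = q\H^{-1}$. Second, the decomposition should be along the spectral subspaces of $P_1\H(\z)$ (equivalently of $\H^{1/2}P_1\H^{1/2}$), not of $P_1$ alone: these are the subspaces that are approximately invariant under the transport, they vary Lipschitz-continuously in $\z$, and controlling the cross terms between them is where the Lipschitz hypothesis on $\H$ is actually consumed. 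Third, the claim that the transport from $\z=0$ to $\z=1$ keeps $\abs{e_n(1)} \lesssim \abs{e_n(0)} + o(1)$ \emph{uniformly in} $\beta_n$ is not free -- it is itself the crux, and it holds only because $\ii\beta_n P_1^{-1}\H^{-1}(\z)$ is pointwise similar to a skew-adjoint matrix, which again requires the same diagonalisation. None of this invalidates the plan, but as written the sketch assumes the uniform-in-$\beta_n$ Gronwall bound that it is supposed to establish.
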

  
  \begin{proof}
  For the proof, see \cite{VillegasEtAl_2009}, where $\H \in C^1([0,1];\K^{d \times d})$ had been assumed.
  However, the proof carries over to $\H \in \Lip([0,1];\K^{d \times d})$, see \cite{AugnerJacob_2014}.
  \end{proof}
 
 Actually, up to now there are at least three approaches known to prove the stability result above:
  \begin{enumerate}
   \item
    The original proof in \cite{VillegasEtAl_2009} is based on some \emph{sideways-energy estimate} (as it is called in \cite{CoxZuazua_1995}) or \emph{final observability estimate}
     \[
      \norm{T(\tau) x_0}_X
       \leq c \norm{[T(\cdot)x](0)}_{L_2(0,\tau;\K^d)},
       \quad
       x_0 \in X 
     \]
    for some sufficiently large $\tau > 0$ and some $c > 0$.
    This approach is very helpful when considering non-linear dissipative boundary feedback, cf.\ \cite{Augner_2018+}, however it seems difficult to extend this result to higher order port-Hamiltonian systems, e.g.\ Euler-Bernoulli type systems.
   \item
    A frequency domain approach, namely showing that the resolvent $(\ii \beta - A)^{-1} \in \B(X)$ exists for all $\beta \in \R$ and is uniformly bounded, employing the Arendt-Batty-Lyubich-V\~u Theorem (asymptotic stability) and the Gearhart-Greiner-Pr\"uss-Huang Theorem (uniform exponential stability) has been applied in \cite{AugnerJacob_2014}.
    This approach is suitable for interconnection with finite dimensional control systems \cite{AugnerJacob_2014}, and as we later see, for linear interconnection with other port-Hamiltonian systems; see the Section \ref{sec:hybrid_multi_phs}.
   \item
    Third, a multiplier approach leading to a Lyapunov function is possible as well.
    Again, this approach is suitable for non-linear feedback interconnection, especially of dynamic type \cite{Augner_2018+}.
  \end{enumerate}
 Strictly speaking, there also is a fourth approach (actually the oldest one!), but it only works under much stronger regularity assumptions, namely analyticity of $\H$; see \cite{RauchTaylor_1974}.

 The second example for a class of closure relations consists of dissipative or conservative feedback interconnection with a linear control system. 

 \begin{example}[Interconnection of a PHS with a finite-dimensional controller]
  Let $\mathfrak{S} = (\mathfrak{A}, \mathfrak{B}, \mathfrak{C})$ be an impedance passive port-Hamiltonian system and $\Sigma_c = (A_c, B_c, C_c, D_c) \in \B(X_c) \times \B(U_c; X_c) \times \B(X_c; Y_c) \times \B(U_c; Y_c)$, for some finite-dimensional Hilbert spaces $X_c$, $U_c$ and $Y_c$, be a finite dimensional control system
   \[
    \begin{cases}
     \frac{\dd}{\dd t} x_c(t)
      &= A_c x_c(t) + B_c u_c(t)
      \\
     y_c(t)
      &= C_c x_c(t) + D_c u_c(t),
      \quad
      t \geq 0
    \end{cases}
   \]
  and impedance passive, i.e.\ $U_c = Y_c$ and
   \[
    \Re \sp{A_c x_c + B_c u_c}{x_c}_{X_c}
     \leq \Re \sp{u_c}{C_c x_c + D_c u_c}_{U_c},
     \quad
     x_c \in X_c, u_c \in U_c.
   \]
  Further assume that $U_c = Y_c = \K^{Nd}$.
  Then $\hat A: \dom(\hat A) \subseteq \hat X \rightarrow \hat X$,
   \begin{align*}
    \hat A(x,x_c)
     &:= (\mathfrak{A} x, A_c x_c + B_c \mathfrak{C} x)
     \\
    \dom(\hat A)
     &:= \{ (x,x_c) \in \dom(\mathfrak{A}) \times X_c: \, \mathfrak{B} x = - C_c x_c - D_c \mathfrak{C} x \}
   \end{align*}
  resulting from the standard feedback interconnection
   \[
    u_c = \mathfrak{C} x
     \quad \text{and} \quad
    \mathfrak{B} x = - y_c
   \]
  is a dissipative operator on the product Hilbert space $\hat X = X \times X_c$, and thus generates a strongly continuous contraction semigroup on $\hat X$.
 \end{example}
 
 \begin{proof}
 Dissipativity follows from impedance passivity of both subsystems and some easy computation.
 For the assertion on semigroup generation, we need the following result.
 \end{proof}

 \begin{theorem}[Villegas (2007), Augner, Jacob (2014)]
  Let $\mathfrak{S} = (\mathfrak{A}, \mathfrak{B}, \mathfrak{C})$ be an infinite-dimensional port-Hamiltonian system and $\Sigma_c = (A_c, B_c, C_c, D_c)$ be a finite dimensional linear control system.
  The operator
   \begin{align*}
    \hat A (x,x_c)
     &:= (\mathfrak{A} x, A_c x_c + B_c \mathfrak{C} x) \\
    \dom (\hat A)
     &:= \{ (x,x_c) \in \dom(\mathfrak{A}) \times X_c: \, \mathfrak{B} x = - C_c x_c - D_c \mathfrak{C} x \}
   \end{align*}
  generates a contractive $C_0$-semigroup \ $(\hat T(t))_{t \geq 0}$ on the product Hilbert space $\hat X = X \times X_c$ if and only if it is dissipative.
 \end{theorem}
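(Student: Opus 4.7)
The \emph{only if} direction is immediate from Lumer--Phillips: any generator of a contraction semigroup on a Hilbert space is dissipative. For the \emph{if} direction, my plan is to apply the Lumer--Phillips theorem on $\hat X = X \times X_c$, i.e., to verify that $\hat A$ is densely defined, closed, and satisfies $\ran(\lambda - \hat A) = \hat X$ for some $\lambda > 0$.

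For density and closedness, I would argue as follows. Invertibility of $\bigl[\begin{smallmatrix} W_B \\ W_C \end{smallmatrix}\bigr]$ together with surjectivity of the trace map $x \mapsto \tau(\H x)$ on $\dom(\mathfrak{A})$ means that, for any prescribed $y_c \in X_c$, one can produce some $x^{\ast} \in \dom(\mathfrak{A})$ with $\mathfrak{B} x^{\ast} + D_c \mathfrak{C} x^{\ast} = -C_c y_c$. Given $(y, y_c) \in \hat X$, approximating $y - x^{\ast}$ in $X$ by $z_n \in C_c^\infty(0,1;\K^d)$ (which satisfy $\mathfrak{B} z_n = \mathfrak{C} z_n = 0$), the sequence $(x^{\ast} + z_n, y_c) \in \dom(\hat A)$ converges to $(y, y_c)$. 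Closedness then follows from closedness of $\mathfrak{A}$, continuity of $\mathfrak{B}, \mathfrak{C}$ in the graph norm of $\mathfrak{A}$, and boundedness of $A_c, B_c, C_c, D_c$ on the finite-dimensional space $X_c$.

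For the range condition I would reduce to a port-Hamiltonian boundary value problem. Pick $\lambda > 0$ with $\lambda \in \rho(A_c)$; this is possible since $A_c$ acts on a finite-dimensional space. Given $(f, f_c) \in \hat X$, the second component of $(\lambda - \hat A)(x, x_c) = (f, f_c)$ determines $x_c = (\lambda - A_c)^{-1}(B_c \mathfrak{C} x + f_c)$, and substituting into the boundary condition reduces the problem to finding $x \in \dom(\mathfrak{A})$ with
\[
  \lambda x - \mathfrak{A} x = f,
  \qquad
  \mathfrak{B} x - K_\lambda \mathfrak{C} x = g_\lambda,
\]
where $K_\lambda := -C_c(\lambda - A_c)^{-1} B_c - D_c$ and $g_\lambda := -C_c(\lambda - A_c)^{-1} f_c$. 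Since $P_N$ is invertible, $\lambda x - \mathfrak{A} x = f$ is a linear first-order ODE system of size $Nd$ for $\tau(\H x)$, whose solutions form an affine $Nd$-parameter family; the boundary relation then becomes an $Nd \times Nd$ linear system in the free parameters, which the factorisation
\[
  \begin{bmatrix} W_B - K_\lambda W_C \\ W_C \end{bmatrix}
   =
  \begin{bmatrix} I & -K_\lambda \\ 0 & I \end{bmatrix}
  \begin{bmatrix} W_B \\ W_C \end{bmatrix}
\]
exhibits as being of full rank.

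The main obstacle, which I would handle by mimicking the range-condition argument of \cite{LeGorrecZwartMaschke_2005}, is the genuine solvability of this two-point boundary value problem: Theorem \ref{thm:generation_thm_static} cannot be invoked verbatim because $K_\lambda$ depends on $\lambda$. The saving grace is that $K_\lambda \to -D_c$ as $\lambda \to \infty$, so the $\lambda$-dependence is a uniformly bounded perturbation, and an asymptotic analysis of the fundamental matrix of the ODE system for large $\lambda$ — exactly as in \cite{LeGorrecZwartMaschke_2005} — yields solvability for $\lambda$ sufficiently large and thereby the desired range property.
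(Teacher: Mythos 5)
Your setup — Lumer--Phillips on $\hat X$, density via a corrector $x^\ast$ with prescribed trace plus $C_c^\infty$ perturbations, elimination of $x_c$ through $(\lambda-A_c)^{-1}$, and reduction to the static boundary value problem with $K_\lambda = -C_c(\lambda-A_c)^{-1}B_c - D_c$ — is exactly the route the paper takes (see the proof of Theorem \ref{thm:generation_network-dynamic}, of which this statement is the case $m=m_c=1$). The gap is in how you close the argument. First, a small but real non sequitur: invertibility of the $2Nd\times 2Nd$ matrix $\left[\begin{smallmatrix} W_B - K_\lambda W_C \\ W_C \end{smallmatrix}\right]$ says nothing about the solvability of the two-point problem, because the relevant $Nd\times Nd$ system is $(W_B - K_\lambda W_C)\,T_\lambda\, \hat h = \text{(data)}$, where $T_\lambda$ encodes the trace of the fundamental solution of $(\lambda-\mathfrak{A})x=f$; this composite matrix can perfectly well be singular for non-dissipative closure relations (then $\lambda$ is an eigenvalue). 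You acknowledge this by calling solvability ``the main obstacle'', but the resolution you propose does not work: the asymptotic argument for large $\lambda$ uses dissipativity of $\hat A$ nowhere, and dissipativity is indispensable — it is precisely what rules out positive eigenvalues. Moreover, the perturbation step ``$K_\lambda \to -D_c$, hence solvability transfers'' requires a bound of the form $\norm{((W_B+D_cW_C)T_\lambda)^{-1}}\cdot\norm{W_C T_\lambda}\cdot\norm{K_\lambda+D_c} < 1$; the last factor is $O(1/\lambda)$, but the first two involve fundamental matrices whose condition numbers grow exponentially in $\lambda^{1/N}$, so the product need not be small. This is also not what \cite{LeGorrecZwartMaschke_2005} does.

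The missing idea is that Theorem \ref{thm:generation_thm_static} (equivalently Proposition \ref{prop:gen_static_feedback}) \emph{can} be invoked despite the $\lambda$-dependence of $K_\lambda$: fix one $\lambda_0>0$ with $\lambda_0\in\rho(A_c)$ and regard $K_{\lambda_0}$ as a \emph{fixed} feedback matrix. The operator $A_{\mathrm{cl}} := \mathfrak{A}|_{\ker(\mathfrak{B}-K_{\lambda_0}\mathfrak{C})}$ is dissipative: for $x\in\dom(A_{\mathrm{cl}})$ set $x_c := (\lambda_0-A_c)^{-1}B_c\mathfrak{C}x$, so that $(x,x_c)\in\dom(\hat A)$ and
\[
 \Re\sp{A_{\mathrm{cl}}x}{x}_X
  = \Re\sp{\hat A(x,x_c)}{(x,x_c)}_{\hat X} - \Re\sp{A_cx_c+B_c\mathfrak{C}x}{x_c}_{X_c}
  \leq -\lambda_0\norm{x_c}_{X_c}^2
  \leq 0 .
\]
The static generation theorem then yields that $A_{\mathrm{cl}}$ generates a contraction semigroup, hence $(0,\infty)\subseteq\rho(A_{\mathrm{cl}})$ and in particular $\lambda_0\in\rho(A_{\mathrm{cl}})$, which is exactly the solvability of your boundary value problem at the chosen $\lambda_0$ (after subtracting $B_{\mathrm{cl}}\,g_{\lambda_0}$ for a right inverse $B_{\mathrm{cl}}$ of the boundary map to homogenise the boundary condition). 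This is where the dissipativity hypothesis on $\hat A$ actually enters the range condition; without the displayed transfer of dissipativity to $A_{\mathrm{cl}}$, your proof does not go through.
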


 \begin{proof}
 A result like this has probably first been stated in the Ph.D. thesis \cite{Villegas_2007}, however under some slightly more restrictive conditions on the infinite-dimensional port-Hamiltonian system and the finite dimensional linear control system $\Sigma_c$.
 For the general situation stated above, see \cite{AugnerJacob_2014}.
 \end{proof}
 
 As for the static feedback case, the generation theorem is based on the Lumer-Phillips Theorem which states that besides dissipativity of an operator a range condition, namely $\ran (\hat A - \lambda I) = \hat X$ for some (then: all) $\lambda > 0$ is sufficient (and necessary as well) for the operator $\hat X$ to generate a strongly continuous contraction semigroup.
 Here, the range condition for $\hat A$ is reduced to a range condition for some operator $A_\mathrm{cl}$ (with suitable static linear closure relations), i.e.\ the generation theorem for the dynamic case already relies on (the proof of) the generation theorem for the static case.
 
 As for the static case, one can ask for sufficient (hopefully $\H$-independent) conditions on the damping via the controller such that the hybrid PDE-ODE systems is uniformly exponentially stable, i.e.\ its total energy decays uniformly exponentially to zero for all initial data $(x_0, x_{c,0})$.
 
 \begin{theorem}[Ramirez, Zwart, Le Gorrec (2013), Augner, Jacob (2014)]
  Assume that $\mathfrak{S}$ is an impedance passive port-Hamiltonian system of order $N = 1$ and such that $\H: [0,1] \rightarrow \K^{d \times d}$ is Lip\-schitz continuous, and let $\mathfrak{S}$ be interconnected by standard feedback interconnection
   \[
    \mathfrak{B} x = - y_c,
     \quad
     u_c = \mathfrak{C} x
   \]  
  with a strictly impedance passive finite-dimensional control system $\Sigma_c = (A_c, B_c, C_c, D_c)$ in the sense that
   \[
    \Re \sp{A_c x_c + B_c u_c}{x_c}_{X_c}
     \leq \Re \sp{C_c x_c + D_c u_c}{u_c}_{U_c} - \kappa \abs{D_c u_c}^2,
     \quad
     x_c \in X_c, \, u_c \in U_c
   \]
  and such that $(\ee^{t A_c})_{t \geq 0}$ is uniformly exponentially stable, $\ker D_c \subseteq \ker B_c$ and
   \[
    \abs{\mathfrak{B} x}^2 + \abs{D_c \mathfrak{C} x}^2
     \gtrsim \abs{(\H x)(0)}^2,
     \quad
     x \in \dom(\mathfrak{A}),
   \]
  then the $C_0$-semigroup $(\hat T(t))_{t \geq 0}$ is uniformly exponentially stable.
 \end{theorem}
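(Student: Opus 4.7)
The plan is to apply the Gearhart-Pr\"uss-Huang theorem in the sequential form given by Remark \ref{rem:GPH}. Two things have to be checked: (a) $\ii \R \subseteq \rho(\hat A)$, and (b) for every sequence $(\hat x_n, \beta_n)_{n \ge 1} \subseteq \dom(\hat A) \times \R$ with $\sup_n \norm{\hat x_n}_{\hat X} < \infty$, $\abs{\beta_n} \to \infty$, and $\hat A \hat x_n - \ii \beta_n \hat x_n \to 0$ in $\hat X$, one has $\hat x_n \to 0$. Writing $\hat x_n = (x_n, x_{c,n})$, my strategy is to first extract damping information from the strictly passive controller, then translate it into a boundary estimate on the PDE component, and finally propagate that boundary information into the interior.

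First I would take the inner product of $(\hat A - \ii \beta_n) \hat x_n$ with $\hat x_n$. Combining the impedance passivity of $\mathfrak{S}$, the strict impedance passivity of $\Sigma_c$, and the feedback law $\mathfrak{B} x = -C_c x_c - D_c \mathfrak{C} x$, the cross-terms cancel and one arrives at
\[
 \Re \sp{\hat A \hat x_n}{\hat x_n}_{\hat X}
  \leq - \kappa \abs{D_c \mathfrak{C} x_n}^2.
\]
Since $\Re \sp{\ii \beta_n \hat x_n}{\hat x_n}_{\hat X} = 0$, this forces $D_c \mathfrak{C} x_n \to 0$ in $\K^{Nd}$. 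In finite dimension, $\ker D_c \subseteq \ker B_c$ permits a factorisation $B_c = \tilde B_c D_c$, hence $B_c \mathfrak{C} x_n \to 0$ as well. The controller component of the resolvent equation then reads $(\ii \beta_n - A_c) x_{c,n} = B_c \mathfrak{C} x_n + o(1)$, and since $\sigma(A_c) \subseteq \C_0^-$ by exponential stability of $(\ee^{tA_c})_{t \ge 0}$, the finite-dimensional resolvents $(\ii \beta - A_c)^{-1}$ are uniformly bounded on $\R$. Consequently $x_{c,n} \to 0$ in $X_c$, and the feedback law delivers $\mathfrak{B} x_n = -C_c x_{c,n} - D_c \mathfrak{C} x_n \to 0$. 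The coercivity assumption
\[
 \abs{\mathfrak{B} x_n}^2 + \abs{D_c \mathfrak{C} x_n}^2 \gtrsim \abs{(\H x_n)(0)}^2
\]
then yields $(\H x_n)(0) \to 0$ in $\K^d$.

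This reduces the problem to the single-system first-order case: the PDE component satisfies
\[
 P_1 (\H x_n)' + P_0 \H x_n - \ii \beta_n x_n \to 0 \quad \text{in } L_2(0,1;\K^d)
\]
with vanishing trace at $\z = 0$. Invertibility of $P_1$, Lipschitz continuity of $\H$, and a sideways-energy/Gronwall estimate along the characteristics, exactly as in the proof of the single-system theorem of \cite{VillegasEtAl_2009, AugnerJacob_2014}, then deliver $x_n \to 0$ in $X$. I expect this propagation step to be the hard part: even with $(\H x_n)(0) \to 0$, the large term $\ii \beta_n x_n$ on the right-hand side of the ODE forbids a naive Gronwall argument, and this is exactly the point where the first-order structure and the Lipschitz regularity of $\H$ enter critically. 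Finally, for (a) I would note that $\hat A$ has compact resolvent (compactness of $H^1 \hookrightarrow L_2$ on $(0,1)$ together with finite-dimensionality of $X_c$), so $\sigma(\hat A) = \sigma_p(\hat A)$, and then rule out imaginary eigenvalues by running the same chain of implications above with all error terms equal to zero. Gearhart-Pr\"uss-Huang then yields uniform exponential stability.
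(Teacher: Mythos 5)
Your argument is correct, and it is worth noting that the paper does not actually prove this theorem: it recalls it from the cited references and only sketches a heuristic, namely treating $x_c$ as a perturbation of the static closure $\mathfrak{B} x = - D_c \mathfrak{C} x$ and combining the static-feedback stability theorem with the exponential stability of $(\ee^{t A_c})_{t \geq 0}$. Your route is instead the frequency-domain one: Gearhart-Pr\"uss-Huang in the sequential form of Remark \ref{rem:GPH}, with the chain $D_c \mathfrak{C} x_n \rightarrow 0 \Rightarrow B_c \mathfrak{C} x_n \rightarrow 0 \Rightarrow x_{c,n} \rightarrow 0 \Rightarrow \mathfrak{B} x_n \rightarrow 0 \Rightarrow (\H x_n)(0) \rightarrow 0$, followed by the single-system interior estimate. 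This is precisely the structure the paper itself uses later for its generalisation (the proof of Theorem \ref{thm:stability} combined with Lemma \ref{lem:ASP_AIEP_N=1}), so your proof is a correct specialisation of that machinery; what it buys over the perturbation sketch is that one never has to compare $(\hat T(t))_{t \geq 0}$ with the semigroup of the statically closed system. Three small remarks. First, the interior step ``$(\mathfrak{A} - \ii \beta_n) x_n \rightarrow 0$ and $(\H x_n)(0) \rightarrow 0$ imply $x_n \rightarrow 0$'' (property AIEP) is established in the literature by a multiplier argument with multiplier $q \H x_n$ rather than by a characteristics/Gronwall estimate, but invoking it as the known first-order result is legitimate here. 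Second, in your spectral step the exclusion of imaginary eigenvalues ends not with AIEP (which requires $\abs{\beta_n} \rightarrow \infty$) but with the unique-continuation property ASP, i.e.\ $\mathfrak{A} x = \ii \beta x$ and $(\H x)(0) = 0$ force $x = 0$; this is Lemma \ref{lem:ASP_AIEP_N=1}(1) and holds under the stated Lipschitz assumption, so the step is fine but deserves to be named. Third, the factorisation $B_c = \tilde B_c D_c$ from $\ker D_c \subseteq \ker B_c$ is the standard finite-dimensional fact and is used in exactly this way in the paper's Assumption \ref{assmpt:hybrid}.
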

 
 \begin{proof}
 For the situation where $\Sigma_c$ is strictly input passive, in particular $D_c > 0$ is positive definite, see \cite{RamirezZwartLeGorrec_2013}.
 The (slightly) generalised result can be found in \cite{AugnerJacob_2014}.
 \end{proof}
 
 The general idea for the proof of this dynamic feedback result is to consider the state variable $x_c$ as a perturbation to the static boundary feedback one would have for $x_c = 0$, namely $\mathfrak{B} x = - D_c \mathfrak{C} x$.
 From the impedance passivity of $\mathfrak{S}$ and $\Sigma_c$ one then obtains a dissipation estimate of the type
  \[
   \Re \sp{\mathfrak{A} x}{x}_X
    \leq - \kappa \left( \abs{\mathfrak{B} x}_U^2 + \abs{D_c\mathfrak{C} x}_U^2  \right)
    \lesssim - \kappa \abs{(\H x)(0)}^2,
    \quad
    x \in \ker (\mathfrak{B} + D_c \mathfrak{C}).
  \]
 Uniform exponential energy decay then can be expected from the static feedback result and the exponential stability of $(\ee^{t A_c})_{t \geq 0}$ ensures that the perturbation does not hurt this property.
 
 Besides dynamic feedback, the other direction of generalisation aims at higher order port-Hamiltonian systems.
 The first result in this perspective follows rather easily from the Arendt-Batty-Lyubich-V\~u Theorem and considerations on possible eigenfunctions with eigenvalues $\ii \beta$ for some $\beta \in \R$, but only gives \emph{asymptotic stability}.

  \begin{proposition}[Augner, Jacob (2014)]
   Let $A$ be a port-Hamiltonian operator of order $N \in \N$, resulting from linear closure of a port-Hamiltonian system $\mathfrak{S} = (\mathfrak{A}, \mathfrak{B}, \mathfrak{C})$ by a linear closure relation $\mathfrak{B} x = K \mathfrak{C} x$ and assume that $\H: [0,1] \rightarrow \K^{d \times d}$ is Lipschitz continuous, and
    \begin{equation}
     \Re \sp{A x}{x}_X
      \leq - \kappa \sum_{k=0}^{N-1} \abs{(\H x)^{(k)}(0)}^2,
      \quad
      x \in \dom(A)
      \nonumber
    \end{equation}
   for some $\kappa > 0$.
   Then, the $C_0$-semigroup $(T(t))_{t \geq 0}$ generated by $A$ is (asymptotically) strongly stable.
  \end{proposition}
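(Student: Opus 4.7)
The plan is to invoke the Arendt-Batty-Lyubich-V\~u theorem. The dissipation hypothesis immediately gives $\Re \sp{A x}{x}_X \leq 0$ for all $x \in \dom(A)$, so by Theorem \ref{thm:generation_thm_static} the operator $A$ already generates a contraction (hence bounded) $C_0$-semigroup on $X$. It therefore remains to verify two properties: that $A$ has compact resolvent and that its point spectrum does not intersect the imaginary axis.

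For the compact resolvent, I would show that the embedding $(\dom(A), \norm{\cdot}_A) \hookrightarrow X$ is compact. Given a sequence $(x_n)_{n\geq 1} \subseteq \dom(A)$ bounded in the graph norm, set $y_n := \H x_n$, which is bounded in $L_2(0,1;\K^d)$. The defining identity $P_N y_n^{(N)} = \mathfrak{A} x_n - \sum_{k=0}^{N-1} P_k y_n^{(k)}$, together with invertibility of $P_N$ and the norm equivalence $\norm{y}_{H^N} \sim \norm{y}_{L_2} + \norm{y^{(N)}}_{L_2}$ on $(0,1)$, allows a bootstrap argument showing that $(y_n)$ is bounded in $H^N(0,1;\K^d)$. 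Rellich-Kondrachov then yields a convergent subsequence of $(y_n)$ in $L_2$, and multiplication by the bounded operator $\H^{-1}$ gives a convergent subsequence of $(x_n)$ in $X$.

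For the absence of imaginary eigenvalues, suppose $A x = \ii \beta x$ for some $\beta \in \R$ and $x \in \dom(A)$. Taking the real part of $\sp{A x}{x}_X$ and using the dissipation estimate gives
\[
 0 = \Re \sp{A x}{x}_X \leq - \kappa \sum_{k=0}^{N-1} \abs{(\H x)^{(k)}(0)}^2,
\]
so that $(\H x)^{(k)}(0) = 0$ for $k = 0, 1, \ldots, N-1$. Writing $y := \H x \in H^N(0,1;\K^d)$, the eigenvalue equation becomes
\[
 P_N y^{(N)} = \ii \beta \, \H^{-1} y - \sum_{k=0}^{N-1} P_k y^{(k)},
\]
which is equivalent to a linear first-order ODE system for $Y := (y, y', \ldots, y^{(N-1)}) \in \K^{Nd}$ with bounded measurable coefficients (since $\H$ is Lipschitz, $\H^{-1} \in L_\infty$). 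Carathéodory uniqueness then forces $Y \equiv 0$ from $Y(0) = 0$, hence $x \equiv 0$, contradicting the assumption that $x$ is an eigenvector. Thus $\sigma_p(A) \subseteq \C_0^-$, and strong stability follows from Arendt-Batty-Lyubich-V\~u.

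The main technical step is the bootstrap in the compact-resolvent argument: one must carefully use the invertibility of $P_N$ to promote the graph bound on $x_n$ into an $H^N$-bound on $\H x_n$ before Rellich can be applied. The eigenvalue step is essentially unavoidable — it is precisely the role of the boundary dissipation estimate to kill all Cauchy data at $\z = 0$ so that linear ODE uniqueness takes over — and the remaining pieces are standard.
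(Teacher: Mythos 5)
Your proof is correct and follows exactly the route the paper intends for this result: the paper itself only cites \cite{AugnerJacob_2014}, but the surrounding text states that the proposition ``follows rather easily from the Arendt-Batty-Lyubich-V\~u Theorem and considerations on possible eigenfunctions with eigenvalues $\ii \beta$'', which is precisely your argument (compact resolvent via interpolation plus Rellich--Kondrachov, the dissipation estimate annihilating the Cauchy data of $\H x$ at $\z = 0$, and then linear ODE uniqueness). No gaps; as a minor remark, your eigenvalue step does not actually use the Lipschitz continuity of $\H$, since coercivity and boundedness already give $\H^{-1} \in L_\infty(0,1;\K^{d \times d})$.
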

  \begin{proof}
  See \cite{AugnerJacob_2014}.
  \end{proof}
  
  In \cite{AugnerJacob_2014}, it has also been shown that generally one cannot expect uniform exponential stability, namely there is a counter example (Schr\"odinger equation) where full dissipation at one end and a correct choice of conservative boundary conditions at the other end only lead to asymptotic stability, but not to uniform exponential stability.
  (For the counter example, one can compute the resolvents $(\ii \beta - A)^{-1}$ and show that they are not uniformly bounded for $\beta \in \R$.)
  However, under further conditions on the boundary conditions at the conservative end, more can be said:
  
  \begin{theorem}[Augner, Jacob (2014)]
   Let $A$ be a (closed by linear static feedback) port-Hamiltonian operator $A$ of order $N = 2$ and $\H \in \Lip([0,1];\K^{d \times d})$ be Lipschitz continuous and assume that
    \begin{equation}
     \Re \sp{A x}{x}_X
      \leq - \kappa \left( \abs{(\H x)(0)}^2 + \abs{(\H x)'(0)}^2 + \abs{(\H x)(1)}^2 \right)
      \nonumber
    \end{equation}
   for all $x \in \dom(A)$.
   Then the $C_0$-semigroup $(T(t))_{t \geq 0}$ generated by $A$ is uniformly exponentially stable.
  \end{theorem}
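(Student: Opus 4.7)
Proof plan: I apply the Gearhart-Pr\"uss-Huang theorem in its sequential form (Remark \ref{rem:GPH}). Since $A$ already generates a contraction semigroup by Theorem \ref{thm:generation_thm_static}, it remains to verify the two spectral conditions $\ii\R \subseteq \rho(A)$ and $\sup_{\beta \in \R} \norm{(\ii\beta - A)^{-1}} < \infty$; both hinge on the same propagation of boundary estimates into the interior.

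For the spectral inclusion, I first observe that the identity $P_2 (\H x)'' = Ax - P_1 (\H x)' - P_0 (\H x)$ combined with invertibility of $P_2$ shows that $\norm{x}_X + \norm{Ax}_X$ controls $\norm{\H x}_{H^2(0,1;\K^d)}$, so $\dom(A) \hookrightarrow X$ is compact and $\sigma(A) = \sigma_p(A)$. If $Ax = \ii\beta x$ for some real $\beta$ and nonzero $x \in \dom(A)$, then $\Re \sp{Ax}{x}_X = 0$ and the dissipation estimate forces $(\H x)(0) = (\H x)'(0) = (\H x)(1) = 0$. Setting $y := \H x$, the equation $P_2 y'' + P_1 y' + P_0 y = \ii\beta \H^{-1} y$ is a linear second-order ODE with invertible leading coefficient and vanishing Cauchy data at $\z = 0$, so $y \equiv 0$ by uniqueness, contradicting $x \ne 0$. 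Thus $\ii\R \cap \sigma_p(A) = \emptyset$.

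For the uniform resolvent bound, take a sequence $(x_n, \beta_n) \subseteq \dom(A) \times \R$ with $\sup_n \norm{x_n}_X < \infty$, $\abs{\beta_n} \to \infty$ and $f_n := Ax_n - \ii\beta_n x_n \to 0$ in $X$; one needs $x_n \to 0$. Pairing with $x_n$ and taking real parts cancels the $\ii\beta_n$-term, and the dissipation estimate yields
\[
\kappa \bigl( \abs{(\H x_n)(0)}^2 + \abs{(\H x_n)'(0)}^2 + \abs{(\H x_n)(1)}^2 \bigr) \le \abs{\sp{f_n}{x_n}_X} \to 0,
\]
so $(\H x_n)(0), (\H x_n)'(0), (\H x_n)(1) \to 0$ in $\K^d$. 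Setting $y_n := \H x_n$, the resolvent equation becomes the second-order ODE $P_2 y_n'' + P_1 y_n' + P_0 y_n - \ii\beta_n \H^{-1} y_n = f_n$, which I intend to recast as a first-order system for the $2d$-dimensional state $Y_n := (y_n, y_n')^T$. The goal is then to construct a Hermitian quadratic form $e_n(\z)$ on $Y_n$ whose coefficients are chosen, via the anti-/symmetry relations $P_2^* = -P_2$ and $P_1^* = P_1$, so that the leading (in $\beta_n$) contributions to $e_n'(\z)$ cancel, leaving a $\beta_n$-independent inequality $e_n'(\z) \lesssim e_n(\z) + \norm{f_n}_X^2 + \text{l.o.t.}$ From the trace convergence one has $e_n(0) \to 0$, and Gronwall then yields $e_n(\z) \to 0$ uniformly on $[0,1]$, hence $y_n \to 0$ in $L_2$ and $x_n \to 0$ in $X$.

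The main obstacle is precisely this cancellation: a crude Gronwall bound on $Y_n$ introduces the useless factor $\ee^{C\abs{\beta_n}}$, so the multiplier -- equivalently, the first-order PHS reformulation of the $N = 2$ problem -- must be tuned to the anti-/symmetry of $(P_1, P_2)$ so that all unbounded $\beta_n$-contributions cancel identically to leading order. The Lipschitz regularity of $\H$ is then needed to absorb the commutator terms produced by the variable coefficient $\H^{-1}$ into the lower-order remainder, in analogy with the $N = 1$ argument of Villegas et al.; the right-endpoint dissipation $\abs{(\H x_n)(1)}^2$, although not strictly required in the forward propagation from $\z = 0$, provides additional slack and permits a symmetric reverse propagation from $\z = 1$ should the forward one not close.
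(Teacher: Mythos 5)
Your frame (Gearhart--Pr\"uss--Huang in sequential form, extraction of the boundary traces from the dissipation estimate, compact resolvent plus an ODE uniqueness argument to rule out eigenvalues on $\ii\R$) is the right one and matches the route the cited reference takes; the point-spectrum part of your argument is complete and correct. The gap is in the resolvent bound: the central estimate --- showing $x_n \rightarrow 0$ from $f_n \rightarrow 0$ and the three vanishing traces, uniformly in $\beta_n$ --- is exactly the step you label as ``the goal'' and ``the main obstacle'' without carrying it out, and the route you sketch for it (a pointwise Gronwall propagation of a tuned Hermitian form $e_n(\z)$ in the spatial variable, starting from the full Cauchy data at $\z = 0$) cannot be made to work as described. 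For $N=2$ the characteristic roots of the resolvent ODE scale like $\abs{\beta_n}^{1/2}$, so no $\z$-pointwise quadratic form in $(y_n, y_n')$ with $\beta_n$-independent coefficients has a derivative controlled by itself uniformly in $\beta_n$; more decisively, your parenthetical claim that the term $\abs{(\H x_n)(1)}^2$ is ``not strictly required in the forward propagation from $\z=0$'' would, if the propagation closed, prove that full dissipation at one end alone suffices --- and the paper explicitly recalls a Schr\"odinger-type counterexample showing that full dissipation at $\z=0$ together with conservative conditions at $\z=1$ yields only asymptotic, \emph{not} uniform exponential, stability. So the information at $\z=1$ is essential, not slack, and any argument that does not use it is proving a false statement.

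The actual mechanism (visible in the appendix of this paper, Lemmas \ref{lem:6.1} and \ref{lem:6.2}, and in the cited reference) is an \emph{integrated} multiplier identity rather than a pointwise Gronwall bound: one pairs $\mathfrak{A}x_n - \ii\beta_n x_n$ with $\tfrac{\ii q}{\beta_n}(\H x_n)'$ for a scalar function $q$, integrates by parts using $P_2^* = -P_2$, and takes real parts to obtain
\[
 \Re \sp{x_n}{(2q'\H - q\H')x_n}_{L_2}
  = \left[ \, \text{boundary terms in } (\H x_n)(\z) \text{ and } \tfrac{1}{\beta_n}(\H x_n)'(\z) \, \right]_0^1 + o(1),
\]
choosing $q$ with $2q'\H - q\H' \geq \varepsilon I$ (possible since $\H$ is Lipschitz and coercive). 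The cross boundary terms of the form $\tfrac{1}{\beta_n}\sp{(\H_2 x_{n,2})'(\z)}{q(\z)(\H_1 x_{n,1})'(\z)}$ at \emph{both} endpoints are what force the preliminary estimate $\tfrac{1}{\sqrt{\abs{\beta_n}}}\norm{(\H x_n)'}_{L_2} \rightarrow 0$ of Lemma \ref{lem:6.1}, whose hypothesis requires trace information at both $\z = 0$ and $\z = 1$ --- which is precisely where the assumed decay of $\abs{(\H x_n)(1)}^2$ enters irreplaceably. Your proposal would become a proof if you replaced the Gronwall plan by this integrated identity and verified the boundary bookkeeping; as written, the decisive cancellation is asserted but not exhibited, and the stated dispensability of the $\z=1$ term is incorrect.
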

  \begin{proof}
  See \cite{AugnerJacob_2014}.
  \end{proof}
  
 \begin{remark}
  By the way, fully dissipative boundary conditions at \emph{both ends}
   \[
    \Re \sp{A x}{x}_X
     \leq - \kappa \left( \sum_{k=0}^{N-1} \abs{(\H x)^{(k)}(0)}^2 + \abs{(\H x)^{(k)}(1)}^2 \right),
     \quad
     x \in \dom(A)
   \]  
  for some $\kappa > 0$, always lead to uniform exponential stability, for all port-Hamiltonian systems of arbitrary order $N \in \N$ and for Lipschitz continuous $\H$, see \cite{Augner_2016}.
 \end{remark}
 
 In this article, we are concerned with the case where a port-Hamiltonian system $\mathfrak{S}^1$ is interconnected with further port-Hamiltonian systems in a energy preserving or dissipative way, e.g.\
 
 \begin{example}[Interconnection of impedance passive PHS]
  Let $\mathfrak{S}^1$ and $\mathfrak{S}^2$ be two impedance passive port-Hamiltonian systems with $N^1 d^1 = N^2 d^2$, i.e.\ the input and output spaces for $\mathfrak{S}^1$ and $\mathfrak{S}^2$ should have the same dimension, then the operator $A: \dom(A) \subseteq X \rightarrow X$ defined by
   \begin{align*}
    A (x^1,x^1)
     &:= (\mathfrak{A}^1 x^1, \mathfrak{A}^1 x^2)
     \\
    \dom(A)
     &= \{ x = (x^1,x^2) \in \dom(\mathfrak{A}^1) \times \dom(\mathfrak{A}^2): \, \mathfrak{B}^1 x^1 = - \mathfrak{C}^2 x^2, \, \mathfrak{B}^2 x^2 = \mathfrak{C}^1 x^1 \}
   \end{align*}
  is dissipative on the product Hilbert space $X = X^1 \times X^2$ and generates a strongly continuous contraction semigroup $(T(t))_{t \geq 0}$ on $X$.
 \end{example}
 
 \begin{proof}
 The dissipativity of the operator $A$ can be checked using the impedance passivity of the two subsystems:
  \begin{align*}
   \Re \sp{A x}{x}_X
    &= \sum_{j = 1}^2 \Re \sp{\mathfrak{A}^j x^j}{x^j}_{X^j}
    \\
    &\leq \Re \sp{\mathfrak{B}^1 x^1}{\mathfrak{C}^1 x^1} + \Re \sp{\mathfrak{B}^2 x^2}{\mathfrak{C}^2 x^2}
    = 0,
    \quad
    x \in \dom(A).
  \end{align*}
 For the generation result, see Proposition \ref{prop:gen_static_feedback} in the next section.
 \end{proof}

 \begin{example}[Interconnection of scattering passive PHS]
  Let $\mathfrak{S}^1, \ldots, \mathfrak{S}^m$ be scattering passive port-Hamiltonian systems.
  Then
   \begin{align*}
    A(x^1, \ldots, x^m)
     &= (\mathfrak{A}^1 x^1, \ldots, \mathfrak{A}^m x^m)
     \\
    \dom(A)
     &= \{ x = (x^1, \ldots, x^m) \in \prod_{j=1}^m \dom(\mathfrak{A}^j), \, \mathfrak{B}^1 x^1 = 0, \, \mathfrak{B}^j x^j = \mathfrak{C}^{j-1} x^{j-1} \ (j \geq 1) \}
   \end{align*}
  is dissipative on $X = \prod_{j=1}^m X^j$ and generates a strongly continuous contraction semigroup $(T(t))_{t \geq 0}$ on $X$.
 \end{example}
 \begin{proof}
 Dissipativity can be checked easily by using the scattering-passivity of the subsystems $\mathfrak{S}^j$.
 For the generation result, see Proposition \ref{prop:gen_static_feedback} in the next section.
 \end{proof}
 
 We comment on stability properties later on.

\section{Port-Hamiltonian Systems: Networks}
\label{sec:PHS_networks}

 After recalling some known results on different static or dynamic closure relations for port-Hamiltonian systems, let us focus on the main topic of this paper, namely the interconnection of several infinite-dimensional port-Hamiltonian subsystems to a \emph{network} of port-Hamiltonian systems.
 Assume that $\mathcal{J} = \{1, 2, \ldots, m\}$ is a finite index set.
 Here, the number $m \in \N$ is the number of infinite-dimensional port-Hamiltonian subsystems $\mathfrak{S}^j = (\mathfrak{A}^j, \mathfrak{B}^j, \mathfrak{C}^j)$ the network consists of.
 Moreover, $\mathcal{J}_c = \{1, 2, \ldots, m_c\}$ denotes another index set, corresponding to a finite number of finite-dimensional \emph{linear control systems} $\Sigma_c^j = (A_c^j , B_c^j, C_c^j, D_c^j)$, we may interconnect the port-Hamiltonian systems with, the case $m_c = 0$, i.e.\ $\mathcal{J}_c = \emptyset$ being allowed, but w.l.o.g.\ we may always assume that $m_c = m \in \N$.

 We generally assume that $\mathfrak{S}^j = (\mathfrak{A}^j, \mathfrak{B}^j, \mathfrak{C}^j)$ $(j \in \mathcal{J})$ are (open-loop, linear, infinite-dimensional) port-Hamiltonian systems (on a one-dimensional spatial domain) on spaces $X^j = L_2(0,1;\K^{d_j})$ (all equipped with their respective energy norm $\| \cdot \|_{X^j} = \| \cdot \|_{\H^j}$, thus being Hilbert spaces for the energy inner product $\sp{\cdot}{\cdot}_{X^j} = \sp{\cdot}{\cdot}_{\H^j}$ and input and output spaces $U^j = Y^j = \K^{N^j d^j}$, and similarly $\Sigma_c^j = (A_c^j, B_c^j, C_c^j. D_c^j)$ $(j \in \mathcal{J}_c)$ are finite-dimensional linear control systems with finite dimensional state space $X_c^j$ and finite-dimensional input and output space $U_c^j = Y_c^j$.
 We further set
  \begin{align*}
   \hat X
    &:= X \times X_c
    := \prod_{j = 1}^m X^j \times \prod_{j=1}^{m_c} X_c^j,
    \\
   \hat U
    &:= U \times U_c
    := \prod_{j = 1}^m U^j \times \prod_{j=1}^{m_c} U_c^j
    = \prod_{j = 1}^m Y^j \times \prod_{j=1}^{m_c} Y_c^j
    =: Y \times Y_c
    =: \hat Y.
  \end{align*}
 We equip these spaces with their respective product inner product and the induced norms, i.e.\
  \begin{align*}
    \sp{\hat x}{\hat y}_{\hat X}
     &= \sum_{j \in \mathcal{J}} \sp{x^j}{y^j}_{X^j} + \sum_{j \in \mathcal{J}_c} \sp{x_c^j}{y_c^j}_{X_c^j},
     \quad
     \hat x, \hat y \in \hat X
     \\
    \| \hat x \|_{\hat X}
     &= \sqrt{ \sum_{j \in \mathcal{J}} \| x^j \|_{X^j}^2 + \sum_{j \in \mathcal{J}_c} \| x_c^j \|_{X_c^j}^2 },
     \quad
     \hat x \in \hat X
  \end{align*}
 and accordingly for $\hat U = \hat Y$.
  For $x \in \prod_{j \in \mathcal{J}} \dom(\mathfrak{A}^j)$ we write
   \begin{align*}
    \mathfrak{A} x
     &:= (\mathfrak{A}^1 x^1, \ldots, \mathfrak{A}^m x^m)
     \in X,
     \\
    \mathfrak{B} x
     &:= (\mathfrak{B}^1 x^1, \ldots, \mathfrak{B}^m x^m)
     \in U,
     \\
    \mathfrak{C} x
     &:= (\mathfrak{C}^1 x^1, \ldots, \mathfrak{C}^m x^m)
     \in Y.
   \end{align*}
  This defines linear operators \[ \mathfrak{A}: \dom(\mathfrak{A}) = \prod_{j \in \mathcal{J}} \dom(\mathfrak{A}^j) \subseteq X \rightarrow X, \quad \mathfrak{B}, \mathfrak{C}: \dom(\mathfrak{B}) = \dom(\mathfrak{C}) = \dom(\mathfrak{A}) \subseteq X \rightarrow U = Y. \]
  Also, we define $A_c \in \B(X_c)$, $B_c \in \B(U_c,X_c)$, $C_c \in \B(X_c, Y_c)$ and $D_c \in \B(U_c, Y_c)$ by
   \begin{align*}
    A_c x_c
     &= (A_c^1 x_c^1, \ldots, A_c^{m_c} x_c^{m_c}),
     \quad
     x_c \in X_c
     \\
    B_c u_c
     &= (B_c^1 u_c^1, \ldots, B_c^{m_c} u_c^{m_c}),
     \quad
     u_c \in U_c
     \\
    C_c x_c
     &= (C_c^1 x_c^1, \ldots, C_c^{m_c} x_c^{m_c}),
     \quad
     x_c \in X_c
     \\
    D_c u_c
     &= (D_c^1 u_c^1, \ldots, D_c^{m_c} u_c^{m_c}),
     \quad
     u_c \in X_c.
   \end{align*}
 Further, let
  \[
   E_c \in \B(Y;U_c),
    \quad
   E \in \B(Y_c; U). 
  \]
 Now, interconnect the subsystems via the relation
  \[
   \mathfrak{B} x
    = - E (C_c x_c + D_c E_c \mathfrak{C} x).
  \]
 \begin{remark}
  To keep the presentation as simple as possible, in this exposition we will always assume that $Y = U_c$ and $U = Y_c$ as well as $E_c = I$ and $E = I$ are the identity maps.
 \end{remark}
 We may then define the following operator $\hat A$ on $\hat X$
  \begin{align*}
   \hat A \hat x
    &:= (\mathfrak{A} x, A_c x_c + B_c \mathfrak{C} x)
    \\
   \dom(\hat A)
    &:= \left\{ \hat x = (x, x_c) \in \dom(\mathfrak{A}) \times X_c: \, \mathfrak{B} x = - (C_c x_c + D_c \mathfrak{C} x) \right\}.
  \end{align*}
  
  \begin{example}
   Let us consider two particular special cases:
    \begin{enumerate}
     \item
      If $m_c = 0$, i.e.\ $X_c = U_c = Y_c = \{ 0 \}$, we can identify $\hat X = X$, $\hat Y = Y$, $\hat U = U$ and $C_c x_c + D_c \mathfrak{C} x = D_c \mathfrak{C} x$, so that
       \begin{align*}
        \hat A \hat x
         &= \mathfrak{A} x
         \\
        \dom(\hat A)
         &= \{ x \in \dom(\mathfrak{A}): \mathfrak{B} x = - D_c \mathfrak{C} x \}.
       \end{align*}
      In this case, no finite dimensional control system is present and this just describes the interconnection of port-Hamiltonian systems $\mathfrak{S}_i$ by boundary feedback, with the special case $m = 1$ being the case of a port-Hamiltonian system closed by linear boundary feedback.
     \item
      If $m = m_c = 1$ and $U = Y_c = U_c = Y$, the operator $\hat A$ reads as
       \begin{align*}
        \hat A \hat x
         &= (\mathfrak{A} x, A_c x_c + B_c \mathfrak{C} x)
         \\
        \dom(\hat A)
         &= \{ \hat x \in \dom(\mathfrak{A}) \times X_c: \, \mathfrak{B} x = - C_c x_c - D_c \mathfrak{C} x \}
       \end{align*}
      so that we are in the case of dynamic boundary feedback with a finite dimensional control system interconnected by standard feedback interconnection with the port-Hamiltonian system.
    \end{enumerate}
  \end{example}
  
  \begin{remark}
   \begin{enumerate}
   \item
   Note that the abstract Cauchy problem
    \[
     \frac{\dd}{\dd t} \hat x(t)
      = \hat A \hat x(t)
      \, (t \geq 0), \,
      \quad
     \hat x(0)
      = (x_0, x_{c,0})
      \in X \times X_c
    \]
   is equivalent to the system of PDE and ODE
    \begin{align*}
     \frac{\dd}{\dd t} x(t)
      &= \mathfrak{A} x(t),
      \\
     \frac{\dd}{\dd t} x_c(t)
      &= A_c x_c(t) + B_c u_c(t),
      \\
     \mathfrak{B} x(t)
      &= - (C_c x_c + D_c u_c),
      \\
     u_c(t)
      &= \mathfrak{C} x(t),
      &&t \geq 0.
    \end{align*}
   \item
   The definition of $\hat A$ does, at first glance, not allow complex systems where the input into one finite dimensional control system depends upon the output from another finite dimensional controller.
   However, in most cases it should be possible, to merge such two finite control systems into a larger control system, by plugging in the equations of one of these systems into the other.
   \end{enumerate}
  \end{remark}

 Our proof of the general generation result, Theorem \ref{thm:generation_network-dynamic}, below  uses the special case where $X_c = \{0\}$, i.e.\ no finite dimensional control systems are present within the network.
 We, therefore, begin by considering the generation result for this particular special case. 
 
  \begin{proposition}
  \label{prop:gen_static_feedback}
  Assume that $X_c = \{0\}$. Then $\hat A$ generates a contractive $C_0$-semigroup on $\hat X \cong X$ if and only if $\hat A$ is dissipative.
 \end{proposition}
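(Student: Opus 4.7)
The strategy is to apply the Lumer--Phillips theorem on the Hilbert space $X$ equipped with its product energy inner product. The necessity of dissipativity is immediate for any contraction semigroup. For sufficiency, three things need to be checked: $\hat A$ is densely defined, closed, and satisfies the range condition $\ran (\lambda - \hat A) = X$ for some $\lambda > 0$.

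Density and closedness are quick. A dense subspace of $\dom(\hat A)$ in $X$ is furnished by all tuples $x = (x^1, \ldots, x^m)$ with $\H^j x^j \in C_c^\infty(0,1;\K^{d^j})$ for each $j$: such tuples automatically satisfy $\tau(\H^j x^j) = 0$, hence lie in $\ker \mathfrak{B} \cap \ker \mathfrak{C} \subseteq \dom(\hat A)$, and coercivity of each $\H^j$ together with density of $C_c^\infty$ in $L_2$ yields density in $X$. Closedness of $\hat A$ follows from the closedness of each maximal port-Hamiltonian operator $\mathfrak{A}^j$ (remarked after Definition~\ref{def:PHS}), combined with continuity of the boundary trace $\tau \circ \H^j : \dom(\mathfrak{A}^j) \to \K^{2 N^j d^j}$ with respect to the graph norm, so that the defining linear constraint $\mathfrak{B} x + D_c \mathfrak{C} x = 0$ cuts out a closed subspace of the graph.

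The main step is the range condition. Fix $\lambda > 0$ and $f \in X$. For each $j$, since $P_{N^j}^j$ is invertible, the equation $(\lambda - \mathfrak{A}^j) x^j = f^j$ rewrites as a first-order linear system in $\H^j x^j$ on $(0,1)$ with bounded coefficients, whose set of solutions in $\dom(\mathfrak{A}^j)$ is an affine subspace of dimension exactly $N^j d^j$, parametrized, e.g., by the initial data $((\H^j x^j)^{(k)}(0))_{k=0}^{N^j-1}$. Thus, the solutions of $(\lambda - \mathfrak{A}) x = f$ in $\dom(\mathfrak{A})$ form an affine subspace of dimension $n := \sum_{j} N^j d^j = \dim U$, and imposing the coupling condition $\mathfrak{B} x + D_c \mathfrak{C} x = 0$ yields a linear system of $n$ equations in $n$ unknowns.

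Solvability of this square finite-dimensional system reduces to its injectivity: if $x_h \in \dom(\mathfrak{A})$ satisfies both $(\lambda - \mathfrak{A}) x_h = 0$ and $\mathfrak{B} x_h + D_c \mathfrak{C} x_h = 0$, then $x_h \in \ker(\lambda - \hat A)$, and dissipativity yields $\lambda \norm{x_h}_X^2 \leq \Re \sp{(\lambda - \hat A) x_h}{x_h}_X = 0$, forcing $x_h = 0$. Hence the system is invertible, a suitable $x \in \dom(\hat A)$ exists, and Lumer--Phillips applies. The main obstacle is bookkeeping: setting up the parametrization of $\ker(\lambda - \mathfrak{A})$ consistently across subsystems of possibly different orders $N^j$ so that the boundary coupling becomes a \emph{square} linear system to which the injectivity argument applies. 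Once this is in place, the remainder is standard; in particular, the argument closely parallels the single-system case treated in Theorem~\ref{thm:generation_thm_static}, carried out in parallel on each of the $m$ intervals before the coupling is imposed.
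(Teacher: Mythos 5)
Your proposal is correct and follows essentially the same route as the paper: reduce the resolvent equation $(\lambda-\mathfrak{A})x=f$ on each edge to a first-order ODE system parametrized by initial data (the paper does this explicitly via variation of constants after first normalising to $\H=I$ and $P_0=0$), observe that the interconnection condition then becomes a square linear system of size $\dim U$, and obtain its invertibility from injectivity, which follows from dissipativity exactly as you argue. The only cosmetic differences are that the paper fixes $\lambda=1$ and performs the $\H=I$, $P_0=0$ reduction up front, whereas you work directly with the general coefficients; both are fine.
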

 
 \begin{proof}
 Since $\H = \operatorname{diag}_{j \in \mathcal{J}} \H^j$ is a strictly coercive (matrix) multiplication operator on $X$, by Lemma 7.2.3 in \cite{JacobZwart_2012} we can restrict ourselves to the case $\H = I \in \B(X)$.
 Further, let us for the moment assume that all $P_0^j = 0$ (or a constant matrix independent of $\z \in (0,1)$ with negative semi-definite symmetric part).
 Since $\mathfrak{B} x = \mathfrak{C} x = 0$ for all $x \in \prod_{j \in \mathcal{J}} C_c^\infty(0,1;\K^{d_j})$ and this set is dense in $X$, the operator $\hat A$ is densely defined, so that by the Lumer-Phillips Theorem, see e.g.\ Theorem II.3.15 in \cite{EngelNagel_2000}, it remains to prove that $\lambda I - \hat A$ is surjective for some $\lambda > 0$ whenever $\hat A$ is dissipative.
 Here, we choose $\lambda = 1$.
 Take $f = (f_j)_{j \in \mathcal{J}} \in \prod_{j \in \mathcal{J}} X^j = X$.
 Then we have to find $x \in \dom(\mathfrak{A})$ such that
  \begin{align*}
   (\mathfrak{A} - I) x
    &= f
    \\
   \mathfrak{B} x
    &= - D_c \mathfrak{C} x
    =: K \mathfrak{C} x.
  \end{align*}
 We can identify the operator $\hat A: \dom(\hat A) \subseteq X \times \{0\} \rightarrow X \times \{0\}$ with the operator $A = \mathfrak{A}|_{\ker (\mathfrak{B} - K \mathfrak{C} x)}: \dom(A) = \ker (\mathfrak{B} - K \mathfrak{C}) \subseteq X \rightarrow X$.
 For every $j \in \mathcal{J}$ we now write
  \[
   h^j
    = (x^j, (x^j)', \ldots, (x^j)^{(N_j - 1)}),
    \quad
   g^j
    = (0, \ldots, 0, (P^j_{N_j})^{-1} f^j),
    \quad
    j \in \mathcal{J}.
  \]
 Then
  \begin{align*}
   &(\mathfrak{A} - I) x
    = f
    \\
    \quad &\Leftrightarrow \quad
   (\mathfrak{A}^j - I) x^j
    = f^j,
    \quad
    j \in \mathcal{J}
    \\
   &\Leftrightarrow \quad
   \sum_{k=0}^{N_j} P^j_k (x^j)^{(k)}(\z) - x^j(\z) = f^j(\z),
   \quad
   \text{a.e.\ } \z \in (0,1), \, j \in \mathcal{J}
   \\
   &\Leftrightarrow \quad
   (x^j)^{(N_j)}(\z)
    = (P^j_{N_j})^{-1} \left( x^j(\z) - \sum_{k=0}^{N_j-1} P^j_k (x^j)^{(k)}(\z) + f^j(\z) \right),
   \quad
   \text{a.e.\ } \z \in (0,1), \, j \in \mathcal{J}
   \\
   &\Leftrightarrow \quad
   (h^j)'(\z)
    = L^j h^j(\z) + g^j(\z),
   \quad
   \text{a.e.\ } \z \in (0,1), \, j \in \mathcal{J}
   \\
   &\Leftrightarrow \quad
   h^j(\z)
    = \ee^{\z L^j} h^j(0) + \int_0^\z \ee^{(\z - s) L^j} g^j(s) \dd s,
   \quad
   \text{a.e.\ } \z \in (0,1), \, j \in \mathcal{J}
  \end{align*}
 where
  \[
   L^j
    = \left[ \begin{array}{ccccc} 0 & 1 &&& 0 \\  \vdots && \ddots && \vdots \\ 0 &&& 1 & 0 \\ (P^j_{N_j})^{-1} - (P^j_{N_j})^{-1} P^j_0 & - (P^j_{N_j})^{-1} P^j_1 & \cdots & \cdots & - (P^j_{N_j})^{-1} P^j_{N_j - 1} \end{array} \right]
    \in \K^{N^j d^j \times N^j d^j}.
  \]
 In that case, we have, writing $D_c = (D_c^{ij})_{i,j \in \mathcal{J}}$ for $D_c^{ij} \in \B(Y^j; U^i)$, that $x \in \dom(A)$ if and only if
  \begin{align*}
   &W^j_B \left[ \begin{array}{c} \ee^{L^j} \\ I \end{array} \right] h^j(0)
    + W^j_B \left[ \begin{array}{c} \int_0^1 \ee^{(1-s)L^j} g^j(s) \dd s \\ 0 \end{array} \right]
    \\
    &= W_B^j \tau^j(\H^j x^j)
    = - \sum_{i \in \mathcal{J}} D_c^{ji} W_C^i \tau^i(\H^i x^i)
    \\
    &= - \sum_{i \in \mathcal{J}} D_c^{ji} \left( W^i_C \left[ \begin{array}{c} \ee^{L^i} \\ I \end{array} \right] h^i(0)
    + W^i_C \left[ \begin{array}{c} \int_0^1 \ee^{(1-s)L^i} g^i(s) \dd s \\ 0 \end{array} \right] \right),
    \quad
    j \in \mathcal{J}.
  \end{align*}
 Letting
  \begin{align*}
   \hat \xi
    &:= (\hat \xi^j)_{j \in \mathcal{J}},
    \quad
   \hat \xi^j
    := \left[ \begin{array}{c} \int_0^1 \ee^{(1-s)L^j} g^j(s) \dd s \\ 0 \end{array} \right]
    \\
   \hat h
    &:= (\hat h^j)_{j \in \mathcal{J}},
    \quad
   \hat h^j
    := \left[ \begin{array}{c} \ee^{L^j} \\ I \end{array} \right] h^j(0),
    \quad
    j \in \mathcal{J}
  \end{align*}
 and defining
  \[
   W_B
    := \operatorname{diag}\, ( W_B^j )_{j \in \mathcal{J}},
    \quad
   W_C
    := \operatorname{diag}\, ( W_C^j )_{j \in \mathcal{J}}
    \in \B(U^2; U),
    \quad
   T
    := \operatorname{diag} \left( \left[ \begin{array}{c} \ee^{M_j} \\ I \end{array} \right] \right)_{j \in \mathcal{J}}
    \in \B(U; U^2)
  \]
 this equation reads as
  \[
   (W_B + D_c W_C) (T \hat h - \hat \xi)
    = 0
  \]
 where $\xi$ is determined by $f$.
 We are done after showing that $(W_B + D_c W_C) T \in \B(U)$ is invertible.
 Namely, then the unique solution $x \in \dom(\hat A)$ is given by
  \[
   x^j = h^j_1,
    \quad
    h^j(0) = \hat h^j,
    \quad
    \hat h = ((W_B - K W_C) T)^{-1} (W_B + D_c W_C) \hat \xi.
  \]
 So, let us show that $(W_B + D_c W_C) T \in \B(U)$ is invertible. Since $U$ is finite dimensional it suffices to show that $(W_B + D_c W_C) T$ is injective.
 \emph{Assume} there were $\hat h \in U \setminus \{0\}$ such that
  \[
   (W_B + D_c W_C) T \hat h
    = 0
  \]
 Then $h^j(0) = \hat h^j$, $j \in \mathcal{J}$, are well-defined and for $f = 0$ the problem $(I - \mathfrak{A}) x = 0$ has a solution $x = (x^j)_{j \in \mathcal{J}} := (h^j)_{j \in \mathcal{J}} \in \dom(\mathfrak{A})$ for which we also have
  \[
   \mathfrak{B} x - K \mathfrak{C} x
    = (W_B + D_c W_C) T \hat h
    = 0,
  \]
 i.e.\ $x \in \dom(A)$ with $A x = x$, a contradiction to $A$ being dissipative, so $1 \not\in \sigma(A)$.
 This concludes the proof for the case $P_0 = 0$.
 \newline
 For the case of general $P_0 \not= 0$, note that $P_0 \H$ is a bounded perturbation of $\hat A - P_0 \H$, hence $\hat A - P_0 \H$ generates a $C_0$-semigroup if and only if $\hat A$ generates a $C_0$-semigroup.
 The proof is then completed by the following small observation.
 \end{proof}
 
 \begin{lemma}
 \label{lem:P_0}
  The operator $\hat A$ is dissipative on $\hat X$ if and only if the operator $\hat A'$ where the $P_0^j$ are replaced by constant zero matrices is dissipative and additionally for all $j \in \mathcal{J}$ one has
   \[
    \Re \sp{P_0^j(\z) \xi^j}{\xi^j}_{\K^{d^j}} \leq 0,
     \quad
     \text{a.e.\ } \z \in (0,1), \, \text{all } \xi^j \in \K^{d^j}, \, j \in \mathcal{J},
   \]
  i.e.\ the symmetric parts $\Sym P_0^j(\z) = \frac{P_0^j(\z) + P_0^j(\z)^*}{2} \leq 0$ are negative semi-definite for a.e.\ $\z \in (0,1)$.
 \end{lemma}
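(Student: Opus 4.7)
The plan is to split the dissipativity identity into the $\hat A'$-part and a pointwise $P_0$-part, then treat each direction separately. Writing $P_0 := \operatorname{diag}(P_0^j)_{j \in \mathcal{J}}$, $\H := \operatorname{diag}(\H^j)_{j \in \mathcal{J}}$, $y^j := \H^j x^j$, and noting that the boundary conditions defining $\dom(\hat A)$ are independent of $P_0$ (so that $\dom(\hat A) = \dom(\hat A')$), one immediately obtains
\[
 \Re \sp{\hat A \hat x}{\hat x}_{\hat X}
  = \Re \sp{\hat A' \hat x}{\hat x}_{\hat X}
    + \sum_{j \in \mathcal{J}} \int_0^1 \Re \sp{P_0^j(\z) y^j(\z)}{y^j(\z)}_{\K^{d^j}} \dd \z
\]
for every $\hat x = (x, x_c) \in \dom(\hat A)$. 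The ``$\Leftarrow$'' direction is then immediate, since under the stated hypotheses both summands on the right-hand side are non-positive.

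For ``$\Rightarrow$'', I would first extract the pointwise condition on $P_0^j$. Fix $j \in \mathcal{J}$, let $y^j \in C_c^\infty(0, 1; \K^{d^j})$, and set $x^j := (\H^j)^{-1} y^j \in L_2(0, 1; \K^{d^j})$ (using coercivity of $\H^j$), $x^i := 0$ for $i \neq j$, and $x_c := 0$; then $\tau^j(\H^j x^j) = \tau^j(y^j) = 0$ yields $\mathfrak{B} x = \mathfrak{C} x = 0$ and consequently $\hat x = (x, 0) \in \dom(\hat A)$. Integration by parts over the compact support of $y^j$, combined with the antisymmetry relations $(P_k^j)^* = (-1)^{k+1} P_k^j$ for $k \geq 1$, forces $\Re \sp{\hat A' \hat x}{\hat x}_{\hat X} = 0$ for this particular $\hat x$, so the dissipativity of $\hat A$ reduces to
\[
 \int_0^1 \Re \sp{P_0^j(\z) y^j(\z)}{y^j(\z)}_{\K^{d^j}} \dd \z \leq 0,
  \quad y^j \in C_c^\infty(0, 1; \K^{d^j}).
\]
Extending to $L_2$ by density and testing with $y^j = \chi_E \xi$ for measurable $E \subseteq (0,1)$ and $\xi$ in a countable dense subset of $\K^{d^j}$ then produces $\Sym P_0^j(\z) \leq 0$ almost everywhere.

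The delicate half is to obtain dissipativity of $\hat A'$ from that of $\hat A$. The key structural observation is that $\Re \sp{\hat A' \hat x}{\hat x}_{\hat X}$ depends on $x$ only through the boundary trace $(\tau^j(\H^j x^j))_{j \in \mathcal{J}}$: integration by parts inside each higher-order term produces a quadratic form in $\tau^j(\H^j x^j)$, while the finite-dimensional contribution $\sp{A_c x_c + B_c \mathfrak{C} x}{x_c}_{X_c}$ depends only on $x_c$ and on $\mathfrak{C} x$, which is itself a linear function of the boundary trace. Given $\hat x_0 = (x_0, x_{c, 0}) \in \dom(\hat A')$, I would then exhibit a family $(x_\epsilon)_{\epsilon > 0} \subseteq \dom(\mathfrak{A})$ with $\tau^j(\H^j x_\epsilon^j) = \tau^j(\H^j x_0^j)$ for every $j$ and $\norm{x_\epsilon}_X \to 0$ as $\epsilon \downarrow 0$. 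Since neither the boundary data nor $x_{c, 0}$ is altered, $\hat x_\epsilon := (x_\epsilon, x_{c, 0})$ lies in $\dom(\hat A) = \dom(\hat A')$ and $\Re \sp{\hat A' \hat x_\epsilon}{\hat x_\epsilon}_{\hat X} = \Re \sp{\hat A' \hat x_0}{\hat x_0}_{\hat X}$ independently of $\epsilon$. Substituting into the decomposition of paragraph one gives
\[
 0 \geq \Re \sp{\hat A \hat x_\epsilon}{\hat x_\epsilon}_{\hat X}
  = \Re \sp{\hat A' \hat x_0}{\hat x_0}_{\hat X}
    + \sum_{j \in \mathcal{J}} \int_0^1 \Re \sp{P_0^j(\z) (\H^j x_\epsilon^j)(\z)}{(\H^j x_\epsilon^j)(\z)} \dd \z,
\]
where the remainder sum is controlled by $\norm{P_0}_\infty \norm{\H x_\epsilon}_{L_2}^2 \to 0$; passing to the limit delivers $\Re \sp{\hat A' \hat x_0}{\hat x_0}_{\hat X} \leq 0$.

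I expect the principal technical obstacle to be the construction of $(x_\epsilon)$. Concretely, multiplying the Taylor polynomials of $y_0^j := \H^j x_0^j$ at the endpoints $\z = 0$ and $\z = 1$ by smooth cutoffs that equal one near the respective endpoint and are supported in $[0, \epsilon] \cup [1 - \epsilon, 1]$ produces $z_\epsilon^j \in H^{N^j}(0, 1; \K^{d^j})$ with prescribed trace $\tau^j(z_\epsilon^j) = \tau^j(y_0^j)$ and $\norm{z_\epsilon^j}_{L_2} = O(\sqrt{\epsilon})$; setting $x_\epsilon^j := (\H^j)^{-1} z_\epsilon^j$ then delivers the required family.
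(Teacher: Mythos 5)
Your argument is correct and is essentially the strategy the paper invokes: the paper's own ``proof'' is only a pointer to Theorem 2.3 and Lemma 2.4 of \cite{AugnerJacob_2014}, whose method is exactly your decomposition of $\Re \sp{\hat A \hat x}{\hat x}_{\hat X}$ into a trace-dependent quadratic form plus the zero-order integral, tested once with compactly supported $\H^j x^j$ to isolate the $P_0^j$-condition and once with trace-preserving, $L_2$-small elements to isolate the dissipativity of $\hat A'$. You have simply written out in full the details that the paper delegates to that reference.
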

 
 \begin{proof}
 Use the same strategy as in the proof of Theorem 2.3 and Lemma 2.4 in \cite{AugnerJacob_2014} for every $j \in \mathcal{J}$.
 \end{proof}
 
 Having the generation result for static feedback interconnection at hand, we are able to prove the generation result for dynamic feedback interconnection via a finite dimensional linear control system as well. 
 
 \begin{theorem}
 \label{thm:generation_network-dynamic}
  $\hat A$ generates a contractive $C_0$-semigroup on $\hat X$ if and only if $\hat A$ is dissipative.
  In that case, $\hat A$ has compact resolvent.
 \end{theorem}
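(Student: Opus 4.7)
The approach is the Lumer-Phillips Theorem: the direction generator $\Rightarrow$ dissipative is automatic for a contraction semigroup, so the work lies in showing the converse. Granted dissipativity of $\hat A$, I need to verify that $\hat A$ is closed, densely defined, and $\ran(\lambda - \hat A) = \hat X$ for some $\lambda > 0$. Closedness follows from closedness of the maximal operator $\mathfrak{A}$ on $X$ (using invertibility of $P_N$, as in Section \ref{sec:basic_definition}) and continuity of the boundary trace map $\tau$ on $\dom(\mathfrak{A})$ with the graph norm, which turns the linear interconnection relation $\mathfrak{B} x + C_c x_c + D_c \mathfrak{C} x = 0$ into a closed constraint in $\dom(\mathfrak{A}) \times X_c$. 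For density I would split any $(\tilde x, x_c) \in \hat X$ as $(\tilde x - y, 0) + (y, x_c)$, where---via surjectivity of $\bigl[\begin{smallmatrix} W_B \\ W_C \end{smallmatrix}\bigr]$ together with a cutoff---one picks $y \in \dom(\mathfrak{A})$ of arbitrarily small $X$-norm satisfying $\mathfrak{B} y = -C_c x_c$ and $\mathfrak{C} y = 0$, so that $(y, x_c) \in \dom(\hat A)$; the remainder $\tilde x - y$ is then approximated in $X$ by compactly supported test functions $x_0$, and $(x_0 + y, x_c) \in \dom(\hat A)$ approximates $(\tilde x, x_c)$.

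For the range condition, I would fix $\lambda > 0$ so large that $\lambda \in \rho(A_c)$, which is possible since $A_c$ is finite-dimensional. Given $(f, f_c) \in \hat X$, the finite-dimensional part is solved explicitly: $x_c = (\lambda - A_c)^{-1}(f_c + B_c \mathfrak{C} x)$. Substituting into the closure relation reduces everything to finding $x \in \dom(\mathfrak{A})$ satisfying
\[
 (\lambda - \mathfrak{A}) x = f,
 \qquad
 \mathfrak{B} x - K_\lambda \mathfrak{C} x = g_\lambda,
\]
where $K_\lambda := -\bigl(D_c + C_c(\lambda - A_c)^{-1} B_c\bigr)$ and $g_\lambda := -C_c(\lambda - A_c)^{-1} f_c$. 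This is a static feedback range problem with an inhomogeneous boundary datum. Adapting the ODE reformulation in the proof of Proposition \ref{prop:gen_static_feedback} (with $\lambda$-dependent companion matrices $L^j_\lambda$ and propagator $T_\lambda$ in place of the $\lambda = 1$ versions) reduces solvability to the invertibility of the finite-dimensional matrix $(W_B - K_\lambda W_C) T_\lambda$ on $U$. The pivotal step is to argue this invertibility by contradiction: any nontrivial kernel element $\hat h$ defines via $h^j(\zeta) = \ee^{\zeta L^j_\lambda} h^j(0)$ a nonzero $x \in \dom(\mathfrak{A})$ with $\mathfrak{A} x = \lambda x$ and $\mathfrak{B} x - K_\lambda \mathfrak{C} x = 0$; pairing with $x_c := (\lambda - A_c)^{-1} B_c \mathfrak{C} x$ yields $\hat x = (x, x_c) \in \dom(\hat A) \setminus \{0\}$ with $\hat A \hat x = \lambda \hat x$, contradicting dissipativity of $\hat A$ since $\lambda > 0$.

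The main technical obstacle I anticipate is the bookkeeping in this contradiction step: one has to check that the transformed boundary condition $\mathfrak{B} x - K_\lambda \mathfrak{C} x = 0$ together with the chosen $x_c$ indeed recovers the original closure relation $\mathfrak{B} x + C_c x_c + D_c \mathfrak{C} x = 0$, and that the $\lambda$-dependence in $L^j_\lambda$ and $K_\lambda$ does not spoil the algebraic argument from Proposition \ref{prop:gen_static_feedback}. The compact resolvent statement follows from the embedding $\dom(\hat A) \hookrightarrow \hat X$ being compact: by the closed graph theorem the map $x \mapsto \H x$ is bounded from $\dom(\mathfrak{A})$ (graph norm) into $H^N(0,1;\K^d)$, so Rellich-Kondrachov applied to $\H x$, combined with coercivity of $\H$ to pass back to $x = \H^{-1}(\H x)$ in $L_2$, yields compactness in $X$, while the $X_c$-factor is finite-dimensional.
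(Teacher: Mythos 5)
Your proposal is correct and follows the paper's skeleton — Lumer--Phillips, density via surjectivity of $\left[ \begin{smallmatrix} W_B \\ W_C \end{smallmatrix} \right]$ plus approximation by test functions, reduction of the range condition to a static boundary-value problem at some $\lambda \in \rho(A_c)$ with $K_\lambda = -(D_c + C_c(\lambda - A_c)^{-1}B_c)$, and compactness of the resolvent via Rellich--Kondrachov — but it closes the range argument by a genuinely different route. The paper homogenises the boundary datum with a right-inverse $B_{\mathrm{cl}}$ of $\mathfrak{B}_{\mathrm{cl}} = \mathfrak{B} - K_\lambda \mathfrak{C}$, proves that the static operator $A_{\mathrm{cl}} = \mathfrak{A}|_{\ker \mathfrak{B}_{\mathrm{cl}}}$ is dissipative by pairing each $x \in \dom(A_{\mathrm{cl}})$ with $x_c := (\lambda - A_c)^{-1}B_c\mathfrak{C}x$, and then invokes Proposition \ref{prop:gen_static_feedback} as a black box to get $\lambda \in \rho(A_{\mathrm{cl}})$. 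You instead keep the inhomogeneous datum, rerun the ODE reformulation from the proof of Proposition \ref{prop:gen_static_feedback} with $\lambda$-dependent companion matrices, and kill the kernel of $(W_B - K_\lambda W_C)T_\lambda$ by promoting a putative kernel element to an eigenvector $\hat x = (x, x_c)$ of $\hat A$ with eigenvalue $\lambda > 0$ — using the same pairing $x_c = (\lambda - A_c)^{-1}B_c\mathfrak{C}x$, which indeed gives $A_c x_c + B_c \mathfrak{C}x = \lambda x_c$ — contradicting dissipativity of $\hat A$ directly. Both arguments are sound; the paper's buys modularity (the dynamic case literally reduces to the already-proven static generation theorem), while yours is more self-contained and localises the use of dissipativity in a single eigenvalue contradiction, at the cost of repeating the companion-matrix bookkeeping with $\lambda$-dependent data. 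Your explicit remarks on closedness of $\hat A$ and on choosing the corrector $y$ with small norm are details the paper leaves implicit, and they are correct.
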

  \begin{proof}
  Clearly, by the Lumer-Phillips Theorem, $\hat A$ is necessarily dissipative if it generates a strongly continuous contraction semigroup $(T(t))_{t \geq 0}$.
  Therefore, we only have to show that this condition is (just as for single port-Hamiltonian systems with static or dynamic boundary feedback) even sufficient.
  Let us further note that we can restrict ourselves to the case $\H^j = I$ for all $j \in \mathcal{J}$, see e.g.\ Lemma 7.2.3 in \cite{JacobZwart_2012}, and $P_0 = 0$, cf.\ Lemma \ref{lem:P_0}.
  By the Lumer-Phillips Theorem, we have to show that $\operatorname{ran}\, (\lambda - \hat A) = \hat X$ for some $\lambda > 0$ and that $\hat A$ is densely defined.
  First, we show that $\hat A$ is densely defined.
  Take any $(x,x_c) \in \hat X = X \times X_c$ and $\varepsilon > 0$.
  Then, the condition
   \[
    \mathfrak{B} x = - (C_c x_c + D_c \mathfrak{C} x)
   \]
  is equivalent to the condition
   \[
    \mathfrak{B} x + D_c \mathfrak{C} x
     = - C_c x_c
     =: w \in U
   \]
  The left hand side can be written as
   \[
    \mathfrak{B} x + D_c \mathfrak{C} x
    = \left[ \begin{array}{cc} I & D_c \end{array} \right] \left[ \begin{array}{c} W_B \\ W_C \end{array} \right] \tau(\H x)
   \]
  where we used the notation $W_B = \operatorname{diag}_{j \in \mathcal{J}}\, \{ W^j_B \} \in \B(U \times Y; U)$, $W_C = \operatorname{diag}_{j \in \mathcal{J}}\, \{ W^j_C \} \in \B(U \times Y; Y)$ and $\tau(\H x) = (\tau^j(\H_j x^j))_{j \in \mathcal{J}} \in U^2 = U \times Y = Y^2$.
  By the definition of a port-Hamiltonian system, the matrix $\left[ \begin{smallmatrix} W_B \\ W_C \end{smallmatrix} \right]$ is invertible as it is similar to the block-diagonal matrix $\operatorname{diag}\, \left( \left[ \begin{smallmatrix} W_B^j \\ W_C^j \end{smallmatrix} \right] \right)_{j \in \mathcal{J}}$.
  Moreover, the matrix $\left[ \begin{array}{cc} I & D_c \end{array} \right] \in \B(U \times Y; U)$ has full rank, in particular $\left[ \begin{array}{cc} I & D_c \end{array} \right] \left[ \begin{smallmatrix} W_B \\ W_C \end{smallmatrix} \right]$ is surjective, i.e.\ there is $v \in U \times Y$ such that
   \[
    \left[ \begin{array}{cc} I & D_c \end{array} \right] \left[ \begin{array}{c} W_B \\ W_C \end{array} \right] v
    = w.
   \]
  One then finds $x_0 \in \dom(\mathfrak{A})$ such that $\tau (\H x_0) = v$, hence $(x_0, x_c) \in \dom(\hat A)$.
  Since $\prod_{j \in \mathcal{J}} C_c^\infty(0,1;\K^{d_j})$ is dense is $X$, there is $x_1 \in \prod_{j \in \mathcal{J}} C_c^\infty(0,1;\K^{d_j})$ such that $\| x_1 - (x - x_0) \|_X = \| (x_0 + x_1) - x \|_X \leq \varepsilon$, i.e.\ we find that $\hat x_2 := (x_0 + x_1, x_c) \in \dom(\hat A)$ with $\| (x,x_c) - \hat x_2 \|_{\hat X} \leq \varepsilon$, i.e.\ $\dom(\hat A)$ is dense in $\hat X$.\newline
  It remains to show that $\operatorname{ran}\, (\lambda - \hat A) = \hat X$ for some $\lambda > 0$.
  Here, we take $\lambda > 0$ large enough such that $\lambda \in \rho(A_c)$, i.e.\ $(\lambda - A_c)^{-1} \in \B(X_c)$ exists.
  (Note that $X_c$ is finite dimensional, hence such a choice is always possible.)
  Take $(f,f_c) \in \hat X$.
  We need to find $(x, x_c) \in \dom(\hat A)$ such that $(\lambda I - \hat A)(x,x_c) = (f,f_c)$, i.e.\ $(x, x_c) \in \dom(\mathfrak{A}) \times X_c$ such that $(\lambda I - \mathfrak{A}) x = f$, $(\lambda - A_c) x_c - B_c \mathfrak{C} x = f_c$ and
   \[
    \mathfrak{B} x
     = - (C_c x_c + D_c \mathfrak{C} x)
   \]
  Since $\lambda \in \rho(A_c)$, this means that in particular $x_c \in X_c$ is given by
   \[
    x_c
     = (\lambda - A_c)^{-1} (f_c + B_c \mathfrak{C} x)
   \]
  and the interconnection condition then reads
   \begin{align*}
    &\mathfrak{B} x
     = - C_c (\lambda - A_c)^{-1} f_c - (C_c (\lambda - A_c)^{-1} B_c + D_c) \mathfrak{C} x
     \\
    &\Leftrightarrow \quad
    \mathfrak{B}_{\mathrm{cl}} x
     := \mathfrak{B} x + (C_c (\lambda - A_c)^{-1} B_c + D_c) \mathfrak{C} x
     = - C_c (\lambda - A_c)^{-1} f_c
     =: \tilde f_c.
   \end{align*}
  Just as in the single port-Hamiltonian system case, the boundary operator $\mathfrak{B}_{\mathrm{cl}} \in \B(\dom(\mathfrak{A}); U)$ has a right-inverse $B_{\mathrm{cl}} \in \B(U; \dom(\mathfrak{A}))$, so we may set
   \[
    x_{\mathrm{new}}
     := x - B_{\mathrm{cl}} \tilde f_c
   \]
  which, therefore, has to be a solution of the problem
   \begin{align*}
    (\lambda I - \mathfrak{A}) x_{\mathrm{new}}
     &= f - (\lambda I - \mathfrak{A}) B_{\mathrm{cl}} \tilde f_c
     =: \tilde f
     \\
    \mathfrak{B}_{\mathrm{cl}} x_{\mathrm{new}}
     &= \mathfrak{B}_{\mathrm{cl}} x - \mathfrak{B}_{\mathrm{cl}} B_{\mathrm{cl}} \tilde f_c
     = 0     
   \end{align*}
  To show that this problem has a (unique) solution, we show that the operator $A_{\mathrm{cl}} := \mathfrak{A}_{\mathrm{cl}}|_{\ker \mathfrak{B}_{\mathrm{cl}}}$ is dissipative and hence generates a strongly continuous contraction semigroup on $\hat X$, in particular, $x_{\mathrm{new}} = (\lambda - A_{\mathrm{cl}})^{-1} \tilde f$.
  In fact, for any $x \in \dom(A_{\mathrm{cl}})$, set $x_c := (\lambda - A_c)^{-1} B_c \mathfrak{C} x \in X_c$.
  Then
   \[
    \mathfrak{B} x + C_c x_c + D_c \mathfrak{C} x
     = \mathfrak{B} x + (C_c (\lambda - A_c)^{-1} B_c + D_c) \mathfrak{C} x
     = \mathfrak{B}_{\mathrm{cl}} x
     = 0
   \]
  so that $(x, x_c) \in \dom(\hat A)$ and hence
   \begin{align*}
    &\Re \sp{A_{\mathrm{cl}} x}{x}_X
     \\
     &= \Re \sp{\mathfrak{A} x}{x}_X
     = \Re \sp{\hat A(x,x_c)}{(x,x_c)}_{\hat X}
      - \Re \sp{A_c x_c + B_c \mathfrak{C} x}{x_c}_{X_c}
      \\
     &\leq - \Re \sp{A_c x_c + B_c \mathfrak{C} x}{x_c}_{X_c}
     \\
     &= - \Re \sp{A_c (\lambda - A_c)^{-1} B_c \mathfrak{C} x + (\lambda - A_c)(\lambda - A_c)^{-1} B_c \mathfrak{C} x}{(\lambda - A_c)^{-1} B_c \mathfrak{C} x}_{X_c}
     \\
     &= - \lambda \| (\lambda - A_c)^{-1} B_c \mathfrak{C} x \|_{X_c}^2
     \leq 0.
   \end{align*}
  This shows that $A_{\mathrm{cl}}$ is dissipative and by Proposition \ref{prop:gen_static_feedback} above $A_{\mathrm{cl}}$ generates a strongly continuous contraction semigroup on $X$, in particular $(0, \infty) \subseteq \rho(A_{\mathrm{cl}})$ and hence $x_{\mathrm{new}} = (\lambda - A_{\mathrm{cl}})^{-1} \tilde f$.
  Putting everything together, we obtain the desired $(x, x_c) \in \dom(\hat A)$ by solving the problem $(\lambda - \hat A) (x,x_c) = (f,f_c)$ as
   \begin{align*}
    x
     &= x_{\mathrm{new}} + B_{\mathrm{cl}} \tilde f_c
     = (\lambda - A_{\mathrm{cl}})^{-1} \tilde f + B_{\mathrm{cl}} \tilde f_c
     \\
     &= (\lambda - A_{\mathrm{cl}})^{-1} (f - (\lambda - \mathfrak{A}) B_{\mathrm{cl}} \tilde f_c) + B_{\mathrm{cl}} \tilde f_c,
     \\
    x_c
     &= (\lambda - A_c)^{-1} (f_c + B_c \mathfrak{C} x)
     \\
     &=  (\lambda - A_c)^{-1} (f_c + B_c \mathfrak{C} ((\lambda - A_{\mathrm{cl}})^{-1} (f - (\lambda - \mathfrak{A}) B_{\mathrm{cl}} \tilde f_c) + B_{\mathrm{cl}} \tilde f_c)).
   \end{align*}
  The operator $\lambda - \hat A$ therefore is surjective and the Lumer-Phillips Theorem provides the characterisation of the generator property.
  The compactness of the resolvent follows for generators $\hat A$ since $\dom(\hat A) \subset \dom(\mathfrak{A}) \times X_c$, where $\dom(\mathfrak{A}) = \prod_{j \in \mathcal{J}} \dom(\mathfrak{A}_j)$ is relatively compact as a product of relatively compact (in $X_j$) spaces $\dom(\mathfrak{A}_j)$ (by the Rellich-Kondrachev theorem, see e.g.\ Theorem 8.9 in \cite{LiebLoss_2001}; all spaces shall be equipped with their respective graph norms), and $X_c$ is finite dimensional, so compactly embedded into itself.
  \end{proof}
 
 Similar to the case of Dirac structures, where an interconnection of Dirac structures is a Dirac structure again, the interconnection of port-Hamiltonian systems in boundary control and observation form defines a boundary control and observation system.
 
 \begin{definition}
  \label{def:hybrid_PH-ODE}
  For a system as above consisting of a family of port-Hamiltonian systems $\mathfrak{S}^j$ and finite dimensional control system $\Sigma_c^j$, we may also introduce external inputs and outputs by setting
   \[
    \hat u = \hat {\mathfrak{B}} \hat x = \mathfrak{B} x + C_c x_c + D_c \mathfrak{C} x \in U,
     \quad
    \hat y = \hat {\mathfrak{C}} \hat x = \mathfrak{C} x \in Y,
     \quad
     (x, x_c) \in \dom(\mathfrak{A}) \times X_c,
   \]
  see Figure \ref{fig:diagram_network_phs}.
  \begin{figure}
	\centering
	\includegraphics[scale = 1.5]{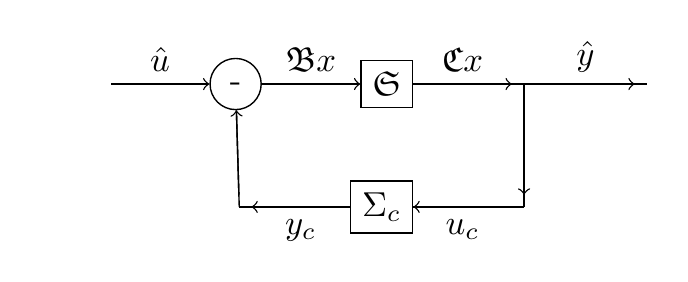}
	\caption{A system of port-Hamiltonian type $\mathfrak{S}$ coupled with a finite-dimensional controller $\Sigma_c$ and external input $\hat u$ and output $\hat y$.}
	\label{fig:diagram_network_phs}
 \end{figure}
 Moreover, we define the triple $\hat{\mathfrak{S}} = (\hat {\mathfrak{A}}, \hat {\mathfrak{B}}, \hat {\mathfrak{C}})$ with
  \begin{align*}
   \hat {\mathfrak{A}} (x, x_c)
    &= \left[ \begin{array}{cc} \mathfrak{A} & 0 \\ B_c \mathfrak{C} & A_c \end{array} \right] (x, x_c)
    \\
   \dom(\hat {\mathfrak{A}})
    &= \dom(\mathfrak{A}) \times X_c.
  \end{align*}
 In the following we call $\hat {\mathfrak{S}}$ an (open-loop) \emph{hybrid port-Hamiltonian system}.
 Note that $\hat A = \hat {\mathfrak{A}}|_{\ker \hat {\mathfrak{B}}}$.
  More generally, we also call $\hat {\mathfrak{S}} = (\hat {\mathfrak{A}}, \hat {\mathfrak{B}}, \hat {\mathfrak{C}})$ an (open-loop) \emph{hybrid port-Hamiltonian system}, if
   \[
    \left( \begin{array}{c} \hat {\mathfrak{B}} \\ \hat {\mathfrak{C}} \end{array} \right) \hat x
     = \hat W \left( \begin{array}{c} \mathfrak{B} x + C_c x_c + D_c \mathfrak{C} x \\ \mathfrak{C} x \end{array} \right),
     \quad
     \hat x \in \dom(\hat {\mathfrak{A}})
   \]
  for some invertible matrix $\hat W \in \B(U \times Y)$.
 \end{definition}

  These input and output maps $\hat {\mathfrak{B}}$ and $\hat {\mathfrak{C}}$ may then be used to interconnect several of such hybrid PDE-ODE systems $\mathfrak{S}$ with each other.
  As each of such systems consists of infinite-dimensional port-Hamiltonian systems on an interval and finite dimensional control systems, the interconnection of such hybrid systems then again generates a contractive $C_0$-semigroup if and only if the interconnection makes the total system dissipative.
 Therefore, with respect to well-posedness such a point of view does not give more information than just considering the system of these hybrid PH-ODE systems as one large hybrid PH-ODE system. In the next section, however, we exploit structural conditions on the arrangement of such a system to deduce better stability results, i.e.\ stability under less restrictive conditions.
 
 \begin{remark}
  If one chooses $\hat W = I$ in the above definition of an open-loop hybrid PH-ODE system, and additionally all port-Hamiltonian systems $\mathfrak{S}^j = (\mathfrak{A}^j, \mathfrak{B}^j, \mathfrak{C}^j)$ and the linear controller $\Sigma_c = (A_c, B_c, C_c, D_c)$ are impedance passive, then the triple $\mathfrak{S} = (\hat {\mathfrak{A}}, \hat {\mathfrak{B}}, \hat {\mathfrak{C}})$ is impedance passive as well, since for all $\hat x \in \dom(\hat {\mathfrak{A}})$ one has
   \begin{align*}
    \Re \sp{\hat {\mathfrak{A}} \hat x}{\hat x}_{\hat X}
     &= \Re \sp{\mathfrak{A} x}{x}_X + \Re \sp{A_c x_c + B_c \mathfrak{C} x}{x_c}_{X_c}
     \\
     &\leq \Re \sp{\mathfrak{B} x}{\mathfrak{C} x}_{U}
      + \Re \sp{C_c x_c + D_c \mathfrak{C} x}{\mathfrak{C} x}_{U_c}
      \\
     &= \Re \sp{(\mathfrak{B} + D_c \mathfrak{C}) x + C_c x_c}{\mathfrak{C} x}_U
     = \Re \sp{\hat {\mathfrak{B}} \hat x}{\hat {\mathfrak{C}} \hat x}_{U}.
   \end{align*}
 \end{remark}

\section{Stability Properties of Hybrid Multi-PHS-control systems}
\label{sec:hybrid_multi_phs}

Let us take the operator $\hat A$ from the previous section, i.e.\
 \begin{align*}
  \hat A \hat x
   &= (\mathfrak{A} x, A_c x_c + B_c \mathfrak{C} x)
   = ((\mathfrak{A}^j x^j)_{j \in \mathcal{J}}, (A_c^j x_c^j + B_c^j \mathfrak{C} x)_{j \in \mathcal{J}_c}),
   \\
  \dom(\hat A)
   &= \{ \hat x = (x, x_c) \in \dom(\mathfrak{A}) \times X_c: \quad \mathfrak{B} x = - (C_c x_c + D_c \mathfrak{C} x) \}
 \end{align*}
 in particular we assume $U_c = Y_c = U = Y$ and $E_c = I$, $E = I$.
 Stability, as for single port-Hamiltonian operators, is much more involved than the generation property.
  \begin{proposition}
   \label{prop:strict_dissipation_N_j=1}
   Let $\hat A$ be as in Theorem \ref{thm:generation_network-dynamic} with port-Hamiltonian order $N^j = 1$ for all $j \in \mathcal{J}$ and assume that the Hamiltonian density matrix functions $\H^j: [0,1] \rightarrow \K^{d^j \times d^j}$ are Lipschitz continuous for all $j \in \mathcal{J}$. If
    \[
     \Re \sp{\hat A \hat x}{\hat x}_{\hat X}
      \lesssim - \sum_{j=1}^m \abs{(\H^j x^j)(0)}^2,
      \quad
      \hat x \in \dom(\hat A)
    \]
   and $\sigma_p(A_c) \subseteq \C_0^-$, then the $C_0$-semigroup $(\hat T(t))_{t \geq 0}$ generated by $\hat A$ is uniformly exponentially stable.
  \end{proposition}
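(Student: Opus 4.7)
The plan is to apply the Gearhart--Pr\"uss--Huang Theorem via the resolvent characterisation recalled in Remark~\ref{rem:GPH}. Theorem~\ref{thm:generation_network-dynamic} already gives that $\hat A$ is a contraction semigroup generator with compact resolvent, so its spectrum is a discrete subset of the closed left half-plane; it remains to check that $\ii\R\subseteq\rho(\hat A)$ and that $\sup_{\beta\in\R}\|(\ii\beta-\hat A)^{-1}\|_{\B(\hat X)}<\infty$.

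First I would exclude imaginary eigenvalues. Assume $\hat A\hat x=\ii\beta\hat x$ with $\beta\in\R$ and $\hat x=(x,x_c)\in\dom(\hat A)$. Taking real parts gives $\Re\sp{\hat A\hat x}{\hat x}_{\hat X}=0$, so the strict dissipation hypothesis forces $(\H^j x^j)(0)=0$ for every $j\in\mathcal{J}$. For $N^j=1$ the eigenvalue relation $\mathfrak A^j x^j=\ii\beta x^j$ is a first order linear ODE for $\H^j x^j$ with invertible leading coefficient $P^j_1$ and Lipschitz continuous coefficients; by ODE uniqueness combined with the vanishing boundary datum at $\z=0$ this yields $x^j\equiv 0$, and in particular $\mathfrak C^j x^j=0$ and $B_c\mathfrak C x=0$. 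Hence $A_c x_c=\ii\beta x_c$, and $\sigma_p(A_c)\subseteq\C_0^-$ forces $x_c=0$. Combined with the compact resolvent this gives $\ii\R\subseteq\rho(\hat A)$.

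For the uniform bound I would use the sequential form of Remark~\ref{rem:GPH}: let $(\hat x_n,\beta_n)\subseteq\dom(\hat A)\times\R$ with $\|\hat x_n\|_{\hat X}\leq C$, $|\beta_n|\to\infty$, and $f_n:=\hat A\hat x_n-\ii\beta_n\hat x_n\to 0$ in $\hat X$, and show $\hat x_n\to 0$. Taking the real part of $\sp{f_n}{\hat x_n}_{\hat X}$ and inserting the dissipation estimate gives $(\H^j x_n^j)(0)\to 0$ for every $j$. Each PDE component then satisfies a first order port-Hamiltonian resolvent equation
\[
 \mathfrak A^j x_n^j-\ii\beta_n x_n^j = f_n^j\to 0\quad\text{in }X^j,
\]
with Lipschitz coefficients, $X^j$-bounded solution, vanishing trace at $\z=0$ and $|\beta_n|\to\infty$. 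The resolvent equation does not couple the subsystems (the coupling sits in the boundary relation, which has already been absorbed into the dissipation estimate), so the frequency-domain high-frequency argument of \cite{VillegasEtAl_2009},\cite{AugnerJacob_2014} for a single first order port-Hamiltonian system applies separately on each interval and yields $x_n^j\to 0$ in $X^j$. In particular $\mathfrak C x_n\to 0$, and the controller component satisfies $(\ii\beta_n-A_c)x_{c,n}=-B_c\mathfrak C x_n-(f_n)_c\to 0$; since $A_c$ is a bounded operator on the finite dimensional space $X_c$ with $\sigma(A_c)=\sigma_p(A_c)\subseteq\C_0^-$, the norms $\|(\ii\beta-A_c)^{-1}\|_{\B(X_c)}$ are bounded uniformly in $\beta\in\R$, so $x_{c,n}\to 0$ as well.

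The main obstacle is the high-frequency step that passes from boundary trace convergence $(\H^j x_n^j)(0)\to 0$ to norm convergence $x_n^j\to 0$ in $X^j$. This is precisely the place where the assumptions $N^j=1$ and Lipschitz $\H^j$ are essential: the proof works by rewriting the resolvent equation as a first order ODE in $\H^j x_n^j$, diagonalising $P^j_1$ to decouple the characteristic components, and propagating the smallness of the trace at $\z=0$ to the whole interval via an estimate that is insensitive to $|\beta_n|\to\infty$ because the oscillatory factor cancels along characteristics. Everything else, including the subsequent handling of the finite dimensional controller through the spectral assumption on $A_c$, is then essentially a bounded perturbation argument.
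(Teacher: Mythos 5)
Your overall strategy is the right one and is in fact the one the paper relies on: the paper's own proof of Proposition \ref{prop:strict_dissipation_N_j=1} is a one-line reference to Corollary 3.10 of \cite{AugnerJacob_2014}, and that corollary rests on exactly the frequency-domain route you describe (Arendt--Batty--Lyubich--V\~u plus Gearhart--Pr\"uss--Huang, with the $N=1$, Lipschitz-$\H$ propagation argument supplying properties ASP and AIEP for each pair $(\mathfrak{A}^j,(\H^j\cdot)(0))$; cf.\ Lemma \ref{lem:ASP_AIEP_N=1} and Theorem \ref{thm:stability} in this paper). Your eigenvalue exclusion, the extraction of $(\H^j x_n^j)(0)\rightarrow 0$ from the dissipation estimate, and the treatment of $A_c$ via uniform boundedness of $(\ii\beta-A_c)^{-1}$ on the finite-dimensional space $X_c$ are all correct.

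There is, however, one step that does not follow as written: from ``$x_n^j\rightarrow 0$ in $X^j$'' you conclude ``in particular $\mathfrak{C}x_n\rightarrow 0$''. The output map $\mathfrak{C}^j$ is built from the boundary trace $\tau^j(\H^j x_n^j)$, which is not continuous with respect to the $L_2$-norm, so convergence in $X^j$ alone gives no information about $\mathfrak{C}^j x_n^j$; and you do need control of $B_c\mathfrak{C}x_n$ to make the controller component $x_{c,n}$ vanish. This is precisely why the paper distinguishes plain AIEP from the strengthened property $\mathrm{AIEP}_{\tau\circ\H}$. The gap is closable with the machinery you already invoke: the same sideways/multiplier estimate that yields $x_n^j\rightarrow 0$ from $(\H^j x_n^j)(0)\rightarrow 0$ also yields $(\H^j x_n^j)(1)\rightarrow 0$ (Lemma \ref{lem:ASP_AIEP_N=1}(2)), hence $\tau^j(\H^j x_n^j)\rightarrow 0$ and $\mathfrak{C}x_n\rightarrow 0$. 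Alternatively, the crude bound $\norm{\H^j x_n^j}_{H^1}\lesssim\abs{\beta_n}$ gives $\abs{\mathfrak{C}x_n}\lesssim\abs{\beta_n}^{1/2}$, while $\norm{(\ii\beta_n-A_c)^{-1}}_{\B(X_c)}\lesssim\abs{\beta_n}^{-1}$, which already forces $x_{c,n}\rightarrow 0$. Either repair should be stated explicitly; as it stands, the inference from $L_2$-convergence to trace convergence is a non sequitur.
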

 
 \begin{proof}
 This result already follows from Corollary 3.10 in \cite{AugnerJacob_2014}.
 \end{proof}
 
 Note that the condition imposed in Proposition \ref{prop:strict_dissipation_N_j=1} on the interconnection is by far too restrictive for complex systems consisting of several subsystems of infinite-dimensional port-Hamiltonian type and finite-dimensional control systems: All port-Hamiltonian subsystems have to be interconnected in a way that they dissipate energy at the boundary, and all control systems have to be internally stable.
  The result does in no way require any special structure for the interconnection of the port-Hamiltonian systems, whereas for systems which interconnection structure forms a special class of graphs much less restrictive condition on the dissipative terms can be expected.

In the following, we restrict ourselves to impedance passive port-Hamiltonian systems and strictly input passive control systems as follows.
 \begin{assumption}
  \label{assmpt:hybrid}
  We assume that the following hold.
   \begin{enumerate}
    \item
     $\mathfrak{S} = (\mathfrak{A}, \mathfrak{B}, \mathfrak{C})$ is \emph{impedance passive}, i.e.\ 
      \[
       \Re \sp{\mathfrak{A} x}{x}_X
        \leq \Re \sp{\mathfrak{B} x}{\mathfrak{C} x}_U
         - \norm{\mathfrak{R} x}_Z^2,
         \quad
         x \in \dom(\mathfrak{A})
      \]
     for some linear operator $\mathfrak{R}: \dom(\mathfrak{R}) = \dom(\mathfrak{A}) \subseteq X \rightarrow Z$ and some Hilbert space $Z$,
    \item
     $\Sigma_c = (A_c, B_c, C_c, D_c)$ is \emph{strictly input passive}, more precisely, there is an orthogonal projection $\Pi: U_c \rightarrow U_c$ such that
      \[
       \ker \Pi
        = \ker D_c
        \subseteq \ker B_c
      \]
     and for some $\kappa > 0$
      \[
       \Re \sp{A_c x_c + B_c u_c}{x_c}_{X_c}
        \leq \Re \sp{C_c x_c + D_c u_c}{u_c}_{U_c}
         - \kappa \abs{\Pi u_c}_{U_c}^2,
         \quad
         x_c \in X_c, \, u_c \in U_c,
      \]
    \item
     $\sigma_p(A_c) \subseteq \C_0^-$, i.e.\ $(\ee^{t A_c})_{t \geq 0}$ is uniformly exponentially stable on the finite dimensional space $X_c$, and
    \item
     there are linear operators $\mathfrak{R}^j: \dom(\mathfrak{R}^j) = \dom(\mathfrak{A}^j) \subseteq X^j \rightarrow Z^j$ (for some Hilbert spaces $Z^j$), $j \in \mathcal{J}$, such that
      \[
       \norm{\mathfrak{R} x}_Z^2 + \abs{\Pi \mathfrak{C} x}_U^2 + \abs{\mathfrak{B} x}_Y^2
        \geq \sum_{j \in \mathcal{J}} \norm{\mathfrak{R}^j x^j}_{Z^j}^2,
        \quad
        x \in \dom(\mathfrak{A}).
      \]
   \end{enumerate}
 \end{assumption}

 \begin{remark}
  Note that, as a consequence of Assumption \ref{assmpt:hybrid},
   \[
    \Re \sp{\hat A \hat x}{\hat x}_{\hat X}
     \leq - \norm{\mathfrak{R} x}_Z^2 - \kappa \abs{\Pi \mathfrak{C} x}_Y^2,
     \quad
     \hat x \in \dom(\hat A).
   \]
  Moreover, $\ker D_c \subseteq \ker C_c^*$.
 \end{remark}
 \begin{proof}
  The first assertion directly follows from impedance passivity and standard feedback interconnection.
  Let us shhow that $\ker D_c \subseteq \ker C_c^*$.
  Take $u_c \in \ker D_c \subseteq \ker B_c$.
  Then, from the impedance passivity of $\Sigma_c$, we have for all $x_c \in X_c$ that
   \begin{align*}
    \Re \sp{A_c x_c}{x_c}_{X_c}
     &= \sp{A_c x_c + B_c u_c}{x_c}_{X_c}
     \\
     &\leq \Re \sp{C_c x_c + D_c u_c}{u_c}_{U_c}
     = \Re \sp{x_c}{C_c^* u_c}_{U_c}.
   \end{align*}
  Since this inequality holds for all $x_c \in X_c$, we deduce that $C_c^* u_c \in X_c^\bot = \{0\}$.
 \end{proof}
 
 To relate stability properties of the interconnected system, i.e.\ the $C_0$-semigroup $(\hat T(t))_{t \geq 0}$ with structural and damping properties of the involved port-Hamiltonian subsystems, let us introduce the following notions:
 properties ASP and AIEP (which have already been used in the research article \cite{AugnerJacob_2014}), as well as property $\mathrm{AIEP}_S$ (which is a slight modification of property AIEP).

 \begin{definition}
  Let $B: \dom(B) \subseteq H_1 \rightarrow H_1$ be a closed linear operator and $R \in \B(\dom(B); H_2)$, $S \in \B(\dom(B); H_3)$ for Hilbert spaces $H_1$, $H_2$ and $H_3$, and where $\dom(B)$ is equipped with its graph norm. We then say that the pair $(B, R)$ has property
   \begin{enumerate}
    \item
     \ldots ASP, if $\ker (\ii \beta - B) \cap \ker R = \{0\}$ for all $\beta \in \R$, i.e.\
      \[
       \ii \beta x = B x \quad \text{and} \quad R x = 0
        \quad \Rightarrow \quad
        x = 0.
      \]
    \item
     \ldots AIEP, if for all sequences $(x_n, \beta_n)_{n \geq 1} \subseteq \dom(B) \times \R$ with $\sup_{n \geq 1} \norm{x_n} < \infty$ and $\abs{\beta_n} \rightarrow \infty$,
      \[
       \ii \beta_n x_n - B x_n \rightarrow 0 \quad \text{and} \quad R x_n \rightarrow 0
        \quad \Rightarrow \quad
        x_n \rightarrow 0 \quad \text{in } H_1.
      \]
    \item
     \ldots $\mathrm{AIEP}_S$, if for all sequences $(x_n, \beta_n)_{n \geq 1} \subseteq \dom(B) \times \R$ with $\sup_{n \geq 1} \norm{x_n} < \infty$ and $\abs{\beta_n} \rightarrow \infty$,
      \[
       \ii \beta_n x_n - B x_n \rightarrow 0 \quad \text{and} \quad R x_n \rightarrow 0
        \quad \Rightarrow \quad
        x_n \rightarrow 0 \quad \text{in } H_1
        \quad \text{and} \quad
        S x_n \rightarrow 0 \quad \text{in } H_3.
      \]
   \end{enumerate}
 \end{definition} 

 With these abstract notions at hand, we can formulate the following stability results. 
 
 \begin{theorem}[Stability properties]
  \label{thm:stability}
  Assume that $\hat A$ satisfies Assumption \ref{assmpt:hybrid}.
   \begin{enumerate}
    \item
     If all pairs $(\mathfrak{A}^j, \mathfrak{R}^j)$, $j \in \mathcal{J}$, have property ASP, then the $C_0$-semigroup $(\hat T(t))_{t \geq 0}$ generated by $\hat A$ is (asymptotically) strongly stable.
    \item
     If $(\hat T(t))_{t \geq 0}$ is asymptotically stable and all pairs $(\mathfrak{A}^j, \mathfrak{R}^j)$ have property AIEP, then $(\hat T(t))_{t \geq 0}$ is uniformly exponentially stable.
    \item
     If all pairs $(\mathfrak{A}^j, \mathfrak{R}^j)$ have property $\mathrm{AIEP}_{\tau^j \circ \H^j}$, then the pair
      \[
       \left( \left[ \begin{smallmatrix} \mathfrak{A} & 0 \\ B_c \mathfrak{C} & A_c \end{smallmatrix} \right], \mathfrak{B} x + D_c \mathfrak{C} x + C_c x_c \right)
      \]
     has property $\mathrm{AIEP}_{\tau \circ \H}$ as well, where $\tau (\H x) = (\tau^j (\H_j x^j))_{j \in \mathcal{J}})$.
   \end{enumerate}
 \end{theorem}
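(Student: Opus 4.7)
The plan is to handle the three parts in order: parts~(1) and~(2) via dissipation and the Arendt--Batty--Lyubich--V\~u and Gearhart--Pr\"uss--Huang theorems (the latter in the sequential form of Remark \ref{rem:GPH}); part~(3) by a rescaling argument that reduces the open-loop situation to that of part~(2). The recurrent tools are: (i) the estimate $\Re \sp{\hat A \hat x}{\hat x}_{\hat X} \leq -\norm{\mathfrak{R} x}_Z^2 - \kappa \abs{\Pi \mathfrak{C} x}_U^2$ noted after Assumption \ref{assmpt:hybrid}; (ii) the structural lower bound $\norm{\mathfrak{R} x}^2 + \abs{\Pi \mathfrak{C} x}^2 + \abs{\mathfrak{B} x}^2 \geq \sum_j \norm{\mathfrak{R}^j x^j}^2$ from Assumption \ref{assmpt:hybrid}(4); (iii) compactness of the resolvent of $\hat A$ (Theorem \ref{thm:generation_network-dynamic}); and (iv) the fact that in finite dimension $\sigma(A_c) \subseteq \C_0^-$ implies $\sup_{\beta \in \R} \norm{(\ii \beta - A_c)^{-1}} < \infty$.

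For (1), I would take an eigenfunction $\hat A \hat x = \ii \beta \hat x$ with $\beta \in \R$. Since $\Re \sp{\hat A \hat x}{\hat x}_{\hat X} = 0$, (i) forces $\mathfrak{R} x = 0$ and $\Pi \mathfrak{C} x = 0$; the inclusions $\ker \Pi = \ker D_c \subseteq \ker B_c$ then give $D_c \mathfrak{C} x = B_c \mathfrak{C} x = 0$. The second component of the eigenvalue equation reduces to $A_c x_c = \ii \beta x_c$; by the hypothesis $\sigma_p(A_c) \subseteq \C_0^-$ this forces $x_c = 0$, and the boundary condition collapses to $\mathfrak{B} x = 0$. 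Now (ii) yields $\mathfrak{R}^j x^j = 0$ for every $j$, and ASP of each pair $(\mathfrak{A}^j, \mathfrak{R}^j)$ concludes $x^j = 0$. Together with (iii) and boundedness of the semigroup, Arendt--Batty--Lyubich--V\~u gives asymptotic strong stability. Part~(2) replays the same reasoning asymptotically along a sequence $(\hat x_n, \beta_n)$ as in Remark \ref{rem:GPH}: dissipation yields $\mathfrak{R} x_n \to 0$ and $\Pi \mathfrak{C} x_n \to 0$, hence $B_c \mathfrak{C} x_n \to 0$; the $x_c$-equation becomes $(\ii \beta_n - A_c) x_{c,n} \to 0$, and (iv) forces $x_{c,n} \to 0$; the boundary relation then produces $\mathfrak{B} x_n \to 0$; (ii) gives $\mathfrak{R}^j x^j_n \to 0$; and AIEP for each subsystem yields $x^j_n \to 0$.

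For (3) the hypothesis places $\hat x_n$ only in $\dom(\hat{\mathfrak A})$ and only ensures $\hat{\mathfrak B} \hat x_n \to 0$, so the hybrid impedance--passivity identity (cf.\ the remark following Definition \ref{def:hybrid_PH-ODE}) carries a boundary defect term:
\[
  \norm{\mathfrak{R} x_n}^2 + \kappa \abs{\Pi \mathfrak{C} x_n}^2
   \leq \Re \sp{\hat{\mathfrak B} \hat x_n}{\mathfrak{C} x_n}_U + o(1).
\]
To make the right-hand side vanish it suffices to control $\abs{\mathfrak{C} x_n}_U$, and obtaining this a priori bound is the main obstacle. I would achieve it by contradiction: pass to a subsequence with $\gamma_n := \abs{\mathfrak{C} x_n}_U \to \infty$ and rescale $y_n := x_n / \gamma_n$, $y_{c,n} := x_{c,n}/\gamma_n$. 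Every hypothesis of $\mathrm{AIEP}_S$ is preserved (both the residual $\ii \beta_n \hat x_n - \hat{\mathfrak A} \hat x_n$ and the boundary defect $\hat{\mathfrak B} \hat x_n$ are divided by $\gamma_n \to \infty$), while $(y_n, y_{c,n}) \to 0$ in $\hat X$ and $\abs{\mathfrak{C} y_n}_U = 1$. Running the argument of part~(2) on the rescaled sequence gives $\mathfrak{R}^j y^j_n \to 0$, and here the strengthened hypothesis $\mathrm{AIEP}_{\tau^j \circ \H^j}$ for each $j$ delivers $\tau^j(\H^j y^j_n) \to 0$; invertibility of $\left[ \begin{smallmatrix} W_B \\ W_C \end{smallmatrix} \right]$ then forces $\mathfrak{C} y_n \to 0$, contradicting $\abs{\mathfrak{C} y_n}_U = 1$. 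Once the bound on $\abs{\mathfrak{C} x_n}_U$ is established, the defect term vanishes in the limit, the argument of part~(2) applied to the original sequence yields $\hat x_n \to 0$, and one more application of $\mathrm{AIEP}_{\tau^j \circ \H^j}$ for each $j$ delivers the additional conclusion $\tau(\H x_n) \to 0$, which is exactly the $S$-part of $\mathrm{AIEP}_{\tau \circ \H}$ for the pair $(\hat{\mathfrak A}, \hat{\mathfrak B})$. The boundedness step is the place where the strengthening from AIEP to $\mathrm{AIEP}_S$ on the subsystems is genuinely needed.
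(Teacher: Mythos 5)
Your parts (1) and (2) are correct and follow essentially the same route as the paper: eigenfunctions on $\ii\R$ are killed by the dissipation estimate $\Re \sp{\hat A \hat x}{\hat x}_{\hat X} \leq -\norm{\mathfrak{R}x}_Z^2 - \kappa\abs{\Pi\mathfrak{C}x}^2$, the kernel inclusions $\ker\Pi = \ker D_c \subseteq \ker B_c$, Assumption \ref{assmpt:hybrid}(4) and ASP, followed by Arendt--Batty--Lyubich--V\~u; the exponential part runs the same chain along sequences as in Remark \ref{rem:GPH} with AIEP. Part (3) is where you genuinely diverge, and your argument is correct but organised differently. The paper proves, inside part (2), an open-loop statement \eqref{ast} in which the boundary defect $\mathfrak{B}x_n + D_c\mathfrak{C}x_n + C_c x_{c,n}$ is \emph{constrained to lie in} $\ran\left[\begin{smallmatrix} C_c & D_c\end{smallmatrix}\right]$; using $\ker D_c \subseteq \ker B_c \cap \ker C_c^*$ the pairing $\Re\sp{\text{defect}}{\mathfrak{C}x_n}$ then only sees $\Pi\mathfrak{C}x_n$, and the quadratic term $-\kappa\abs{\Pi\mathfrak{C}x_n}^2$ absorbs it by Young's inequality even when $\mathfrak{C}x_n$ is unbounded; part (3) is then a one-line corollary of \eqref{ast} with AIEP upgraded to $\mathrm{AIEP}_{\tau^j\circ\H^j}$ in the last step. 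You instead attack the literal statement with an arbitrary defect in $U$, where $(I-\Pi)\mathfrak{C}x_n$ is not controlled by the damping, and you restore control by the normalisation/contradiction device: rescale by $\gamma_n = \abs{\mathfrak{C}x_n}$, note that all $\mathrm{AIEP}$-hypotheses are homogeneous, and use $\mathrm{AIEP}_{\tau^j\circ\H^j}$ together with $\mathfrak{C}y_n = W_C\,\tau(\H y_n)$ to contradict $\abs{\mathfrak{C}y_n} = 1$. This costs a second invocation of the trace-strengthened property (the paper needs it only once, at the very end), but it buys the conclusion without the range restriction on the defect, i.e.\ it proves $\mathrm{AIEP}_{\tau\circ\H}$ exactly as defined for maps into $U$; the paper's \eqref{ast}-based proof, read literally, only covers perturbations of the closure relation lying in $\ran\left[\begin{smallmatrix} C_c & D_c\end{smallmatrix}\right]$. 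Both approaches are sound; be aware only that ``running the argument of part (2) on the rescaled sequence'' must be understood as the open-loop variant in which the pairing $\Re\sp{\hat{\mathfrak{B}}\hat y_n}{\mathfrak{C}y_n}$ vanishes because the defect is $o(1)$ \emph{and} $\abs{\mathfrak{C}y_n}$ is now bounded.
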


 \begin{proof}
 \begin{enumerate}
  \item
   We show strong stability by demonstrating that $\sigma_p(\hat A) \subseteq \C_0^-$, which by the Arendt-Batty-Lyubich-V\~u Theorem is enough for strong stability as $\hat A$ has compact resolvent.
   Clearly, since $\hat A$ is dissipative, we have $\sigma(\hat A) \subseteq \overline{\C_0^-}$, i.e.\ we only need to check that no $\ii \beta \in \ii \R$ is an eigenvalue of $\hat A$.
   Thus, let $\hat x = (x, x_c) \in \dom(\hat A)$ be such that $\hat A \hat x = \ii \beta \hat x$ for some $\beta \in \R$.
   Then, in particular
    \[
     \left( \begin{array}{c} \mathfrak{A} x \\ A_c x_c + B_c \mathfrak{C} x \end{array} \right)
      = \left( \begin{array}{c} \ii \beta x \\ \ii \beta x_c \end{array} \right)
      \quad \Rightarrow \quad
     \left( \begin{array}{c} (\mathfrak{A} - \ii \beta) x \\ x_c \end{array} \right)
      = \left( \begin{array}{c} 0 \\ (\ii \beta - A_c)^{-1} B_c \mathfrak{C} x \end{array} \right)
    \]
   (note that $\ii \R \subseteq \rho(A_c)$ by Assumption \ref{assmpt:hybrid}).
   Since $\hat x \in \dom(\hat A)$, we then have
    \[
     \mathfrak{B} x
      = - (C_c x_c + D_c \mathfrak{C} x)
      = - [ C_c (\ii \beta - A_c)^{-1} B_c + D_c] \mathfrak{C} x
    \]
   and from impedance passivity of $\mathfrak{S}$ and $\Sigma_c$, we obtain
    \begin{align*}
     0
      &= \Re \sp{\ii \beta x}{x}_X
      = \Re \sp{\mathfrak{A} x}{x}_X
      \\
      &\leq \Re \sp{\mathfrak{B} x}{\mathfrak{C} x}_{U}
      = - \Re \sp{(C_c (\ii \beta - A_c)^{-1} B_c + D_c) \mathfrak{C} x}{\mathfrak{C} x}_{Y}
      \\
      &\leq - \Re \sp{A_c (\ii \beta - A_c)^{-1} B_c \mathfrak{C} x + B_c \mathfrak{C} x}{(\ii \beta - A_c)^{-1} B_c \mathfrak{C} x}_{X_c} - \kappa \abs{\Pi \mathfrak{C} x}^2
      \\
      &= - \Re \sp{\ii \beta (\ii \beta - A_c)^{-1} B_c \mathfrak{C} x}{(\ii \beta - A_c)^{-1} B_c \mathfrak{C} x}_{X_c}  - \kappa \abs{\Pi \mathfrak{C} x}^2
      =  - \kappa \abs{\Pi \mathfrak{C} x}^2
      \leq 0.
    \end{align*}
   This chain of inequalities shows that $\Pi \mathfrak{C} x = 0$, hence $B_c \mathfrak{C} x = 0$ due to $\ker D_c \subseteq \ker B_c$, and then $x_c = (\ii \beta - A_c)^{-1} B_c \mathfrak{C} x = 0$ so that
    \[
     \mathfrak{B} x
      = - [ C_c (\ii \beta - A_c)^{-1} B_c + D_c] \mathfrak{C} x
      = 0.
    \]
   Moreover,
    \[
     0
      = \Re \sp{\ii \beta \hat x}{\hat x}_{\hat X}
      = \Re \sp{\hat A \hat x}{\hat x}_{\hat X}
      \leq - \norm{\mathfrak{R} x}_Z^2 - \kappa \abs{\Pi \mathfrak{C} x}^2
      \leq 0,
    \]
   so that $\mathfrak{R} x = 0$, $\mathfrak{B} x = 0$ and $\Pi \mathfrak{C} x = 0$, in particular $\mathfrak{R}^j x^j = 0$ for all $j \in \mathcal{J}$, and by property ASP of the pairs $(\mathfrak{A}^j, \mathfrak{R}^j)$ this implies that $x^j = 0$ for all $j \in \mathcal{J}$, but then $\mathfrak{C} x = 0$ as well as $x_c = 0$, i.e.\ $\hat x = 0$ and $\sigma_p(\hat A) \cap \ii \R = \emptyset$.
   Strong stability follows.
  \item
   For uniform exponential stability, we use the Gearhart-Pr\"uss-Huang Theorem, i.e.\ we show that $\sup_{\beta \in \R} \norm{(\ii \beta - \hat A)^{-1}}_{\B(\hat X)} < \infty$.
   By remark \ref{rem:GPH}, this property is equivalent to showing that for every sequence $(\hat x_n, \beta_n)_{n \geq 1} \subseteq \dom(\hat A) \times \R$ with $\sup_{n \in \N} \norm{\hat x_n}_{\hat X} < \infty$ and $\abs{\beta_n} \rightarrow \infty$ and $\hat A \hat x_n - \ii \beta_n \hat x_n$, we have $\hat x_n \rightarrow 0$ in $\hat X$.
   In view of the third assertion, we even show a little bit more, namely
    \[
     \left.
      \begin{array}{l}
      (\hat x_n)_{n \geq 1} \subseteq \dom(\mathfrak{A}) \times X_c, \, \sup_{n \in \N} \norm{\hat x_n}_{\hat X} < \infty \\
      (\beta_n)_{n \geq 1} \subseteq \R, \, \abs{\beta_n} \rightarrow \infty \\
      (\ii \beta_n - \mathfrak{A}) x_n \rightarrow 0 \, \text { in } X \\
      (\ii \beta_n - A_c) x_{c,n} - B_c \mathfrak{C} x_n \rightarrow 0 \, \text{ in } X_c \\
      \mathfrak{B} x_n + C_c x_{c,n} + D_c \mathfrak{C} x_n \rightarrow 0 \, \text{ in } \ran \left[ \begin{array}{cc} C_c & D_c \end{array} \right] \subseteq U
      \end{array}
     \right\}
      \quad \Rightarrow \quad
      \hat x_n \rightarrow 0 \, \text{ in } \hat X.
      \tag{$\ast$}
      \label{ast}
    \]
   Let $(\hat x_n, \beta_n)_{n \geq 1}$ be a sequence as on the left hand side.
   Using Assumption \ref{assmpt:hybrid}, we obtain that
    \begin{align*}
     0
      &\leftarrow \Re \sp{(\mathfrak{A} - \ii \beta_n) x_n}{x_n}_X
      = \Re \sp{\mathfrak{A} x_n}{x_n}_X
      \leq \Re \sp{\mathfrak{B} x_n}{ \mathfrak{C} x_n}_{U}
       - \norm{\mathfrak{R} x_n}_Z^2
       \\
     0
      &\leftarrow \Re \sp{(A_c - \ii \beta_n) x_{c,n} + B_c \mathfrak{C} x_n}{x_{c,n}}_{X_c}
      \leq \Re \sp{C_c x_{c,n} + D_c \mathfrak{C} x_n}{\mathfrak{C} x_n}
      - \kappa \abs{\Pi \mathfrak{C} x_n}^2
    \end{align*}
   and adding up these two inequalities we derive
    \[
     \liminf_{n \rightarrow \infty} \Re \sp{(\mathfrak{B} + D_c \mathfrak{C}) x_n + C_c x_{c,n}}{\mathfrak{C} x_n} - \norm{\mathfrak{R} x_n}_Z^2 - \kappa \abs{\Pi \mathfrak{C} x_n}^2
     \geq 0. 
    \]
   Now, since $\ker D_c \subseteq \ker B_c \cap \ker C_c^*$, and $(\mathfrak{B} + D_c \mathfrak{C}) x_n + C_c x_{c,n}$ by choice of the sequence, cf.\ \eqref{ast}, lies in $\ran  \left[ \begin{array}{cc} C_c & D_c \end{array} \right]$, this inequality is equivalent to the statement
    \[
     \liminf_{n \rightarrow \infty} \Re \sp{(\mathfrak{B} + D_c \mathfrak{C}) x_n + C_c x_{c,n}}{\Pi \mathfrak{C} x_n} - \norm{\mathfrak{R} x_n}_Z^2 - \kappa \abs{\Pi \mathfrak{C} x_n}^2
     \geq 0.
    \]
   Namely, for every $C_c \eta$, $D_c \mu$ one has
    \begin{align*}
     \sp{C_c \eta}{(I - \Pi) \mathfrak{C} x}
      &= \sp{\eta}{C_c^* (I - \Pi) \mathfrak{C} x}
      = 0
      \\
     \sp{D_c \mu}{(I - \Pi) \mathfrak{C} x}
      &= \sp{(I - \Pi) D_c \mu}{\mathfrak{C} x}
      = 0
    \end{align*}
   as $(I - \Pi)$ projects onto $\ker D_c$.
   Since $(\mathfrak{B} + D_c \mathfrak{C}) x_n + C_c x_{c,n} \rightarrow 0$ by \eqref{ast}, we then deduce that $\Pi \mathfrak{C} x_n \rightarrow 0$ and $\mathfrak{R} x_n \rightarrow 0$:
   \emph{Assume} that $\limsup_{n \rightarrow \infty} \abs{\Pi \mathfrak{C} x_n} > 0$. Deviding by $\abs{\Pi \mathfrak{C} x_n}$ for a suitable subsequence then gives
    \[
     \liminf_{n \rightarrow \infty} - \frac{\norm{\mathfrak{R} x_n}_Z^2}{\norm{\Pi \mathfrak{C} x_n}} - \kappa \abs{\Pi \mathfrak{C} x_n} \geq 0
    \]
   and $\limsup_{n \rightarrow\infty} \abs{\Pi \mathfrak{C} x_n} = 0$, a contradiction. Hence, $\lim_{n \rightarrow \infty} \abs{\mathfrak{C} x_n} = 0$ and then
    \[
     \liminf_{n \rightarrow \infty} - \norm{\mathfrak{R} x_n}_Z^2 = 0
    \]
   gives $\lim_{n \rightarrow \infty} \mathfrak{R} x_n = 0$ as well.
   Since $\ker \Pi \subseteq \ker B_c \cap \ker D_c$, this also implies that
    \[
     B_c \mathfrak{C} x_n, D_c \mathfrak{C} x_n
      \rightarrow 0 \, \text{ in } Y.
    \]
   Therefore,
    \[
     x_{c,n}
      = (\ii \beta_n - A_c)^{-1} \left[ B_c \mathfrak{C} x_n - (B_c \mathfrak{C} x_n + A_c x_{c,n} - \ii \beta_n x_{c,n}) \right]
      \rightarrow 0 \, \text{ in } X_c,
    \]
   using that $\sup_{\beta \in \R} \norm{ (\ii \beta - A_c)^{-1} }_{\B(X_c)} < \infty$ and both $B_c \mathfrak{C} x_n$ and $B_c \mathfrak{C} x_n + A_c x_{c,n} - \ii \beta_n x_{c,n}$ tend to zero.
   As a consequence, also
    \[
     \mathfrak{B} x_n
      = (\mathfrak{B} x_n + D_c \mathfrak{C} x_n + C_c x_{c,n}) - D_c \mathfrak{C} x_n - C_c x_{c,n}
      \rightarrow 0 \, \text{ in } U
    \]
   as all three summands converge to zero.
   Then
    \[
     \sum_{j \in \mathcal{J}} \norm{\mathfrak{R}^j x_n^j}_{Z_j}^2
      \leq \abs{\mathfrak{B} x_n}_U^2 + \abs{\Pi \mathfrak{C} x_n}_Y^2 + \norm{\mathfrak{R} x_n}_Z^2
      \rightarrow 0
      \quad \Rightarrow \quad
      \mathfrak{R}^j x_n^j \rightarrow 0, \quad j \in \mathcal{J}.
    \]
   Now, for every $j \in \mathcal{J}$, we have $(\mathfrak{A}^j - \ii \beta_n) x_n^j \rightarrow 0$ and $\mathfrak{R}^j x_n^j \rightarrow 0$, so that by property AIEP we obtain $x_n^j \rightarrow 0$ in $X^j$ for all $j \in \mathcal{J}$, i.e.\ $x_n \rightarrow 0$ in $X$ as well, i.e.\ $\hat x_n \rightarrow 0$ in $\hat X$.
    
   Next, let us show the assertion on uniform exponential stability.
   By the Gearhart-Greiner-Pr\"uss-Huang Theorem, we need to show that
    \[
     \left\{
      \begin{array}{l}
       (\hat x_n)_{n \geq 1} \subseteq \dom(\hat A),
        \, \sup_{n \in \N} \norm{\hat x_n}_{\hat X} < \infty
        \\
       (\hat \beta_n)_{n \geq 1} \subseteq \R,
        \, \abs{\beta_n} \rightarrow \infty
        \\
       (\hat A - \ii \beta_n) x_n
        \rightarrow 0 \text{ in } \hat X 
      \end{array}
       \quad
       \Rightarrow
       \quad
       \hat x_n \rightarrow 0 \text{ in } \hat X.
     \right.
    \]
   So let $(\hat x_n, \beta_n)_{n \geq 1} \subseteq \dom(\hat A) \times \R$ be such a sequence.
   Then, by dissipativity of $\hat A$ we have
    \[
     0
      \leftarrow \Re \sp{(\hat A - \ii \beta_n) \hat x_n}{\hat x_n}_{\hat X}
      = \Re \sp{\hat A \hat x_n}{\hat x_n}_{\hat X}
      \leq - \norm{\mathfrak{R} x_n}_Z^2 - \kappa \abs{\Pi \mathfrak{C} x_n}_Y^2
      \leq 0
    \]
   and therefore $\mathfrak{R} x_n \rightarrow 0$ and $\Pi \mathfrak{C} x_n \rightarrow 0$.
   Moreover, $(\mathfrak{B} + D_c \mathfrak{C}) x_n + C_c x_{c,n} = 0$ by definition of $\dom(\hat A)$ and $(\hat A - \ii \beta_n) \hat x_n \rightarrow 0$ means that in particular
     \[
      (\mathfrak{A} - \ii \beta_n) x_n \rightarrow 0,
       \quad
       (A_c - \ii \beta_n) x_{c,n} + B_c \mathfrak{C} x_n \rightarrow 0.
     \]
   By property \eqref{ast}, this means that $\hat x_n \rightarrow 0$ in $\hat X$ and uniform exponential stability follows.
  \item
   If for all $j \in \mathcal{J}$, we even have property $\mathrm{AIEP}_{\tau^j \circ \H^j}$, then for the sequence $(\hat x_n, \beta_n)_{n \geq 1}$ as in \eqref{ast} of the previous case we do not only have $x_n^j \rightarrow 0$, but also $\tau^j(\H^j x_n^j) \rightarrow 0$ for all $j \in \mathcal{J}$, so that the last assertion follows as well. 
 \end{enumerate}
 \end{proof}

\section{Networks of Hybrid PH-ODE Systems}
\label{sec:networks_hybrid_systems}

 Next, we want to exploit possible \emph{structural conditions} on the hybrid interconnected port-Hamiltonian-control system to have uniform exponential stability under more restrictive structural assumptions, but weaker assumptions on the dissipativity of the subsystems.
 Instead of viewing the system as a family of port-Hamiltonian systems $\mathfrak{S}^j$ which are coupled via boundary feedback and control with a finite-dimensional control system, we cluster the port-Hamiltonian systems and parts of the finite-dimensional control system into hybrid PH-ODE systems $\hat {\mathfrak{S}}^j$ ($j \in \hat {\mathcal{J}}$) as in Definition \ref{def:hybrid_PH-ODE} and assume that the resulting evolutionary system
 can be written in an equivalent \emph{serially connected} (or, maybe more precisely, \emph{rooted graph}) form
  \[
   \begin{cases}
    \frac{\dd}{\dd t} \hat x^j
     = \hat {\mathfrak{A}}^j \hat x^j
     := \hat {\mathfrak{A}}^j \hat x^j
     \\
    \hat {\mathfrak{B}}^j \hat x^j
     = \sum_{i \in \hat {\mathcal{J}}} \hat K^{ij} \hat {\mathfrak{C}}^i \hat x^i,
     \quad
     j \in \hat {\mathcal{J}}
   \end{cases}
  \]
 where $\dom(\hat {\mathfrak{A}}^j) = \dom(\hat {\mathfrak{C}}^j) = \dom(\hat {\mathfrak{B}}^j) = \dom(\hat {\mathfrak{A}}^j)$ and
  \[
   \hat {\mathfrak{B}}^j: \dom(\hat {\mathfrak{B}}^j) \subseteq \hat X \rightarrow \hat U^j,
    \quad
   \hat {\mathfrak{C}}^j: \dom(\hat {\mathfrak{C}}^j) \subseteq \hat X \rightarrow \hat Y^j
  \]
 and where the Hilbert spaces $\hat U^j$ and $\hat Y^j$ may be distinct, but $\hat U^j \times \hat Y^j \cong \hat U \times \hat Y$.
 Moreover, for this interconnection to be \emph{serial} (or, in \emph{rooted graph form}) we demand the following.
  \begin{assumption}
   \label{assmpt:serial}
   Assume that \ $\hat K = (\hat K^{ij})_{i,j \in \hat {\mathcal{J}}}$ \ is strictly lower-block triangular, i.e.\ $\hat K^{ij} = 0$ for $i,j \in \hat {\mathcal{J}}$ with $\ii \leq j$.
  \end{assumption}
 Under this assumption one can hope for better (i.e.\ less restrictive) conditions for asymptotic or uniform exponential stability, similar to the interconnection of a PHS with a finite dimensional control system.
  \begin{figure}
	\centering
	\includegraphics[scale = 0.8]{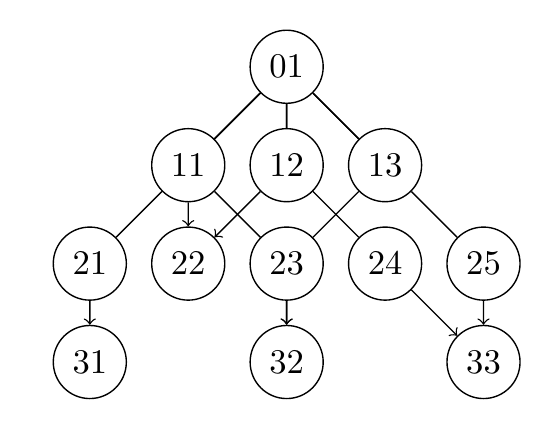}
	\caption{Example of a rooted graph.}
	\label{fig:diagram_spring-mass-beam-system}
 \end{figure}
 
 \begin{assumption}
  \label{assmpt:dissipation}
  There are linear maps $\hat {\mathfrak{R}}^j: \dom(\hat {\mathfrak{R}}^j) = \dom(\hat {\mathfrak{A}}^j) \rightarrow \hat {\mathcal{Z}}^j$ ($j \in \hat {\mathcal{J}}$) such that
   \[
    \Re \sp{\hat A \hat x}{\hat x}_{\hat X}
     \leq - \sum_{j \in \hat {\mathcal{J}}} \norm{\hat {\mathfrak{R}}^j \hat x^j}_{\hat {\mathcal{Z}}^j}^2,
     \quad
     \hat x \in \dom(\hat A). 
   \]
 \end{assumption}
 
 Under these two assumptions we can formulate the following
 
 \begin{theorem}[Asymptotic stability]
  \label{thm:asymptotic_stability}
  Let Assumptions \ref{assmpt:serial} and \ref{assmpt:dissipation} hold true.
  Assume that $\sigma(A_c) \subseteq \C_0^-$, i.e.\ $(\ee^{t A_c})_{t \geq 0}$ is an exponentially stable semigroup on $X_c$, and that for all $j \in \hat {\mathcal{J}}$ the pairs $(\hat {\mathfrak{A}}^j, (\hat {\mathfrak{B}}^j, \hat {\mathfrak{R}}^j))$ have property ASP, i.e.\
   \[
    \left.
    \begin{array}{l}
    \hat x^j \in \dom(\hat {\mathfrak{A}}^j) \\
    \beta \in \R \\
    \hat {\mathfrak{A}}^j \hat x^j = \ii \beta \hat x^j \\
    (\hat {\mathfrak{B}}^j \hat x^j, \hat {\mathfrak{R}}^j \hat x^j)
     = 0
    \end{array}
    \right\}
    \quad
    \Longrightarrow
    \quad
    \hat x^j = 0
    \tag{ASP}
   \]
  Then $\hat A$ generates an (asymptotically) strongly stable $C_0$-semigroup $(\hat T(t))_{t \geq 0}$ on $\hat X$.
 \end{theorem}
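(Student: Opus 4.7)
The plan is to apply the Arendt-Batty-Lyubich-V\~u Theorem. Since Theorem~\ref{thm:generation_network-dynamic} already guarantees that $\hat A$ generates a contraction $C_0$-semigroup on $\hat X$ with compact resolvent, strong stability reduces to verifying $\sigma_p(\hat A) \cap \ii \R = \emptyset$.

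So let $\hat x \in \dom(\hat A)$ with $\hat A \hat x = \ii \beta \hat x$ for some $\beta \in \R$. Taking the real part of the inner product with $\hat x$, dissipativity together with Assumption~\ref{assmpt:dissipation} yields
\[
 0 = \Re \sp{\hat A \hat x}{\hat x}_{\hat X} \leq - \sum_{j \in \hat{\mathcal{J}}} \norm{\hat {\mathfrak{R}}^j \hat x^j}_{\hat{\mathcal{Z}}^j}^2,
\]
so $\hat {\mathfrak{R}}^j \hat x^j = 0$ for every $j \in \hat{\mathcal{J}}$. After relabelling $\hat{\mathcal{J}} = \{1, \ldots, \hat m\}$, Assumption~\ref{assmpt:serial} becomes $\hat K^{ij} = 0$ for $i \leq j$, so the coupling reads $\hat {\mathfrak{B}}^j \hat x^j = \sum_{i > j} \hat K^{ij} \hat {\mathfrak{C}}^i \hat x^i$. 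I then proceed by backward induction on $j$. For $j = \hat m$ the sum is empty, so $\hat {\mathfrak{B}}^{\hat m} \hat x^{\hat m} = 0$; combined with $\hat {\mathfrak{R}}^{\hat m} \hat x^{\hat m} = 0$ and $\hat {\mathfrak{A}}^{\hat m} \hat x^{\hat m} = \ii \beta \hat x^{\hat m}$, property ASP for the pair $(\hat {\mathfrak{A}}^{\hat m}, (\hat {\mathfrak{B}}^{\hat m}, \hat {\mathfrak{R}}^{\hat m}))$ forces $\hat x^{\hat m} = 0$. In the inductive step, assuming $\hat x^i = 0$ for all $i > j$, linearity of the output maps $\hat {\mathfrak{C}}^i$ yields $\hat {\mathfrak{B}}^j \hat x^j = 0$, and ASP applied to the $j$-th pair again gives $\hat x^j = 0$. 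Hence $\hat x = 0$, no $\ii \beta$ is an eigenvalue of $\hat A$, and Arendt-Batty-Lyubich-V\~u delivers strong stability.

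The main delicate point I anticipate is the book-keeping required to split the global eigenvalue equation $\hat A \hat x = \ii \beta \hat x$ into the blockwise equations $\hat {\mathfrak{A}}^j \hat x^j = \ii \beta \hat x^j$ coupled via $\hat K$: one must ensure that the finite-dimensional controller state is consistently distributed over the hybrid blocks $\hat {\mathfrak{S}}^j$, so that the eigenvalue problem really decouples in the blockwise sense required by the induction. The hypothesis $\sigma(A_c) \subseteq \C_0^-$ enters exactly at this juncture as a safety net, guaranteeing that any purely controller-internal modes contribute no eigenvalues on $\ii \R$ outside the reach of the blockwise ASP property.
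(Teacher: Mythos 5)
Your proof is correct and follows essentially the same route as the paper: Arendt--Batty--Lyubich--V\~u plus compact resolvent reduces everything to excluding imaginary eigenvalues, the dissipation estimate of Assumption \ref{assmpt:dissipation} forces $\hat{\mathfrak{R}}^j \hat x^j = 0$, and the triangular coupling lets property ASP eliminate the blocks one by one by induction. The only cosmetic difference is that you induct backward from $j = \hat m$ (taking the stated condition ``$\hat K^{ij}=0$ for $i \leq j$'' literally) whereas the paper's proof writes the coupling as $\hat{\mathfrak{B}}^j \hat x^j = \sum_{i=1}^{j-1}\hat K^{ij}\hat{\mathfrak{C}}^i\hat x^i$ and inducts forward from $j=1$; this is just a relabelling of the ordering and does not change the argument.
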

  
 \begin{proof}
 We use the Arendt-Batty-Lyubich-V\~u Theorem again.
 Since $\hat A$ generates a contractive $C_0$-semigroup and has compact resolvent by Theorem \ref{thm:generation_network-dynamic}, we need to show that $\sigma_p(\hat A) \cap \ii \R = \emptyset$.
 Let $\hat x \in \dom(\hat A)$ such that $\hat A \hat x = \ii \beta \hat x$ for some $\beta \in \R$.
 Then, in particular
  \[
   0
    = \Re \sp{\ii \beta \hat x}{\hat x}_{\hat X}
    = \Re \sp{\hat A \hat x}{\hat x}_{\hat X}
    \leq - \sum_{j \in \hat {\mathcal{J}}} \norm{\hat {\mathfrak{R}}^j \hat x^j}_{\mathcal{Z}^j}^2
    \leq 0
  \]
 and therefore $\hat {\mathfrak{R}}^j \hat x^j = 0$ for all $j \in \hat {\mathcal{J}}$.
 Moreover, by definition of $\hat A$ and Assumption \ref{assmpt:serial}, we have
  \[
   \hat {\mathfrak{B}}^j \hat x^j
    = \sum_{i = 1}^{j-1} \hat K^{ij} \hat {\mathfrak{C}}^i \hat x^i,
    \quad
    j \in \hat {\mathcal{J}}
  \]
 Hence, whenever we know that $\hat x^i = 0$ for all $i < j$, then $(\hat {\mathfrak{R}}^j \hat x^j, \hat {\mathfrak{B}}^j \hat x^j) = 0$ and since also $\hat {\mathfrak{A}}^j \hat x^j = \ii \beta \hat x^j$, property ASP implies that then $\hat x^j = 0$ as well.
 Since this is certainly true for $j = 1$, it follows iteratively that $\hat x^j = 0$ for all $j \in \hat {\mathcal{J}}$, i.e.\ $\hat x = 0$ and therefore $\sigma_p(\hat A) \cap \ii \R = \emptyset$.
 The Arendt-Batty-Lyubich-V\~u Theorem gives us strong stability of the semigroup $(\hat T(t))_{t \geq 0}$.
 \end{proof}
 
 Similarly, for uniform exponential stability the following result relies on property $\mathrm{AIEP}_\tau$.

 \begin{theorem}[Uniform exponential stability]
  \label{thm:exp_stability}
  Assume that Assumption \ref{assmpt:serial} and \ref{assmpt:dissipation} hold true.
  Further assume that $\hat A$ generates an (asymptotically) strongly stable contraction semigroup $(\hat T(t))_{t \geq 0}$ on $\hat X$, and that for all $j \in \mathcal{J}$ the pairs $(\hat {\mathfrak{A}}^j, (\hat {\mathfrak{B}}^j, \hat {\mathfrak{R}}^j))$ have property $\mathrm{AIEP}_{\hat {\mathfrak{C}}^j}$, i.e.\
   \[
    \left.
    \begin{array}{l}
     (\hat x^j_n)_{n \geq 1} \subseteq \dom(\hat {\mathfrak{A}}^j), \\
     \sup_{n \in \N} \| \hat x^j_n \|_{\hat X^j} < \infty \\
     (\beta_n)_{n \geq 1} \subseteq \R, \\
     \abs{\beta_n} \rightarrow \infty \\
     (\hat {\mathfrak{A}}^j - \ii \beta_n) \hat x_n \rightarrow 0 \\
     (\hat {\mathfrak{B}}^j \hat x^j_n, \hat {\mathfrak{R}}^j \hat x^j_n) \rightarrow 0
    \end{array}
    \right\}
    \quad
    \Longrightarrow
    \quad
    \left\{
    \begin{array}{l}
    \hat x^j_n \rightarrow 0 \quad \text{in } \hat X^j \\
    \hat {\mathfrak{C}}^j \hat x^j_n \rightarrow 0 \quad \text{in } \hat Y^j
    \end{array}
    \right.
    \tag{$\mathrm{AIEP}_{\hat {\mathfrak{C}}^j}$}
    \label{eqn:AIEP_C^j_ser}
   \]
  Then the $C_0$-semigroup $(\hat T(t))_{t \geq 0}$ is uniformly exponentially stable.
 \end{theorem}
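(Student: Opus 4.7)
The plan is to apply the Gearhart--Pr\"uss--Huang Theorem in its sequence characterisation from Remark \ref{rem:GPH}. Since $\hat A$ generates a contraction semigroup that is assumed to be strongly stable and has compact resolvent by Theorem \ref{thm:generation_network-dynamic}, we have $\sigma(\hat A) = \sigma_p(\hat A) \subseteq \C_0^-$, so in particular $\ii \R \subseteq \rho(\hat A)$. It then suffices to verify the following sequence criterion: for every $(\hat x_n, \beta_n)_{n \geq 1} \subseteq \dom(\hat A) \times \R$ with $\sup_n \norm{\hat x_n}_{\hat X} < \infty$, $\abs{\beta_n} \rightarrow \infty$ and $(\hat A - \ii \beta_n) \hat x_n \rightarrow 0$ in $\hat X$, we must show $\hat x_n \rightarrow 0$ in $\hat X$.

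Let $(\hat x_n, \beta_n)$ be such a sequence. First, testing against $\hat x_n$ and invoking the dissipation estimate from Assumption \ref{assmpt:dissipation} yields
\[
 0 \leftarrow \Re \sp{(\hat A - \ii \beta_n) \hat x_n}{\hat x_n}_{\hat X}
  = \Re \sp{\hat A \hat x_n}{\hat x_n}_{\hat X}
  \leq - \sum_{j \in \hat {\mathcal{J}}} \norm{\hat {\mathfrak{R}}^j \hat x^j_n}_{\hat {\mathcal{Z}}^j}^2 \leq 0,
\]
so $\hat {\mathfrak{R}}^j \hat x^j_n \rightarrow 0$ in $\hat {\mathcal{Z}}^j$ for each $j \in \hat {\mathcal{J}}$. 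Next, I exploit the fact that $\hat A$ acts block-diagonally in the sense that $\hat A \hat x = (\hat {\mathfrak{A}}^j \hat x^j)_{j \in \hat {\mathcal{J}}}$ on $\dom(\hat A)$, the interconnection being encoded purely in the domain via $\hat {\mathfrak{B}}^j \hat x^j = \sum_{i < j} \hat K^{ij} \hat {\mathfrak{C}}^i \hat x^i$. Projecting the convergence $(\hat A - \ii \beta_n) \hat x_n \rightarrow 0$ onto the $j$-th factor of the product Hilbert space $\hat X = \prod_j \hat X^j$ gives $(\hat {\mathfrak{A}}^j - \ii \beta_n) \hat x^j_n \rightarrow 0$ in $\hat X^j$ for each $j$, and boundedness of $(\hat x_n)$ in $\hat X$ entails boundedness of each component $(\hat x^j_n)$ in $\hat X^j$.

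The final step is a finite induction over $j \in \hat {\mathcal{J}} = \{1, \ldots, \hat m\}$, showing simultaneously that $\hat x^j_n \rightarrow 0$ in $\hat X^j$ and $\hat {\mathfrak{C}}^j \hat x^j_n \rightarrow 0$ in $\hat Y^j$. For the base case $j = 1$, the strict lower block-triangularity in Assumption \ref{assmpt:serial} makes the sum defining $\hat {\mathfrak{B}}^1 \hat x^1_n$ empty, hence $\hat {\mathfrak{B}}^1 \hat x^1_n = 0$ trivially converges to zero; together with $(\hat {\mathfrak{A}}^1 - \ii \beta_n) \hat x^1_n \rightarrow 0$ and $\hat {\mathfrak{R}}^1 \hat x^1_n \rightarrow 0$, property \eqref{eqn:AIEP_C^j_ser} for the first subsystem yields $\hat x^1_n \rightarrow 0$ and $\hat {\mathfrak{C}}^1 \hat x^1_n \rightarrow 0$. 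For the step, given the induction hypothesis $\hat {\mathfrak{C}}^i \hat x^i_n \rightarrow 0$ for all $i < j$, continuity of the finitely many $\hat K^{ij}$ gives
\[
 \hat {\mathfrak{B}}^j \hat x^j_n
  = \sum_{i = 1}^{j-1} \hat K^{ij} \hat {\mathfrak{C}}^i \hat x^i_n
  \rightarrow 0 \quad \text{in } \hat U^j,
\]
so property $\mathrm{AIEP}_{\hat {\mathfrak{C}}^j}$ applied to $\hat x^j_n$ produces $\hat x^j_n \rightarrow 0$ in $\hat X^j$ and $\hat {\mathfrak{C}}^j \hat x^j_n \rightarrow 0$ in $\hat Y^j$. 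After $\hat m$ steps we have $\hat x_n = (\hat x^j_n)_j \rightarrow 0$ in $\hat X$, which concludes the argument via Gearhart--Pr\"uss--Huang.

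The only real subtlety in this plan is the componentwise reduction in the middle paragraph: one needs the fact that the network coupling enters exclusively through $\dom(\hat A)$ (not through the action of $\hat A$) in order to split the convergence $(\hat A - \ii \beta_n)\hat x_n \rightarrow 0$ into independent convergences for each subsystem and thereby feed each $\mathrm{AIEP}_{\hat {\mathfrak{C}}^j}$ hypothesis with the correct bounded sequence. The rooted-graph (strictly lower triangular) structure then ensures that the induction does not break, because at each step the newly produced convergence $\hat {\mathfrak{C}}^j \hat x^j_n \rightarrow 0$ is exactly what is needed to control $\hat {\mathfrak{B}}^{j'} \hat x^{j'}_n$ for all later indices $j' > j$.
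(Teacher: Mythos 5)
Your proposal is correct and follows essentially the same route as the paper's proof: the Gearhart--Pr\"uss--Huang criterion in its sequence form, the dissipation estimate of Assumption \ref{assmpt:dissipation} to obtain $\hat{\mathfrak{R}}^j \hat x^j_n \rightarrow 0$, and a finite induction over $j$ driven by the strictly lower block-triangular coupling and property $\mathrm{AIEP}_{\hat{\mathfrak{C}}^j}$. Your explicit remark that the coupling enters only through $\dom(\hat A)$, so that $(\hat A - \ii\beta_n)\hat x_n \rightarrow 0$ splits componentwise, is a point the paper leaves implicit but is exactly right.
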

 
 \begin{remark}
  The assumption that in \eqref{eqn:AIEP_C^j_ser} one has $\hat {\mathfrak{C}}^j \hat x^j_n \rightarrow 0 \quad \text{in } \hat Y^j$ could be weakened to $\Pi^j \hat {\mathfrak{C}}^j \hat x^j_n \rightarrow 0 \quad \text{in } \hat Y^j$ where $\Pi^j: \hat Y^j \rightarrow \hat Y^j$ is the orthogonal projection onto $(\cap_{i > j} \ker \hat K^{ij})^\bot$, however in concrete examples this does not make any difference.
  If necessary, one could extend the system by an artificial additional hybrid system $\mathfrak{S}$ to ensure the structure of Theorem \ref{thm:exp_stability}.
 \end{remark}

 \begin{proof}[Proof of Theorem \ref{thm:exp_stability}]
 Since $\hat A$ generates an asymptotically stable semigroup and has compact resolvent, $\sigma(\hat A) = \sigma_p(\hat A) \subseteq \C_0^-$ and we thus only have to prove that $\sup_{\beta \in \R} \norm{(\ii \beta - \hat A)^{-1}} < \infty$.
 Therefore, take any sequence $(\hat x_n, \beta_n)_{n \geq 1} \subseteq \dom(\hat A) \times \R$ such that $\sup_{n \in \N} \norm{\hat x_n}_{\hat X} < \infty$, $\abs{\beta_n} \rightarrow \infty$ and $(\ii \beta_n - \hat A) \hat x_n \rightarrow 0$ in $\hat X$.
 Then, by Assumption \ref{assmpt:dissipation} we obtain
  \[
   0
    \leftarrow \Re \sp{(\hat A - \ii \beta_n) \hat x_n}{\hat x_n}_{\hat X}
    = \Re \sp{\hat A \hat x_n}{\hat x_n}_{\hat X}
    \leq - \sum_{j \in \hat {\mathcal{J}}} \norm{\hat {\mathfrak{R}}^j \hat x^j_n}_{\hat {\mathcal{Z}}^j}^2
    \leq 0
  \]
 and therefore $\hat {\mathfrak{R}}^j \hat x^j \rightarrow 0$ for all $j \in \hat {\mathcal{J}}$.
 Moreover, by Assumption \ref{assmpt:serial}, we have
  \[
   \hat {\mathfrak{B}}^j \hat x^j_n
    = \sum_{i = 1}^{j-1} \hat K^{ji} \hat {\mathfrak{C}}^i \hat x^i_n,
    \quad
    j \in \hat {\mathcal{J}}
  \]
 and property $\mathrm{AIEP}_{\hat {\mathfrak{C}}^j}$ now implies that $\hat x^j_n \rightarrow 0$ and $\hat {\mathfrak{C}}^j \hat x^j_n \rightarrow 0$ whenever $\hat {\mathfrak{C}}^i \hat x^i_n \rightarrow 0$ for all $i < j$.
 Again, this is true for $j = 1$ and by induction it follows that $\hat x^j \rightarrow 0$ and $\hat {\mathfrak{C}}^j \hat x^j_n \rightarrow 0$ for all $j \in \hat {\mathcal{J}}$.
 In particular, $\hat x_n \rightarrow 0$ in $\hat X$ and, therefore, by the Gearhart-Pr\"uss-Huang Theorem the semigroup $(\hat T(t))_{t \geq 0}$ is uniformly exponentially stable.
 \end{proof}

 \section{Applications}
 \label{sec:applications}

 We now discuss the properties ASP and $\mathrm{AIEP}_{\hat {\mathfrak{C}}^j}$ for some particular classes of PDE which are of port-Hamiltonian type.
 We aim to give several types of interconnection structures, thus motivating the abstract results of the previous sections.
 We begin with

 \begin{proposition}
  Assume that $N_j = 1$ for all $\mathcal{J} = \hat {\mathcal{J}}$ (i.e.\ every hybrid PH-ODE systems consists of exactly one port-Hamiltonian system $\mathfrak{S}^j$ and a controller $\Sigma^j_c$),
  all Hamiltonian matrix density functions $\H^j$ ($j \in \mathcal{J}$) are Lipschitz continuous on $[0,1]$,
$\sigma_p(A^j_c) \subseteq \C_0^-$ for all $j \in {\mathcal{J}}$,
 there are $\mathfrak{R}^j: \dom({\mathfrak{A}}^j) \rightarrow {\mathcal{Z}}^j$ \ such that
   \[
    \Re \sp{\hat A x}{x}_{\hat X}
     \leq - \sum_{j \in {\mathcal{J}}} \abs{{\mathfrak{R}}^j x^j}^2
   \]
  and
   \[
    \abs{(\H_j x^j)(0)}
     \lesssim \abs{{\mathfrak{R}}^j \hat x^j} + \abs{{\mathfrak{B}}^j x^j},
     \quad
     x^j \in \dom({\mathfrak{A}}^j),
     \quad
     j \in {\mathcal{J}}.
   \]
  Then the $C_0$-semigroup generated by $\hat A$ is uniformly exponentially stable.
 \end{proposition}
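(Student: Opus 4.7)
My plan is to verify the hypotheses of the Gearhart-Pr\"uss-Huang theorem combined with the Arendt-Batty-Lyubich-V\~u theorem. By Theorem \ref{thm:generation_network-dynamic}, $\hat A$ generates a contractive $C_0$-semigroup with compact resolvent, so it suffices to show that $\sigma_p(\hat A) \cap \ii\R = \emptyset$ and that $\sup_{\beta \in \R} \norm{(\ii\beta - \hat A)^{-1}} < \infty$. By Remark \ref{rem:GPH}, the latter amounts to the following sequential criterion: every bounded sequence $(\hat x_n, \beta_n)_{n \geq 1} \subseteq \dom(\hat A) \times \R$ with $\abs{\beta_n} \to \infty$ and $(\ii\beta_n - \hat A)\hat x_n \to 0$ in $\hat X$ satisfies $\hat x_n \to 0$ in $\hat X$.

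For such a sequence, pairing with $\hat x_n$ and taking real parts yields $\Re\sp{\hat A \hat x_n}{\hat x_n}_{\hat X} \to 0$; the dissipation hypothesis then forces $\mathfrak{R}^j x^j_n \to 0$ in $\mathcal{Z}^j$ for each $j \in \mathcal{J}$. The ODE component of the equation reads $(A_c^j - \ii\beta_n) x_{c,n}^j + B_c^j \mathfrak{C}^j x^j_n \to 0$, and since $A_c^j$ is a finite-dimensional matrix with $\sigma_p(A_c^j) \subseteq \C_0^-$, the resolvent $(\ii\beta - A_c^j)^{-1}$ is uniformly bounded on $\R$ with $\norm{(\ii\beta - A_c^j)^{-1}} = O(\abs{\beta}^{-1})$ as $\abs{\beta} \to \infty$. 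Combined with the boundedness of $x_{c,n}^j$ in $X_c^j$ and the closure identity $\mathfrak{B}^j x^j_n = - C_c^j x_{c,n}^j - D_c^j \mathfrak{C}^j x^j_n$, this is used to extract decay of $\mathfrak{B}^j x^j_n$.

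Once $\mathfrak{R}^j x^j_n \to 0$ and $\mathfrak{B}^j x^j_n \to 0$ are in hand, the hypothesized trace bound delivers $(\H^j x^j_n)(0) \to 0$ in $\K^{d^j}$. For a first-order port-Hamiltonian operator with Lipschitz Hamiltonian density, the sideways-energy estimate of Villegas et al.\ \cite{VillegasEtAl_2009}, refined in \cite{AugnerJacob_2014}, yields the AIEP property of the pair $(\mathfrak{A}^j, \delta_0 \circ \H^j)$; hence $x^j_n \to 0$ in $X^j$ for each $j$. Then $\mathfrak{C}^j x^j_n \to 0$, and the ODE equation together with the exponential stability of $(\ee^{t A_c^j})_{t \geq 0}$ gives $x_{c,n}^j \to 0$, so $\hat x_n \to 0$ in $\hat X$. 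The analogous argument applied to an eigenfunction ($\hat A \hat x = \ii \beta \hat x$ with $\beta \in \R$) rules out eigenvalues on the imaginary axis, completing the verification via Gearhart-Pr\"uss-Huang.

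The principal obstacle is the intermediate extraction of $\mathfrak{B}^j x^j_n \to 0$ from the closure and controller equations. The boundary trace $\mathfrak{C}^j x^j_n$ is only \emph{a priori} of size $O(\abs{\beta_n})$, since from $(\mathfrak{A}^j - \ii\beta_n) x^j_n \to 0$ one can at best bound the $H^1$-norm of $\H^j x^j_n$ by $O(\abs{\beta_n})$; thus the closure identity alone does not immediately produce decay, and one must balance the contribution $-C_c^j x_{c,n}^j$ against $-D_c^j \mathfrak{C}^j x^j_n$ using the controller resolvent asymptotics $\norm{(\ii\beta - A_c^j)^{-1}} = O(\abs{\beta}^{-1})$. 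Once this decoupling is accomplished, the remainder of the argument is a routine combination of the Villegas-type AIEP for single port-Hamiltonian systems with the elementary exponential stability of the finite-dimensional controllers.
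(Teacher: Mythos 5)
There is a genuine gap, and you have in fact put your finger on it yourself: the extraction of $\mathfrak{B}^j x^j_n \rightarrow 0$ is indeed the crux, but the resolution you sketch does not work. Dissipativity only yields $\mathfrak{R}^j x^j_n \rightarrow 0$; it gives no control on $\mathfrak{B}^j x^j_n$ or $\mathfrak{C}^j x^j_n$. Your proposed balancing of $-C_c^j x_{c,n}^j$ against $-D_c^j \mathfrak{C}^j x^j_n$ via the matrix resolvent asymptotics $\norm{(\ii\beta - A_c^j)^{-1}} = O(\abs{\beta}^{-1})$ only shows that $x_{c,n}^j$ stays bounded (since $\mathfrak{C}^j x^j_n = O(\abs{\beta_n})$), not that either term decays; without further input there is simply no reason for $\mathfrak{B}^j x^j_n$ to tend to zero for a generic subsystem, and the proposition would in fact be false without the structural hypothesis you never invoke. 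The statement lives in the setting of Section \ref{sec:networks_hybrid_systems}, where Assumption \ref{assmpt:serial} requires the interconnection matrix $\hat K$ to be strictly lower-block triangular (a rooted-graph/serial structure), and the paper's proof is an induction along that graph via Theorems \ref{thm:asymptotic_stability} and \ref{thm:exp_stability}: the root subsystem has zero input, so its AIEP-type property applies directly; its \emph{output trace} then decays, which makes the input of the next subsystem decay, and so on.

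This also explains a second omission in your argument: plain AIEP ($x^j_n \rightarrow 0$ in $X^j$) is not enough to run the induction, because the input of subsystem $j$ is built from the boundary traces $\hat{\mathfrak{C}}^i \hat x^i_n$ of its predecessors, and $L_2$-convergence does not control traces. One needs the strengthened property $\mathrm{AIEP}_{\tau \circ \H}$, which is exactly what Lemma \ref{lem:ASP_AIEP_N=1}(2) supplies for $N = 1$: from $(\H x_n)(0) \rightarrow 0$ one gets both $x_n \rightarrow 0$ and $(\H x_n)(1) \rightarrow 0$, the latter being the quantity that feeds the next link of the chain. The same induction is needed in the eigenvalue-exclusion (ASP) step, which your proposal also treats as if $\mathfrak{B}^j x^j = 0$ were automatic. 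To repair the proof, state and use Assumption \ref{assmpt:serial}, replace the simultaneous treatment of all subsystems by the inductive argument of Theorems \ref{thm:asymptotic_stability} and \ref{thm:exp_stability}, and upgrade the cited AIEP property to the trace-convergent version (together with Theorem \ref{thm:stability}(3) to pass from the port-Hamiltonian operators $\mathfrak{A}^j$ to the hybrid operators $\hat{\mathfrak{A}}^j$ with their controllers).
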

 
 \begin{proof}
 This proposition follows from the Theorems \ref{thm:asymptotic_stability} and \ref{thm:exp_stability} above, and with the following lemma on port-Hamiltonian systems of order $N = 1$ and Theorem \ref{thm:stability} (the latter traducing properties ASP and $\mathrm{AIEP}_{\tau \circ \H}$ from the systems $\mathfrak{S}^j$ ($j \in \mathcal{J}$) to ${\mathfrak{A}}^j$ ($j \in {\mathcal{J}}$)).
 \end{proof}
 
 \begin{lemma}
 \label{lem:ASP_AIEP_N=1}
  Let $\mathfrak{S} = (\mathfrak{A}, \mathfrak{B}, \mathfrak{C})$ be a port-Hamiltonian system of order $N = 1$ and  $\H: [0,1] \rightarrow \K^{d \times d}$ be Lipschitz continuous.
  Then the following assertions hold true:
   \begin{enumerate}
    \item
     If $x \in \dom(\mathfrak{A})$ with $\mathfrak{A} x = \ii \beta x$ for some $\beta \in \R$, and additionally $(\H x)(0) = 0$, then $x = 0$.
    \item
     If $(x_n, \beta_n)_{n \geq 1} \subseteq \dom(\mathfrak{A}) \times \R$ with $\sup_{n \in \N} \norm{x_n}_X < \infty$, $\abs{\beta_n} \rightarrow \infty$ and $(\mathfrak{A} - \ii \beta_n) x_n \rightarrow 0$ in $X$, $(\H x_n)(0) \rightarrow 0$ in $\K^d$, then $x_n \rightarrow 0$ in $X$ and $(\H x_n)(1) \rightarrow 0$ in $\K^d$.
   \end{enumerate}
 \end{lemma}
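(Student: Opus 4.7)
The plan is to recast both statements as assertions about the linear first-order ODE satisfied by the auxiliary function $y := \H x$, and then to control $y$ by a Lyapunov-type functional whose derivative benefits from the Hermiticity $P_1^* = P_1$ of the principal coefficient. For $x \in \dom(\mathfrak{A})$ the function $y$ belongs to $H^1(0,1;\K^d)$, and the identity $\mathfrak{A} x - \ii \beta x = f$ rewrites, using that $P_1$ is invertible, as the linear ODE
\[
 y'(\z) = P_1^{-1}\bigl[\,\ii \beta\,\H^{-1}(\z)\,y(\z) - P_0(\z)\,y(\z) + f(\z)\bigr] \quad \text{a.e. } \z \in (0,1),
\]
whose coefficients lie in $L_\infty(0,1)$ (coercivity and boundedness of $\H$ give $\H^{-1} \in L_\infty$; Lipschitz regularity of $\H$ is needed only in the energy step below).

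For part (1), the hypothesis $(\H x)(0) = 0$ with $f = 0$ gives $y(0) = 0$, and uniqueness of solutions of the resulting linear homogeneous ODE forces $y \equiv 0$, hence $x = \H^{-1} y \equiv 0$.

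For part (2), I would introduce the pointwise energy
\[
 V_n(\z) := y_n(\z)^* \H^{-1}(\z)\, y_n(\z),
\]
which by coercivity and boundedness of $\H$ is uniformly equivalent to $|y_n(\z)|^2$ on $[0,1]$. Differentiating $V_n$ along the ODE (using that $\H^{-1}$ is Lipschitz, which follows from Lipschitz continuity and coercivity of $\H$), the crucial algebraic observation is that
\[
 2\,\Re\bigl(\ii \beta_n\, y_n^*\,\H^{-1} P_1^{-1} \H^{-1}\, y_n\bigr) = 0,
\]
because the matrix $\H^{-1} P_1^{-1} \H^{-1}$ is Hermitian; this is the only point at which $P_1^* = P_1$ enters. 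Consequently the potentially dangerous $\ii \beta_n$-contribution drops out of the energy balance, and the remaining terms can be estimated by
\[
 V_n'(\z) \leq C\, V_n(\z) + C\, |f_n(\z)|^2 \quad \text{a.e. } \z \in (0,1),
\]
with $C > 0$ independent of $n$. Gr\"onwall's inequality then yields the uniform bound $V_n(\z) \leq \ee^{C}\bigl(V_n(0) + \norm{f_n}_{L_2}^2\bigr)$ on $[0,1]$.

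Since by hypothesis $y_n(0) = (\H x_n)(0) \to 0$ in $\K^d$ and $f_n = (\mathfrak{A} - \ii \beta_n) x_n \to 0$ in $X$, this uniform bound forces $\sup_{\z \in [0,1]} |y_n(\z)| \to 0$, which simultaneously delivers $x_n = \H^{-1} y_n \to 0$ in $X$ and $y_n(1) = (\H x_n)(1) \to 0$ in $\K^d$. The main point of the whole argument is thus the cancellation of the leading-order $\ii \beta_n$-term through the Hermiticity of $P_1$; once that observation is made, the rest is a standard Gr\"onwall estimate, and as a side remark the assumption $\abs{\beta_n} \to \infty$ is not actually used in this step.
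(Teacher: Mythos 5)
Your proof is correct, and for part (2) it takes a genuinely different route from the paper. The paper disposes of both assertions by citing Propositions 2.11 and 2.12 of \cite{AugnerJacob_2014}: part (1) is there the same ODE-uniqueness argument you give, but part (2) rests on an integrated multiplier identity (pairing the residual $(\mathfrak{A}-\ii\beta_n)x_n$ against $q\,\H x_n$ for suitable $q\in C^1$ and integrating by parts), and the extra conclusion $(\H x_n)(1)\to 0$ is then read off from the boundary terms of that identity for $q=1$. You instead run a pointwise Gr\"onwall estimate in the spatial variable on $V_n=y_n^*\H^{-1}y_n$. The decisive cancellation $2\Re\bigl(\ii\beta_n\,y_n^*\H^{-1}P_1^{-1}\H^{-1}y_n\bigr)=0$ is valid: $P_1^*=P_1$ is the case $k=1$ of $P_k^*=(-1)^{k+1}P_k$, and $\H(\z)$ is Hermitian positive definite (as it must be for $\sp{\cdot}{\cdot}_\H$ to be an inner product, and as the paper itself uses when it invokes positive definiteness of $\H(1)$), so $\H^{-1}P_1^{-1}\H^{-1}$ is Hermitian --- the weight $\H^{-1}$ is exactly what makes this work, since $P_1^{-1}\H^{-1}$ alone need not be Hermitian. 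Your argument is self-contained, is essentially the sideways/observability estimate in frequency-domain form (cf.\ Lemma 9.1.2 in \cite{JacobZwart_2012}), yields the stronger uniform conclusion $\sup_{\z\in[0,1]}\abs{(\H x_n)(\z)}\to 0$ and hence $\tau(\H x_n)\to 0$, i.e.\ property $\mathrm{AIEP}_{\tau\circ\H}$ directly, and your observation that neither $\sup_n\norm{x_n}_X<\infty$ nor $\abs{\beta_n}\to\infty$ is actually needed is accurate. What the paper's multiplier route buys instead is a template that carries over to the second-order (Euler--Bernoulli) computations in its appendix, where no such pointwise Gr\"onwall argument is available.
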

 
 \begin{proof}
 \begin{enumerate}
  \item
   See the proof of Proposition 2.11 in \cite{AugnerJacob_2014}.
  \item
   For the property that $x_n \rightarrow 0$ in $X$, see the proof of Proposition 2.12 in \cite{AugnerJacob_2014}.
   Repeating the proof presented there for $q = 1$ shows that that
    \[
     \frac{1}{2} \norm{x_n}_X^2 + \frac{1}{2} \left[ \sp{x_n(\z)}{\H(\z) x_n(\z)}_{\K^d} \right]_0^1
     \rightarrow 0, 
    \]
   and since $x_n \rightarrow 0$ in $X$, $(\H x_n)(0) = \H(0) x_n(0) \rightarrow 0$ and $\H(1)$ is symmetric positive definite, this implies that $(\H x_n)(1) \rightarrow 0$ as well.
 \end{enumerate}
   \end{proof}
 
 \begin{example}[Serially Connected Strings]
  \label{exa:chain_wave_eqn}
  As an example where the structure of the interconnection can be employed to ensure uniform exponential stability, consider the following chain of serially connected strings, see Figure \ref{fig:Diagram_Serial_Interconnection_Of_Strings}, which are modelled by the \emph{non-uniform one-dimensional wave equation}:
  \[
   \rho(\z) \omega_{tt}(t,\z)
    - ( T(\z) \omega_\z)_\z(t,\z)
    = 0,
    \quad
    \z \in (\z^{j-1}, \z^j), \, t \geq 0, \quad j = 1, \ldots, m
  \]
 where $0 =: \z^0 < \z^1 < \ldots < \z^m := L$ and $0 < \varepsilon \leq \rho^j := \rho|_{(\z^{j-1},\z^j)}, T^j := T|_{(\z^{j-1},\z^j)} \in \Lip(\z^{j-1}, \z^j;\R)$.
 \begin{figure}
  \centering
  \includegraphics[scale = 0.8]{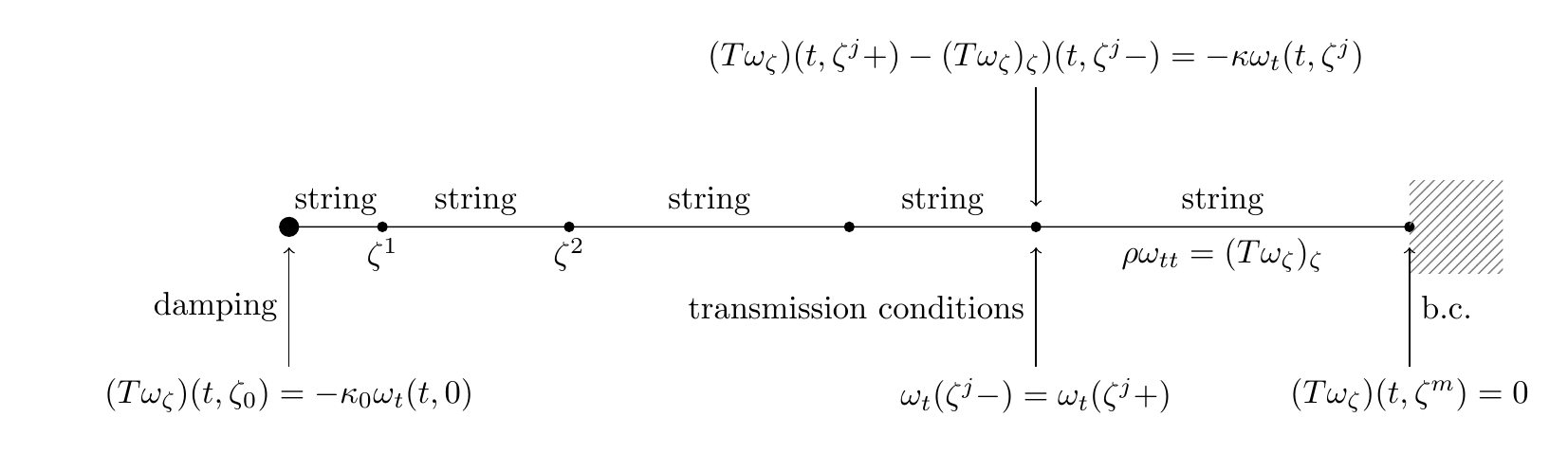}
	\caption{A Chain of Serially Interconnected Strings.}
	\label{fig:Diagram_Serial_Interconnection_Of_Strings}
 \end{figure}
 The chain of strings is damped at the left end, free at the right end, and interconnected in a dissipative or conservative way:
  \begin{align}
   \left( T \omega_\z \right)(t,\z_0)
     &= - \kappa^0 \omega_t (t,\z_0),
     \quad
    t \geq 0 \quad (\text{for some } \kappa^0 > 0) \nonumber \\
    \left( T \omega_\z \right)(t,\z^m)
     &= 0,
     \quad
    t \geq 0 \nonumber \\
    \omega_t (t, \z^j-)
     &= \omega_t (t, \z^j+),
     \quad
     t \geq 0, \, j = 1, \ldots, m-1
     \nonumber \\
    \left( T \omega_\z \right)(t, \z^j-)
     &- \left( T \omega_\z \right)(t, \z^j+)
     = - \kappa^j \omega_t (t, \z^j),
     \nonumber \\
     &\qquad
     t \geq 0, \, j = 1, \ldots, m-1 \quad (\text{for some } \kappa^j \geq 0)
     \label{eqn:interconnection_string_wave_eqn}
  \end{align}
 \end{example}
 We show that this example can be written as a network of port-Hamiltonian systems of order $N = 1$, and the theory developed in this section can be applied to deduce stability properties for this system.
 Using a scaling argument we may and reduce the general case to the special case $\z^j = j$.
 We may then identify $x^j(t,\z) := (\rho(j + \z) \omega_t(t,j + \z), - \omega_\z(t,j+\z))$ and $\H^j(\z) := \operatorname{diag}\, (1/\rho(j+\z), T(j + \z))$ and obtain for $P^j_1 = \left[ \begin{smallmatrix} 0 & 1 \\ 1 & 0 \end{smallmatrix} \right], P^j_0 = 0 \in \K^{2 \times 2}$, $j = 2, \ldots, m$, the first order port-Hamiltonian systems
  \[
   \mathfrak{A}^j x^j
    = \left[ P_1 \frac{\partial}{\partial \z} + P_0 \right] (\H^j x^j)(\z),
    \quad
    x^j \in \dom(\mathfrak{A}^j)
    = \{ x^j \in L^2(0,1;\K^2): \, (\H^j x^j) \in H^1(0,1;\K^2) \}
  \]
 with boundary input and output maps
  \[
   \mathfrak{B}^j x^j
    = \left( \begin{array}{c} - (\H^j_2 x^j_2)(0) \\ (\H^j_1 x^j_1)(1) \end{array}\right),
    \quad
   \mathfrak{C}^j x^j
    = \left( \begin{array}{c} (\H^j_1 x^j_1)(0) \\ (\H^j_2 x^j_2)(1) \end{array} \right),
    \quad
   \dom(\mathfrak{B}^j)
    = \dom(\mathfrak{C}^j)
    = \dom(\mathfrak{A}^j),
    \quad
   j \in \mathcal{J} \setminus \{m\}
  \]
 and
  \[
   \mathfrak{B}^m x^m
    = \left( \begin{array}{c} - (\H^m_2 x^m_2)(0) \\ (\H^m_2 x^m_2)(1) \end{array}\right),
    \quad
   \mathfrak{C}^m x^m
    = \left( \begin{array}{c} (\H^m_1 x^m_1)(0) \\ (\H^m_1 x^m_1)(1) \end{array} \right),
    \quad
   \dom(\mathfrak{B}^m)
    = \dom(\mathfrak{C}^m)
    = \dom(\mathfrak{A}^m).
  \]
 For this choice of the boundary input and output maps, the port-Hamiltonian systems $\mathfrak{S}^j = (\mathfrak{A}^j, \mathfrak{B}^j, \mathfrak{C}^j)$ become impedance passive with energy state spaces $X^j = (L_2(0,1;\K^2), \norm{\cdot}_{\H^j})$ and input and output spaces $U^j = Y^j = \K^2$.
 This property corresponds to the formal \emph{energy balance equation}
  \[
   \frac{\dd}{\dd t} H_{\mathrm{WE}}(t)
    := \frac{\dd}{\dd t} \frac{1}{2} \int_0^1 \rho(\z) \abs{\omega_t(t,\z)}^2 + T(\z) \abs{\omega_\z(t,\z)}^2 \, \dd \z
    = \Re \left[ \sp{\omega_t(t,\z)}{T(\z) \omega_\z(t,\z)}_\K \right]_0^1 
  \]
 for the wave equation $\rho(\z) \omega_{tt}(t,\z) = (T(\cdot) \omega_\z)_\z(t,\z)$.
 The interconnection structure \eqref{eqn:interconnection_string_wave_eqn} can then be written in the boundary feedback form
  \[
   \mathfrak{B} x
    = \left[ \begin{array}{ccccccccc}
     - \kappa_0 & 0 & 0 & 0 & 0 & 0 & 0 & 0 & 0 \\
     0 & 0 & 1 & 0 & 0 & 0 & 0 & 0 & 0 \\
     0 & -1 & -\kappa_1 & 0 & 0 & 0 & 0 & 0 & 0 \\
     0 & 0 & 0 & 0 & 1 & 0 & 0 & 0 & 0 \\
     0 & 0 & 0 & -1 & -\kappa_2 & 0 & 0 & 0 & 0 \\
     0 & 0 & 0 & 0 & \ddots & \ddots & \ddots & 0 & 0 \\
     0 & 0 & 0 & 0 & 0 & 0 & 0 & 1 & 0 \\
     0 & 0 & 0 & 0 & 0 & 0 & -1 & - \kappa_m & 0 \\
     0 & 0 & 0 & 0 & 0 & 0 & 0 & 0 & 0
    \end{array} \right]
    \mathfrak{C} x
    =: K \mathfrak{C} x.
  \]
 Clearly, the symmetric part of $K$,
  \[
   \Sym K
    = \left[ \begin{array}{ccccccccc}
     - \kappa_0 & 0 & 0 & 0 & 0 & 0 & 0 & 0 & 0 \\
     0 & 0 & 0 & 0 & 0 & 0 & 0 & 0 & 0 \\
     0 & 0 & -\kappa_1 & 0 & 0 & 0 & 0 & 0 & 0 \\
     0 & 0 & 0 & 0 & 0 & 0 & 0 & 0 & 0 \\
     0 & 0 & 0 & 0 & -\kappa_2 & 0 & 0 & 0 & 0 \\
     0 & 0 & 0 & 0 & \ddots & \ddots & \ddots & 0 & 0 \\
     0 & 0 & 0 & 0 & 0 & 0 & 0 & 0 & 0 \\
     0 & 0 & 0 & 0 & 0 & 0 & 0 & - \kappa_{m-1} & 0 \\
     0 & 0 & 0 & 0 & 0 & 0 & 0 & 0 & 0
    \end{array} \right]
  \]
 is negative semi-definite, thus the operator
  \[
   A x = (\mathfrak{A}^j x^j)_{j \in \mathcal{J}},
    \quad
    \dom(A)
     = \{x \in \dom(\mathfrak{A}) = \prod_{j \in \mathcal{J}} \dom(\mathfrak{A}^j): \, \mathfrak{B} x = K \mathfrak{C} x\}
  \]
 is dissipative on the product Hilbert space $X = \prod_{j \in \mathcal{J}} X^j$ and thus generates a contractive $C_0$-semigroup $(T(t))_{t \geq 0}$ on $\hat X$ by Theorem \ref{thm:generation_network-dynamic} (or, by Theorem 4.1 in \cite{LeGorrecZwartMaschke_2005}).
 We employ Theorem \ref{thm:asymptotic_stability} and Theorem \ref{thm:exp_stability} to deduce uniform exponential stability, as long as the parameter functions $\rho^j$ and $T$ are Lipschitz continuous on $(\z^{j-1}, \z^j)$, $j = 1, \ldots, m$.
 For this end, we reformulate the boundary conditions in a form more suitable for the setting of these theorems, and set
  \begin{align*}
   \mathfrak{B}^1 x^1
    &= ((\H^1_2 x^1_2)(0) + \kappa^0  (\H^1_1 x^1_1)(0)) \in \K,
    \\
   \mathfrak{C}^1 x^1
    &= ((\H^1_1 x^1_1)(0), (\H^1 x^1)(1)) \in \K^3,
    \\
   \mathfrak{B}^j x^j
    &= (\H^j x^j)(0) \in \K^2,
    \\
   \mathfrak{C}^j x^j
    &= (\H^j x^j)(1) \in \K^2,
    \qquad
    j = 2, \ldots, m-1,
    \\
   \mathfrak{B}^m x^m
    &= ((\H^m x^m)(0), (\H^m_2 x^m_2)(1)) \in \K^3,
    \\
   \mathfrak{C}^m x^m
    &= (\H^m_1 x^m_1)(0) \in \K.
  \end{align*}
 (In this situation, we simply have $X_c = \{0\}$.)
 Then, the boundary conditions can be rewritten in the form
  \[
   \mathfrak{B}^j x^j
    = \sum_{i=1}^{j-1} K^{ij} \mathfrak{C}^i x^i,
    \quad
    j \in \hat {\mathcal{J}} = \{1, \ldots, m\}
  \]
 for appropriate matrices $K^{ij}$, $i,j \in {\mathcal{J}}$, and such that $K = (K^{ij})_{i,j \in {\mathcal{J}}}$ is strictly lower-block triangular.

 \begin{corollary}
  In the situation of Example \ref{exa:chain_wave_eqn}, assume that $\rho^j, T^j: (\z^{j-1}, \z^j) \rightarrow (0, \infty)$ are Lipschitz continuous for each string $j \in \mathcal{J}$ of the serially connected chain, and assume that $\kappa^0 > 0$ whereas $\kappa^j \geq 0$ for $j \in \mathcal{J}$.
  Then the problem is well-posed, i.e.\ for every initial datum
   \[
    (\omega(0,\cdot), \omega_t(0,\cdot))
     = (\omega_0, \omega_1) \in \prod_{j \in\mathcal{J}} H^1(\z^{j-1}, \z^j) \times \prod_{j \in \mathcal{J}} L_2(\z^{j-1}, \z^j)
   \]
  there is a unique strong solution $\omega: \R \rightarrow \prod_{j \in\mathcal{J}} H^1(\z^{j-1}, \z^j)$ such that
   \[
    \omega \in C(\R_+; \prod_{j \in\mathcal{J}} H^1(\z^{j-1}, \z^j)),
     \quad
     \omega_t \in C(\R_+; \prod_{j \in\mathcal{J}} L_2(\z^{j-1}, \z^j))
   \]
  with non-increasing energy
   \[
    H_\mathrm{WE}(t)
     = \frac{1}{2} \sum_{j=1}^m \int_{\z_{j-1}}^{\z_j} \rho_j |\omega_t|^2 + T_j |\omega_\z|^2 \dd \z
   \]  
  and there are constants $M \geq 1$ and $\eta < 0$ such that
   \[
    H_\mathrm{WE}(t)
     \leq M \ee^{\eta t} H_\mathrm{WE}(0),
     \quad
     t \geq 0
   \]
  holds uniformly for all initial data.
  Moreover, if additionally \[ ((T \omega_0)_\z , \omega_1) \in \prod_{j \in\mathcal{J}} H^1(\z^{j-1}, \z^j) \times \prod_{j \in\mathcal{J}} H^1(\z^{j-1}, \z^j) \] and satisfy the compatibility conditions for \eqref{eqn:interconnection_string_wave_eqn}, i.e.\
   \[
    \begin{cases}
     \left( T (\omega_0)_\z \right)(\z_0)
     = - \kappa^0 \omega_1^1(\z_0),
     \\
    \left( T (\omega_0^m)_\z \right)(\z^m)
     = 0,
     \\
    \omega_1(\z^j-)
     = \omega_1 (\z^j+),
     \\
    \left( T (\omega_0)_\z \right)(\z^j-)
     - \left( T (\omega_0)_\z \right)(\z^j+)
     = - \kappa^j \omega_1(\z^j),
     &
     j = 1, \ldots, m-1,
    \end{cases}
   \]
  the solution is classical, i.e.\
   \begin{align*}
    \omega &\in C^1(\R_+; \prod_{j \in\mathcal{J}} H^1(\z^{j-1}, \z^j)),
     \\
     \omega_t &\in C(\R_+; \prod_{j \in\mathcal{J}} H^1(\z^{j-1}, \z^j)),
     \\
     T \omega_\z &\in C(\R_+; \prod_{j \in\mathcal{J}} H^1(\z^{j-1}, \z^j)).
   \end{align*}
 \end{corollary}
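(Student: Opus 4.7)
The plan is to cast the chain as the network of first-order port-Hamiltonian systems $\mathfrak{S}^j = (\mathfrak{A}^j, \mathfrak{B}^j, \mathfrak{C}^j)$ constructed in Example \ref{exa:chain_wave_eqn} (with no finite-dimensional controller, i.e.\ $X_c = \{0\}$) and then invoke the abstract machinery of Sections \ref{sec:PHS_networks}--\ref{sec:networks_hybrid_systems}. Since the interconnection matrix $K$ has negative semi-definite symmetric part, the operator $A = \mathfrak{A}|_{\ker(\mathfrak{B} - K\mathfrak{C})}$ is dissipative on the product energy space $X = \prod_{j \in \mathcal{J}} X^j$; Theorem \ref{thm:generation_network-dynamic} (applied with $X_c = \{0\}$) then yields that $A$ generates a contractive $C_0$-semigroup $(T(t))_{t \geq 0}$ on $X$ with compact resolvent. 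Well-posedness in the stated sense, together with the non-increasing energy identity $\frac{\dd}{\dd t} H_{\mathrm{WE}}(t) = \Re \sp{A x(t)}{x(t)}_X \leq 0$, is a direct consequence.

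For uniform exponential stability I would verify the hypotheses of Theorems \ref{thm:asymptotic_stability} and \ref{thm:exp_stability}. The rewritten boundary maps $\mathfrak{B}^j, \mathfrak{C}^j$ given in the example satisfy Assumption \ref{assmpt:serial}: $\mathfrak{B}^1$ is identically zero as a function of the $\mathfrak{C}^i$, while $\mathfrak{B}^j$ ($j \geq 2$) depends only on $\mathfrak{C}^{j-1}$, so the coupling matrix $K = (K^{ij})_{i,j \in \hat{\mathcal{J}}}$ is strictly lower-block triangular. For Assumption \ref{assmpt:dissipation} I set $\hat{\mathfrak{R}}^1 x^1 := \sqrt{\kappa^0}\,(\H^1_1 x^1_1)(0)$ and $\hat{\mathfrak{R}}^j x^j := \sqrt{\kappa^{j-1}}\,(\H^j_1 x^j_1)(0)$ for $j \geq 2$ (with the obvious interpretation when some $\kappa^j$ vanishes); an elementary computation using the energy balance for the wave equation shows $\Re \sp{A x}{x}_X \leq - \sum_{j \in \hat{\mathcal{J}}} |\hat{\mathfrak{R}}^j x^j|^2$.

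The remaining tasks are to check property ASP and property $\mathrm{AIEP}_{\hat{\mathfrak{C}}^j}$ for each subsystem. I would argue node by node using Lemma \ref{lem:ASP_AIEP_N=1}. For $j=1$, the combined information $\hat{\mathfrak{B}}^1 x^1 = 0$ and $\hat{\mathfrak{R}}^1 x^1 = 0$ forces $(\H^1_1 x^1_1)(0) = 0$ (via $\kappa^0 > 0$) and then $(\H^1_2 x^1_2)(0) = 0$, so $(\H^1 x^1)(0) = 0$; part (i) of the lemma gives $x^1 = 0$, while part (ii) propagates the asymptotic version, yielding $x^1_n \to 0$ in $X^1$ and $(\H^1 x^1_n)(1) \to 0$, i.e.\ $\hat{\mathfrak{C}}^1 x^1_n \to 0$. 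For $j \geq 2$ the input itself is $\hat{\mathfrak{B}}^j x^j = (\H^j x^j)(0)$, so no use of $\hat{\mathfrak{R}}^j$ is needed: the lemma gives the full conclusion $x^j_n \to 0$ and $(\H^j x^j_n)(1) \to 0$ from $(\H^j x^j_n)(0) \to 0$ alone. Thus ASP and $\mathrm{AIEP}_{\hat{\mathfrak{C}}^j}$ hold for every $j$, and Theorems \ref{thm:asymptotic_stability} and \ref{thm:exp_stability} deliver uniform exponential decay of $H_{\mathrm{WE}}(t)$. The main subtlety, which I expect to be the main obstacle in the write-up, is the bookkeeping showing that $\hat{\mathfrak{R}}^j$ together with the \emph{incoming} information $\hat{\mathfrak{B}}^j x^j_n = \sum_{i<j} K^{ij} \hat{\mathfrak{C}}^i x^i_n$ (which by the inductive use of $\mathrm{AIEP}_{\hat{\mathfrak{C}}^i}$ does tend to zero) suffices to reconstruct $(\H^j x^j_n)(0) \to 0$ regardless of whether $\kappa^j$ is zero or positive.

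Finally, the classical-solution assertion follows from standard semigroup theory: under the stated regularity and the compatibility conditions \eqref{eqn:interconnection_string_wave_eqn}, the initial state $x_0 = (\rho\,\omega_1, -(\omega_0)_\z)$ lies in $\dom(A) = \{x \in \dom(\mathfrak{A}):\, \mathfrak{B} x = K \mathfrak{C} x\}$, so $T(\cdot) x_0 \in C^1(\R_+; X) \cap C(\R_+; \dom(A))$; translating back via $\H^j x^j \in H^1(0,1;\K^2)$ gives the claimed $H^1$-regularity in space of $\omega_t$ and $T \omega_\z$ on each segment and $C^1$-regularity of $\omega$ in time.
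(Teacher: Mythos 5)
Your proposal is correct and follows essentially the same route as the paper: dissipativity of $A$ plus the generation theorem for well-posedness, then verification of Assumptions \ref{assmpt:serial} and \ref{assmpt:dissipation} with the rewritten boundary maps, and Lemma \ref{lem:ASP_AIEP_N=1} to supply properties ASP and $\mathrm{AIEP}_{\tau^j\circ\H^j}$ for each string so that Theorems \ref{thm:asymptotic_stability} and \ref{thm:exp_stability} apply. Your additional remarks on the classical-solution statement (membership of the initial datum in $\dom(A)$ under the compatibility conditions) go slightly beyond what the paper's proof spells out, but are standard and consistent with it.
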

 
 \begin{proof}
 By the port-Hamiltonian formulation, we can see that the impedance passivity of the systems $\mathfrak{S}^j = (\mathfrak{A}^j, \mathfrak{B}^j, \mathfrak{C}^j)$ and the structure of the interconnection by the static feedback matrix $K$ imply that
  \begin{align*}
   \Re \sp{A x}{x}_X
    &\leq - \sum_{j=1}^m \kappa_{j-1} \abs{\mathfrak{B}^j_1 x^j}^2
    = - \sum_{j=1}^m \kappa_{j-1} \abs{(\H^j_2 x^j_2)(0)}^2
    \\
    &\leq - \kappa_0 \abs{(\H^1_2 x^1_2)(0)}^2
    = - \frac{1}{\kappa_0} \abs{(\H^1_1 x^1_1)(0)}^2
    \\
    &\leq - \frac{1}{2} \min \{\kappa_0, \kappa_0^{-1}\} \abs{(\H^1 x^1)(0)}^2,
    \quad
    x \in \dom(A).
  \end{align*}
 This already implies well-posedness.
 Moreover, for each $j \geq 2$ we have
  \[
   \abs{\mathfrak{B}^j x^j}
    \leq \abs{(\H^j x^j)(0)}.
  \]
 Since all the pairs $(\mathfrak{A}^j, (\H^j x^j)(0))$ have property ASP by Lemma \ref{lem:ASP_AIEP_N=1}, as long as the parameter functions $\rho^j, T^j$ are Lipschitz continuous, it follows asymptotic stability from Theorem \ref{thm:asymptotic_stability}, and then, since by Lemma \ref{lem:ASP_AIEP_N=1} the pairs $(\mathfrak{A}^j, (\H^j x^j)(0))$ also have property $\mathrm{AIEP}_{\tau^j \circ \H^j}$ as well, uniform exponential stability follows by Theorem \ref{thm:exp_stability}.
 \end{proof}
 
 \begin{remark}
  It would be nice if one could apply Theorems \ref{thm:asymptotic_stability} and \ref{thm:exp_stability} to the case of a chain of Euler-Bernoulli beam models, cf.\ \cite{ChenEtAl_1987}, as well. Unfortunately, as it turns out a dissipativity condition like
   \begin{equation}
    \Re \sp{\hat A x}{x}_X
     \leq - \kappa \left( \abs{(\H x)(0)}^2 + \abs{(\H x)'(0)}^2 \right),
     \quad
     x \in \dom(\hat A)
     \label{eqn:dissipation_2nd_order}
   \end{equation}
  is not sufficient for uniform exponential stability of (closed-loop) port-Hamiltonian systems of order $N = 2$, and also for the special case of an Euler-Bernoulli beam such a property is not known.
  In particular, though clearly $(\mathfrak{A}, ((\H x)(0), (\H x)'(0)))$ has property ASP for port-Hamiltonian operators of order $N = 2$ with Lipschitz-continuous $\H: [0,1] \rightarrow \K^{d \times d}$, it is not known whether there are classes, e.g.\ Euler-Bernoulli beam type systems, for which properties AIEP and $\mathrm{AIEP}_\tau$ hold for the pair $(\mathfrak{A}, ((\H x)(0), (\H x)'(0)))$.
  Even more, dissipation of the form \eqref{eqn:dissipation_2nd_order} is not what can be ensured by the most usual damping conditions for the Euler-Bernoulli beam, namely only dissipation in three of the four components (or, the component being zero by the boundary conditions imposed on the system) of $((\H x)(0), (\H x)'(0))$ for the Euler-Bernoulli beam (where $d = 2$) is a realistic assumption.
  However, it is already known for thirty years \cite{ChenEtAl_1987}, that serially interconnected, homogeneous (i.e.\ constant parameters along each beam) Euler-Bernoulli beams can be uniformly exponentially stabilised at one end by suitable (realistic) boundary conditions, if one additionally assumes that the parameters are ordered in a monotone way.
  The same result for inhomogeneous beams, where the parameter functions on each beam are allowed to have Lipschitz continuous dependence on the spatial parameter $\z$, but still satisfy monotonicity conditions at the joints $\z^j$, will be shown in a forthcoming paper \cite{Augner_2018+a}.
 \end{remark}

 \begin{example}[The Euler-Bernoulli Beam]
 \label{exa:euler-bernoulli_beam}
 The Euler-Bernoulli beam equation
  \[
   \rho(\z) \omega_{tt}(t,\z)
    + \frac{\partial^2}{\partial \z^2} \left( EI(\z) \omega_{\z\z}(t,\z) \right),
     \quad
     t \geq 0,
     \, \z \in (a,b)
  \]
 can be written in port-Hamiltonian form for $N = 2$ and the identification
  \[
   x(t,\z)
    = \left( \begin{array}{c} \rho(\z) \omega_t(t,\z) \\ \omega_{\z\z}(t,\z) \end{array} \right),
    \quad
   \H(\z)
    = \left[ \begin{array}{cc} \rho(\z)^{-1} & 0 \\ 0 & EI(\z) \end{array} \right].
  \]
 Choosing $P_2 = \left[ \begin{smallmatrix} 0 & -1 \\ 1 & 0 \end{smallmatrix} \right], P_1 = P_0 = 0 \in \K^{2 \times 2}$, we arrive at the first order in time, second order in space evolution equation
  \[
   \frac{\partial}{\partial t} x(t,\z)
    = \mathfrak{A} x(t,\z)
    := \left[ P_2 \frac{\partial^2}{\partial \z^2} + P_1 \frac{\partial}{\partial \z} + P_0 \right] (\H(\z) x(t,\z)),
     \quad
     t \geq 0, \, \z \in (a,b).
  \]
 \end{example}
 After appropriate scaling, w.l.o.g.\ we may and will assume that $a = 0$ and $b = 1$ in the following.
 There are several possible choices for conservative boundary conditions (e.g.\ at the right end), such as
   \begin{enumerate}
    \item
     $\omega(t,1) = (EI \omega_{\z\z})(t,1)$ (\emph{simply supported} or \emph{pinned} right end),
    \item
     $\omega_{\z\z}(t,1) = (EI \omega_{\z\z})_\z(t,1) = 0$ (\emph{free} right end),
    \item
     $\omega_\z(t,1) = (EI \omega_{\z\z})_\z(t,0) = 0$ (\emph{shear hinge} right end),
    \item
     $\omega_t(t,1) = \omega_\z(t,1)$ (\emph{clamped} left end),
    \item
     $\omega_t(t,1) = (EI \omega_{\z\z})(t,1) = 0$,
    \item
     $\omega_{t\z}(t,1) = (EI \omega_{\z\z})_\z(t,1) = 0$.
   \end{enumerate}
  Here, the first and third case are just special cases of the fifth (there: $\omega(t,1) = c \in \K$) and sixth case (there: $\omega_\z(t,1) = c \in \K$), so the most important conservative boundary conditions in energy state space formulation read as
   \begin{enumerate}
    \item
     $(\H^m x^m)(1) = 0$,
    \item
     $(\H^m_2 x^m_2)(1) = (\H^m_2 x^m_2)'(1) = 0$,
    \item
     $(\H^m x^m)'(1) = 0$,
    \item 
     $(\H^m_1 x^m_1)(1) = (\H^m_1 x^m_1)'(1) = 0$.
   \end{enumerate}
 At the other end we want to impose dissipative boundary conditions to obtain uniform exponential energy decay for the solution of the Euler-Bernoulli beam model closed in this linear way, the most popular being (cf.\ \cite{ChenEtAl_1987})
 \begin{equation}
  \left( \begin{array}{c} (EI \omega_{\z\z})(0) \\ - (EI \omega_{\z\z})_\z(0) \end{array} \right)
   = - K_0 \left( \begin{array}{c} \omega_{t\z}(t,\z) \\ \omega_t(t,\z) \end{array} \right)
   \label{eqn:dissipative_end}
 \end{equation}
 for some matrix $K_0 \in \K^{2 \times 2}$ such that
  \[
   \text{either} \quad
    K_0
     = \left[ \begin{array}{cc} k_0^{11} & 0 \\ 0 & 0 \end{array} \right]
     \text{ for some } k_0^{11} > 0,
    \quad \text{or} \quad
    \Sym (K_0) > 0 \text{ is positive definite.}
  \]
 For the first of these options, conservative boundary conditions at the right end of type \emph{clamped end} or \emph{shear hinge right end} ensure well-posedness and uniform exponential energy, whereas in the second case any of the conservative boundary conditions listed above, i.e.\ also \emph{free right end} or \emph{pinned right end} boundary conditions are allowed, lead to well-posedness with uniform exponential decay of the energy functional.
 
 \begin{lemma}
  For the Euler-Bernoulli beam of Example \ref{exa:euler-bernoulli_beam} assume that $\rho, EI: [0,1] \rightarrow \R$ are uniformly positive and Lipschitz continuous.
  Then, for $\mathfrak{A}$ and the following choices of $\mathfrak{R}: \dom(\mathfrak{R}) = \dom(\mathfrak{A}) \rightarrow \K^4$, the pair $(\mathfrak{A}, \mathfrak{R})$ has property ASP:
  \[
   \mathfrak{R} x
    =
   \left( \begin{array}{c} (\H_1 x_1)(0) \\ (\H_1 x_1)'(0) \\ (\H_2 x_2)(0) \\ (\H_2 x_2)'(1) \end{array} \right) \, \text{or} \,
    \left( \begin{array}{c} (\H_1 x_1)(0) \\ (\H_1 x_1)'(0) \\ (\H_2 x_2)'(0) \\ (\H_2 x_2)(1) \end{array} \right) \, \text {or} \,
    \left( \begin{array}{c} (\H_1 x_1)(0) \\ (\H_2 x_2)(0) \\ (\H_2 x_2)'(0) \\ (\H_1 x_1)'(1) \end{array} \right) \, \text{or} \, 
    \left( \begin{array}{c} (\H_1 x_1)'(0) \\ (\H_2 x_2)(0) \\ (\H_2 x_2)'(0) \\ (\H_1 x_1)(1) \end{array} \right).
  \]
 Moreover, for the following choices of $\mathfrak{R}': \dom(\mathfrak{R}') = \dom(\mathfrak{A}) \rightarrow \K^5$, the pair $(\mathfrak{A}, \mathfrak{R}')$ has property $\mathrm{AIEP}_\tau$
  \begin{align*}
   \mathfrak{R}' x
     &= \left( \begin{array}{c} (\H x)(0) \\ (\H_1 x_1)'(0) \, \text{or} \, (\H_2 x_2)'(0) \\ (\H_1 x_1)(1) \, \text{or} \, (\H_2 x_2)'(1) \\ (\H_1 x_1)'(1) \, \text{or} \, (\H_2 x_2)(1) \end{array} \right),
     \quad \text{or} \\
   \mathfrak{R}' x
    &= \left( \begin{array}{c} (\H x)(0) \\ (\H x)(1) \\ (\H_{j_0} x_{j_0})'(\z_0) \end{array} \right)
    \quad \text{for some } j_0 \in \{1, 2\}, \z_0 \in \{0, 1\}
  \end{align*}
 In particular, for the following choices of $\mathfrak{R}'$, the pair $(\mathfrak{A}, \mathfrak{R}')$ has both properties ASP and $\mathrm{AIEP}_\tau$:
  \begin{align*}
   \mathfrak{R}' x
    &= \left( \begin{array}{c} (\H x)(0) \\ (\H_1 x_1)'(0) \\ (\H_1 x_1)(1) \, \text{or} \, (\H_2 x_2)'(1) \\ (\H_2 x_2)(1) \end{array} \right),
    \quad \text{or} \\
   \mathfrak{R}' x
    &= \left( \begin{array}{c} (\H x)(0) \\ (\H_2 x_2)'(0) \\ (\H_1 x_1)(1) \, \text{or} \, (\H_2 x_2)'(1) \\ (\H_1 x_1)'(1) \end{array} \right),
    \quad \text{or} \\
   \mathfrak{R}' x
    &= \left( \begin{array}{c} (\H x)(0) \\ (\H x)(1) \\ (\H_{j_0} x_{j_0})'(\z_0) \end{array} \right)
    \quad \text{for some } j_0 \in \{1, 2\}, \z_0 \in \{0, 1\}
  \end{align*}
 \end{lemma}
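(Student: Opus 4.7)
The plan is to split the proof into the four ASP claims and the $\mathrm{AIEP}_\tau$ claims. Writing $y = \H x = (u,v)$, the eigenfunction equation $\mathfrak{A} x = \ii \beta x$ becomes the coupled system $v'' = -\ii\beta\rho u$, $u'' = \ii\beta v/EI$, which for $\beta \neq 0$ is equivalent to the scalar fourth-order ODE $(EI u'')'' = \beta^2 \rho u$ (together with $v = -\ii EI u''/\beta$). Since $\rho$ and $EI$ are Lipschitz continuous and uniformly positive, this is a regular linear ODE whose solutions are uniquely determined by the Cauchy data $(u, u', EIu'', (EIu'')')$ at any single point. For $\beta = 0$, both $u$ and $v$ are affine and each of the listed $\mathfrak{R}$ immediately forces $u \equiv v \equiv 0$, so the whole work concerns $\beta \neq 0$.

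For the ASP claim, each of the four listed $\mathfrak{R}$ prescribes three of the four Cauchy components at one endpoint and one datum at the opposite endpoint. My plan is to combine the power identity $0 = 2\Re\sp{\mathfrak{A} x}{x}_\H = 2\Re[y_1'\bar y_2 - y_2'\bar y_1]_0^1$ (valid for any eigenfunction with real $\beta$) with the Green identity obtained by pairing $(EIu'')'' = \beta^2\rho u$ with $\bar u$, namely $[v'\bar u - v\bar u']_0^1 + \int_0^1 EI|u''|^2 \dd\z = \beta^2\int_0^1 \rho|u|^2 \dd\z$, whose imaginary part forces the remaining boundary product at the opposite endpoint to be real. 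Together with the three already-vanishing Cauchy components at the main endpoint, these two scalar constraints supply the missing Cauchy datum, after which ODE uniqueness yields $x \equiv 0$. The four cases follow the same skeleton, differing only in which boundary product ($u\bar v'$, $u'\bar v$ at $\z = 0$ or $\z = 1$) is under control.

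For the $\mathrm{AIEP}_\tau$ part, the plan is to adapt the sideways-energy / multiplier argument of Proposition 2.12 in \cite{AugnerJacob_2014} to the present second-order setting. Given a bounded high-frequency sequence $(x_n,\beta_n)$ with $(\mathfrak{A} - \ii\beta_n) x_n \to 0$ in $X$ and $\mathfrak{R}' x_n \to 0$, I would pair the approximate eigenvalue equation with a spatial multiplier of the form $q(\z) y_n$ or $q(\z) y_n'$, with $q \in C^1([0,1])$ chosen to match the boundary components controlled by $\mathfrak{R}'$, and integrate by parts twice. The resulting integrated identity bounds a positive quadratic form in $y_n, y_n'$ by the prescribed boundary data plus $o(1)$ lower-order terms, which are compact by Rellich-Kondrachov (since $\dom(\mathfrak{A})$ embeds compactly into $X$). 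Each listed $\mathfrak{R}'$ contains the full trace $(\H x_n)(0)$ together with one further first-derivative datum, which is enough to propagate the estimate across $[0,1]$ and conclude $x_n \to 0$ in $X$; revisiting the identity with the improved information then yields $\tau(\H x_n) \to 0$.

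The hard part will be the ASP step: combining the power identity with the Green identity only forces a boundary product of the form $u'(1)\bar v(1) = 0$, which leaves two sub-cases ($u'(1) = 0$ or $v(1) = 0$) rather than a single conclusion. In each sub-case the boundary set becomes over-determined (five conditions on a four-dimensional solution space), so ruling out non-trivial solutions requires a further independent multiplier identity (for example pairing with $(1-\z)\bar u'$ or with $\bar v$), chosen so that its boundary terms actually vanish on the already-known BCs and the interior contribution is sign-definite; the Lipschitz regularity of $\rho$ and $EI$ enters precisely at this point to keep the resulting lower-order commutator terms controllable. The analogous subtlety in the $\mathrm{AIEP}_\tau$ part is the coupling of $u$ and $v$ through the second-order structure of $\mathfrak{A}$, which obstructs a direct transfer of the first-order multiplier of \cite{AugnerJacob_2014} and forces a careful choice of the weight $q$.
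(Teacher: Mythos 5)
Your reduction to the fourth-order ODE $(EI\,u'')''=\beta^2\rho u$ and the observation that each listed $\mathfrak{R}$ fixes three of the four Cauchy components $(u,u',v,v')$ at one endpoint are fine, but the mechanism you propose for recovering the fourth component collapses: the two scalar constraints you intend to combine are the \emph{same} constraint. Writing $y=\H x=(u,v)$ with $EI\,u''=\ii\beta v$, the power identity reads $\Re\left[u'\bar v-v'\bar u\right]_0^1=0$, while the Green identity gives $\ii\beta\left[v'\bar u-v\bar u'\right]_0^1=\beta^2\int_0^1\rho\abs{u}^2\dd\z-\int_0^1 EI\abs{u''}^2\dd\z\in\R$, whose imaginary part is $\Re\left[v'\bar u-v\bar u'\right]_0^1=0$; since $\Re(v\bar u')=\Re(u'\bar v)$, this is the power identity with the sign flipped. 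You therefore obtain a single real relation (in the first case, $\Re\left(u'(1)\bar v(1)\right)=0$, not $u'(1)\bar v(1)=0$), which says nothing about the missing datum $v'(0)$. You candidly flag that a ``further independent multiplier identity'' is needed, but you do not exhibit one, and that is exactly where the whole difficulty of the ASP claim sits: the paper itself does not prove ASP (or AIEP) here at all, but cites Lemma 4.2.9 and Corollary 4.2.10 of \cite{Augner_2016} and the companion article \cite{Augner_2018+a}. As written, the ASP part is a plan whose central step is both unsupported and, in the form stated, incorrect.

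The $\mathrm{AIEP}_\tau$ part has a quantitative gap of its own. Pairing the residual $(\mathfrak{A}-\ii\beta_n)x_n\to 0$ with the multiplier $q(\z)y_n'$ does not produce an $o(1)$ identity, because $\norm{(\H x_n)'}_{L_2}$ is only $O(\sqrt{\abs{\beta_n}})$ for such sequences (this is precisely Lemma \ref{lem:6.1}); the paper's argument works with the normalised multipliers $\tfrac{\ii q}{\beta_n}(\H x_n)'$ and $\tfrac{q}{\beta_n}P_2 x_n'$ for exactly this reason. The appeal to Rellich--Kondrachov is also vacuous here: $\norm{\mathfrak{A}x_n}_X\approx\abs{\beta_n}\norm{x_n}_X\to\infty$, so the sequence is not bounded in the graph norm of $\dom(\mathfrak{A})$ and no compact embedding is available. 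Finally, the passage from $x_n\to 0$ in $X$ to $\tau(\H x_n)\to 0$ is not a matter of ``revisiting the identity'': it is the content of Lemma \ref{lem:6.2}, which needs both normalised multipliers above together with Lemma \ref{lem:6.1} and a case-by-case choice of the weight $q$ (vanishing at one endpoint, positive at the other). Your proposal reproduces the general multiplier philosophy of the paper but, in its current form, neither the ASP nor the $\mathrm{AIEP}_\tau$ argument closes.
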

 
 \begin{proof}
 Partly, this is part of Proposition 2.9 in \cite{AugnerJacob_2014}. For the full proof of properties ASP and AIEP considered here, except for the latter case, and even in the more general setting of a chain of Euler-Bernoulli beams, see the upcoming article \cite{Augner_2018+a}.
 In these cases it remains to prove property $\mathrm{AIEP}_\tau$.
 This follows from property AIEP and Lemma \ref{lem:6.2} in the appendix.
 Let us prove the statement for the choice $\mathfrak{R}' x = ((\H x)(0), (\H x)(1), (\H_1 x_1)'(0))$, then it is clear how the remaining other choices for the fifth component can be handled.
 First of all, the pair $(\mathfrak{A}, \mathfrak{R}')$ has property ASP which can be seen by using e.g.\ \cite[Lemma 4.2.9]{Augner_2016} and in fact is a special case of \cite[Corollary 4.2.10]{Augner_2016}.
 Then, by \cite[(4.28) on p.\ 108]{Augner_2016} in the proof of \cite[Proposition 4.3.19]{Augner_2016}, for every sequence $(x_n, \beta_n)_{n \geq 1} \subseteq \dom(\mathfrak{A}) \times \R$ with $\sup_{n \in \N} \| x_n \|_X < \infty$ and $\abs{\beta_n} \rightarrow \infty$ such that $\mathfrak{A} x_n - \ii \beta_n x_n \rightarrow 0$ in $X$, and for every $q \in C^2([0,1];\R)$ one has the equality
  \begin{align*}
   &\Re \sp{x_n}{(2q'\H - q\H')x_n}_{L_2}
    \\
    &= \left[ - 2 \Re \sp{(\H_2 x_{n,2})'(\z)}{\tfrac{iq(\z)}{\beta_n} (\H_1 x_{n,1})'(\z)}_\K
     - \sp{x_n(\z)}{(q \H)(\z) x_n(\z)}_\K \right.
     \\
     &\quad
     \left.
     - \Re \sp{(\H_2 x_{n,2})'(\z)}{\tfrac{iq'(\z)}{\beta_n} (\H_1 x_{n,1})(\z)}_\K
     + \Re \sp{(\H_2 x_{n,2})'(\z)}{\tfrac{iq'(\z)}{\beta_n} (\H_1 x_{n,1})(\z)}_\K \right]_0^1
     + o(1)
  \end{align*}
 where $o(1)$ denotes terms  that vanish as $n \rightarrow \infty$.
 Also $\frac{\H x_n}{\beta_n} \rightarrow 0$ in $C^1([0,1];\K^2)$ has been shown there.
 Therefore, if we additionally assume that $\mathfrak{R}'x_n \rightarrow 0$ and take $q \in C^2([0,1];\R)$ such that
  \[
   2 q' \H - q \H \geq \varepsilon I,
    \quad
    \text{a.e.\ } \z \in (0,1),
  \]
 which is possible by the coercivity of $\H$ and the uniform boundedness of $\H'$, we obtain that
  \[
   \varepsilon \norm{x_n}_{L_2}^2
    \leq \sp{x_n}{(2q'\H - q \H)x_n}_{L_2}
    = o(1)
  \]
 and thus $x_n \rightarrow 0$ in $X$. This shows property AIEP.
 By Lemma \ref{lem:6.2} in the Appendix, it follows that $\tau(\H x_n) \rightarrow 0$ as well, so that $(\mathfrak{A}, \mathfrak{R}')$ has properties ASP and $\mathrm{AIEP}_\tau$.
 \end{proof}

 \begin{example}
  Consider the system of Figure \ref{fig:Diagram_Damper-String-Beam-System} consisting of a string which is damped at the left end, and is interconnected at the right end with an Euler-Bernoulli beam.
 \begin{figure}
  \centering
  \includegraphics[scale = 0.8]{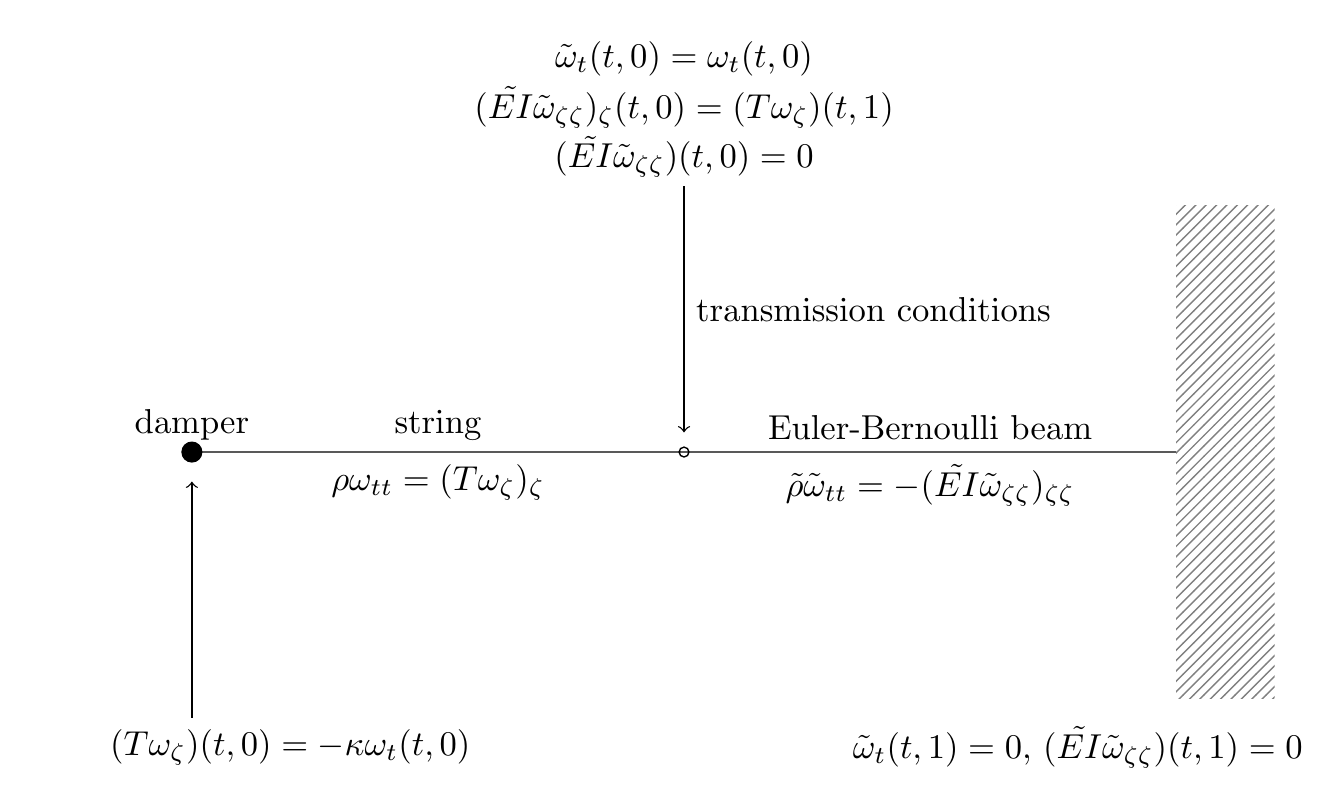}
	\caption{A damper-string-beam system.}
	\label{fig:Diagram_Damper-String-Beam-System}
 \end{figure}
  We denote by $\omega(t,\z)$ and $\tilde \omega(t,\z)$ the transversal position of the string and the Euler-Bernoulli beam at time time $t \geq 0$ and position $\z \in (0,1)$, respectively. (Here, w.l.o.g.\ we may and assume that both the string and the beam have unit length.)
  Moreover, we denote by $\rho(\z)$ and $\tilde \rho(\z)$ the mass density times transversal area at position $\z \in (0,1)$ for the string and the Euler-Bernoulli beam, respectively, by $T(\z)$ Young's modulus of the string and by $\tilde{EI}(\z)$ the elasticity times moment of inertia per area element of the Euler-Bernoulli beam. We assume that $\rho, \tilde \rho, T, \tilde{EI}$ are all Lipschitz-continuous and uniformly positive on $[0,1]$.
  Then the dynamics of the system are described by the evolution equations for the string and the beam,
   \begin{align*}
    \rho(\z) \omega_{tt}(t,\z)
     &= + (T \omega_\z)_{\z}(t,\z) \\
    \tilde \rho(\z) \tilde \omega_{tt}(t,\z)
     &= - (\tilde{EI} \tilde{\omega}_{\z\z})_{\z\z}(t, \z),
     &t \geq 0, \, \z \in (0,1)
   \end{align*}
  the damping by feedback boundary condition for the string at the left end
   \[
    (T \omega_\z)(t,0)
     = - \kappa \omega_t(t,0),
     \quad
     t \geq 0
   \]
  for some constant $\kappa > 0$, the transmission conditions
   \begin{align*}
    \tilde \omega_t(t,0)
     &= \omega_t(t,1)
     \\
    (\tilde{EI} \tilde \omega_{\z\z})_\z(t,0)
     &= - (T \omega_\z)(t,1)
     \\
    (\tilde{EI} \tilde \omega_{\z\z})(t,0)
     &= 0,
     &t \geq 0
   \end{align*}
  an the conservative \emph{pinned end} boundary conditions of the Euler-Bernoulli beam at the right end
   \begin{align*}
    \tilde \omega_t(t,1)
     &= 0
     \\
    (\tilde{EI} \tilde{\omega}(t,\z))(t,1)
     &= 0,
     &t \geq 0.
   \end{align*}
 \end{example} 
  The total energy of this system consists of the string part and the beam part of the energy
   \begin{align*}
    H_{\mathrm{tot}}(t)
     &= H_{\mathrm{WE}}(t) + H_{\mathrm{EB}}(t)
     \\
     &= \frac{1}{2} \left[ \int_0^1 \rho(\z) \abs{\omega_t(t,\z)}^2 + T(\z) \abs{\omega_{\z}}^2 \dd \z
       + \int_0^1 \tilde \rho(\z) \abs{\tilde \omega_t(t,\z)}^2 + \tilde{EI}(\z) \abs{\tilde \omega_{\z\z}(t,\z)}^2 \dd \z \right]
   \end{align*}
  and along solutions of the systems which are sufficiently regular, one readily computes
   \begin{align*}
    \frac{\dd}{\dd t} H_{\mathrm{tot}}(t)
     &= \Re \left[ \sp{(T \omega_\z)(t,\z)}{\omega_t(t,\z)}_\K
      + \sp{- (\tilde {EI} \tilde \omega_{\z\z})_\z(t,\z)}{\tilde \omega_t(t,\z)}_\K
      + \sp {(\tilde {EI} \tilde \omega_{\z\z})(t,\z)}{\tilde \omega_{t\z}}_\K \right]_0^1
      \\
     &= - \kappa \abs{\omega_t(t,0)}^2
     \leq 0,
     \quad
     t \geq 0.
   \end{align*}
  Hence, the system is dissipative, and the corresponding operator of port-Hamiltonian type $\hat A$ below generates a contractive $C_0$-semigroup.
  Since the subsystems are a string modelled by the one-dimensional wave equation and an Euler-Bernoulli beam, the port-Hamiltonian formulation reads as follows.
   \begin{align*}
    &X^1 = L_2(0,1;\K^2) \quad \text{with} \quad \H^1 = \operatorname{diag}\left( \frac{1}{\rho},T \right), \quad x^1(t,\z) = \left( \begin{array}{c} (\rho \omega_t)(t,\z) \\ \omega_\z(t,\z) \end{array} \right) \\
    &X^2 = L_2(0,1;\K^2) \quad \text{with} \quad \H^2 = \operatorname{diag}\left( \frac{1}{\tilde \rho},\tilde{EI} \right), \quad x^2(t,\z) = \left( \begin{array}{c} (\tilde \rho \tilde \omega_t)(t,\z) \\ \omega_{\z\z}(t,\z) \end{array} \right),
   \end{align*}
  there is no dynamic controller (i.e.\ $X_c^j = \{0\}$) and the differential operators are given by
   \begin{align*}
    \mathfrak{A}^1 x^1
     &= \left[ \begin{array}{cc} 0 & 1 \\ 1 & 0 \end{array} \right] \frac{\partial}{\partial \z} (\H^1 x^1)
     \\
    \mathfrak{A}^2 x^2
     &= \left[ \begin{array}{cc} 0 & 1 \\ -1 & 0 \end{array} \right] \frac{\partial^2}{\partial \z^2} (\H^2 x^2)
   \end{align*}
  and we get
   \begin{align*}
    A x
     &= \left( \begin{array}{c} \mathfrak{A}^1 x^1 \\ \mathfrak{A}^2 x^2 \end{array} \right)
     \\
    \dom(A)
     &= \{(x^1, x^2) \in X^1 \times X^2: \, (\H^1 x^1) \in H^1(0,1;\K^2), \, (\H^2 x^2) \in H^2(0,1;\K^2), \, (\H^1_2 x^1_2)(0) = - \kappa (\H^1_1 x^1_1)(0),\\
     &\qquad
     (\H^2_1 x^2_1)(0) = (\H^1_1 x^1_1)(1), \, (\H^2_2 x^2_2)'(0) = (\H^1_2 x^1_2)(0), \, (\H^2_2 x^2_2)(0), (\H^2 x^2)(1) = 0 \}.
   \end{align*}
  For this operator one has
   \[
    \Re \sp{Ax}{x}_X
     \leq - \kappa \abs{(\H_1^1 x_1^1)(0)}^2
     = - \frac{1}{\kappa} \abs{(\H^1_2 x^1_2)(0)}^2
     = - \frac{1}{2} \left[\kappa \abs{(\H_1^1 x_1^1)(0)}^2 + \frac{1}{\kappa} \abs{(\H^1_2 x^1_2)(0)}^2 \right],
     \quad
     x \in \dom(\hat A).
   \]
  Let us give its formulation as a serial interconnection of port-Hamiltonian systems:
   \begin{align*}
    \hat {\mathfrak{A}}^j
     &= \mathfrak{A}^j|_{\dom(\hat {\mathfrak{A}}^j)}
     \\
    \dom(\hat {\mathfrak{A}}^1)
     &= \{ x^1 \in \dom(\mathfrak{A}^1): \, (\H^1_2 x^1_2)(0) = - \kappa (\H^1_1 x^1_1)(0) \},
     \\
    \dom(\hat {\mathfrak{A}}^2)
     &= \{ x^1 \in \dom(\mathfrak{A}^1): \, (\H^2_2 x^2_2)(0) = 0, \, (\H^2 x^2)(0) = 0 \}
   \end{align*}
  and the conditions of the stability theorems are satisfied for
   \begin{align*}
    \mathfrak{R}^1 x^1 = (\sqrt{\kappa} (\H^1_1 x^1_1)(0), \sqrt{\kappa}^{-1} (\H^1_2 x^1_2)(0))
     \\
    \mathfrak{B}^2 x^2 = ((\H^2 x^2)(0), (\H^2 x^2)(1), (\H^2_2 x^2_2)'(0)).
   \end{align*}
  Now, the pairs $(\hat {\mathfrak{A}}^1, \mathfrak{R}^1)$ and $(\hat {\mathfrak{A}}^2, \mathfrak{B}^2)$ have properties ASP and $\mathrm{AIEP}_{\tau^j \circ \H^j}$ since both the pairs $(\mathfrak{A}^1, (\H^1 x^1)(0))$ and $(\mathfrak{A}^2, ((\H^2 x^2)(0), (\H^2 x^2)(1), (\H^2_1 x^2_1)(0))$ have properties ASP and $\mathrm{AIEP}_{\tau^j \circ \H^j}$.
  Therefore, by Theorems \ref{thm:asymptotic_stability} and \ref{thm:exp_stability} the operator $\hat A$ generates a uniformly exponentially stable contraction semigroup on $X = X^1 \times X^2$, i.e.\ there are constants $M \geq 1$ and $\eta < 0$ such that uniformly for all finite energy initial data the energy decays uniformly exponentially,
   \[
    H_{\mathrm{tot}}(t)
     \leq M \ee^{\eta t} H_{\mathrm{tot}}(0),
     \quad
     t \geq 0.
   \]
 \qed

 \begin{example}
  Consider the following interconnection of a string modelled by a wave equation, damped at the left end by a spring-mass damper and attached to an Euler-Bernoulli beam at the right, and where the latter is pinned at the right end, see Figure \ref{fig:Euler-Bernoulli-Beam-Spring-Mass-Damper-String-System}.
  \begin{figure}
	\centering
	\includegraphics[scale = 0.8]{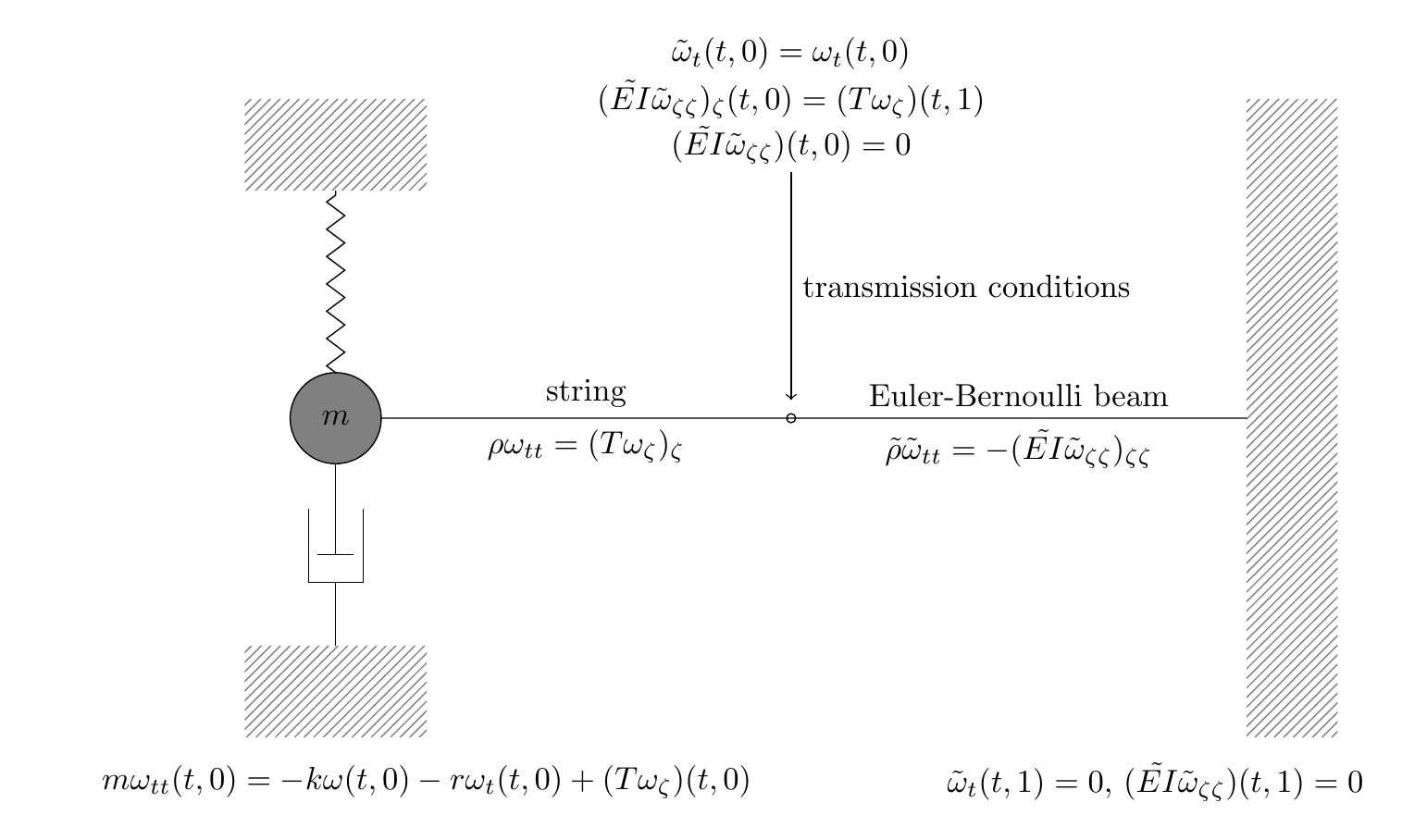}
	\caption{An Euler-Bernoulli-Beam-Spring-Mass-Damper-String-System.}
	\label{fig:Euler-Bernoulli-Beam-Spring-Mass-Damper-String-System}
 \end{figure}
  For the interconnection, the transmission conditions
   \[
    \omega(t,1) = \tilde \omega_t(t,0),
     \,
     (T \omega_\z)(t,1) + (\tilde{EI} \tilde \omega_{\z\z})_\z(t,0) = 0,
     \,
     \tilde \omega_\z(t,0) = 0,
     \quad
     t \geq 0.
   \]
  are assumed, i.e.\ in particular the transversal position of the string and the beam continuous is at the joint and no force is acting on the joint.
 The spring-mass damper is modelled by the ODE
  \[
   m \omega_{tt}(t,0)
    = - k \omega(t,0) - r \omega_t(t,0) + (T \omega_{\z})(t,0),
  \]
 i.e.\ the tip of mass $m > 0$ moves under the influence of forces from a spring with spring constant $k > 0$ and a damper with damping constant $r > 0$, as well as the stress $(T \omega_\z)(t,0)$ of the string at the left end.
 The pinned end boundary conditions of the Euler-Bernoulli beam are modelled by
  \[
   \tilde \omega(t,1) = 0,
    \,
    (\tilde{EI} \tilde \omega_{\z\z})(t,1) = 0.
  \]
\end{example}
 The total energy of this system is given by the potential and kinetic energies of the spring, the string and the beam
  \begin{align*}
   H_{\mathrm{tot}}(t)
    &= H_{\mathrm{WE}}(t) + H_{\mathrm{EB}}(t) + H_{m,k}(t)
    \\
    &= \frac{1}{2} \left[ \int_0^1 \rho(\z) \abs{\omega_t(t,\z)}^2 \dd \z + T(\z) \abs{\omega_\z(t,\z)} \dd \z \right]
     + \frac{1}{2} \left[ \int_0^1 \tilde \rho(\z) \abs{\tilde \omega_t(t,\z)}^ 2 + \tilde{EI}(\z) \abs{\tilde \omega_{\z\z}(t,\z)}^2 \dd \z \right]
     \\
     &\quad
     + \frac{1}{2} \left[ m \abs{\omega_t(t,0)}^2 + k \abs{\omega(t,0)}^2 \right].
  \end{align*}
 Then, the formal energy balance along sufficiently regular solutions shows that
  \begin{align*}
   \frac{\dd}{\dd t} H_{\mathrm{tot}}(t)
    &= - r \abs{\omega_t(t,0)}^2
    \leq 0,
    \quad
    t \geq 0.
  \end{align*}
 Therefore, the system is \emph{dissipative} and after reformulation as network of port-Hamiltonian type, it is clear that well-posedness in the sense of unique solutions with non-increasing energy holds for all sufficiently regular initial data.
 For this end, we take $X^1 = L_2(0,1;\K^2)$, $X^2 = L_2(0,1;\K^2)$ and $X_c^2 = \{0\}$ as in the previous example, but this time
  \[
   X_c^1 = \K^2,
    \quad
    x_c^1 = \left( \begin{array}{c} \omega(t,0) \\ \omega_t(t,0) \end{array} \right),
    \quad
    \norm{x_c}_{X_c^1}^2 = k \abs{x_{c,1}^1}^2 + m \abs{x_{c,2}^1}^2.
  \]
 Also the operators $\mathfrak{A}^1$ and $\mathfrak{A}^2$ are defined as before, but now we additionally have the control system given by the operators
  \[
   A_c^1 = \left[ \begin{array}{cc} 0 & 1 \\ - \tfrac{k}{m} & - \tfrac{r}{m} \end{array} \right],
    \,
    B_c^1 = \left[ \begin{array}{c} 0  \\ 1 \end{array} \right],
    \,
    C_c^1 = (B_c^1)^* = \left[ \begin{array}{cc} 0 & 1 \end{array} \right],
    \,
    D_c^1 = 0
  \]
 for $U_c^1 = \K$.
 The resulting operator $\hat A: \dom(\hat A) \subseteq \hat X = X \times X_c = X^1 \times X^2 \times X_c^1  \rightarrow \hat X$ is therefore given by
  \begin{align*}
   \hat A \left( \begin{array}{c} x^1 \\ x^2 \\ x_c^1 \end{array} \right)
    &= \left( \begin{array}{c} \mathfrak{A}^1 x^1 \\ \mathfrak{A}^2 x^2 \\ A_c^1 x_c^1 + B_c^1 (\H^1_2 x^1_2)(0) \end{array} \right)
    \\
   \dom(\hat A)
    &= \left\{ (x^1, x^2, x_c^1) \in \dom(\mathfrak{A}^1) \times \dom(\mathfrak{A}^2) \times X_c^1: \, (\H^1_1 x^1_1)(0) = - C_c^1 x_c^1, \, (\H^2_1 x^2_1)(0) = (\H^1_1 x^1_1)(1),
     \right. \\ &\qquad \left.
    (\H^2_2 x^2_2)'(0) = - (\H^1_2 x^1_2)(1), \, (\H^2_2 x^2_2)(0) = 0, (\H^2 x^2)(1) = 0 \right\}
  \end{align*}
 and it is dissipative with
  \[
   \Re \sp{\hat A \hat x}{\hat x}_{\hat X}
    = - r \abs{x_{c,2}^1}^2
    = - r \abs{(\H^1_1 x^1_1)(0)}^2,
    \quad
    \hat x = (x^1, x^2, x_c^1) \in \dom(\hat A).
  \]
 As a result, by Theorem \ref{thm:generation_network-dynamic} the operator $\hat A$ generates a strongly continuous contraction semigroup on $\hat X$.
 Let us investigate stability properties next.
 For this end, we write
  \begin{align*}
   \hat {\mathfrak{A}}^1 \hat x^1
    &= \left( \begin{array}{c} \mathfrak{A}^1 x^1 \\ A_c^1 x_c^1 + B_c^1 (\H^1_2 x^1_2)(0) \end{array} \right),
    \\
   \dom(\hat {\mathfrak{A}}^1)
    &= \{ \hat x^1 = (x^1, x_c^1) \in \dom(\mathfrak{A}^1) \times X_c^1: \, (\H^1_1 x^1_1)(0) = - C_c^1 x_c^1 \}
    \\
   \hat {\mathfrak{A}}^2 \hat x^2
     &= \mathfrak{A}^1 x^1
     \\
    \dom(\hat {\mathfrak{A}}^2)
     &= \{ \hat x^2 = x^2 \in \dom(\mathfrak{A}^2): \, (\H^2_2 x^2_2)(0) = 0, \, (\H^2 x^2)(1) = 0 \}.
  \end{align*}
 Then $\hat A \cong \operatorname{diag}\, (\hat {\mathfrak{A}}^1, \hat {\mathfrak{A}}^2)|_{\dom(\hat A)}$ for
  \[
   \dom(\hat A)
    = \left\{ \hat x = (\hat x^1, \hat x^2) \in \dom(\hat {\mathfrak{A}}^1) \times \dom(\hat {\mathfrak{A}}^2): \, \left( \begin{smallmatrix} (\H^1_1 x^1_1)(1) \\ (\H^1_2 x^1_2)(1) \end{smallmatrix} \right) = \left( \begin{smallmatrix} (\H^2_1 x^2_1)(0) \\ - (\H^2_2 x^2_2)'(0) \end{smallmatrix} \right) \right\}.
  \]
 To show uniform exponential energy decay, by Theorems \ref{thm:asymptotic_stability} and \ref{thm:exp_stability} it suffices to prove that the pairs $(\hat {\mathfrak{A}}^1, (\H^1_1 x^1_1)(0))$ and $(\hat {\mathfrak{A}}^2, ((\H^2_1 x^2_1)(0), (\H^2_2 x^2_2)'(0))$ have properties ASP and $\mathrm{AIEP}_\tau$.
 The latter, we have already seen in the previous example, as long as $\tilde \rho, \tilde{EI} \in \Lip(0,1;\R)$.
 It remains to prove these properties for the pair $(\hat {\mathfrak{A}}^1, (\H^1_1 x^1_1)(0))$. We assume that $\rho, T \in \Lip(0,1;\R)$ are Lipschitz continuous as well.
 For the matrix $A_c^1$ we can calculate the eigenvalues as $\lambda_{1,2} = - \frac{r \pm \sqrt{r^2 - 4km}}{2m} \in \C_0^-$, thus $A_c^1$ is a Hurwitz matrix and $(\ee^{t A_c^1})_{t \geq 0}$ uniformly exponentially stable on $X_c^1$. Since the pair $(\mathfrak{A}^1, (\H^1 x^1)(0))$ has properties ASP and $\mathrm{AIEP}_\tau$, this implies that also the pair $(\hat {\mathfrak{A}}^1, (\H^1_1 x^1_1(0))$ has properties ASP and AIEP. Uniform exponential stability of the semigroup $(\hat T(t))_{t \geq 0}$ on $\hat X$ thus follows by Theorems \ref{thm:asymptotic_stability} and \ref{thm:exp_stability}.
\qed

 \section{Conclusion and Open Problems}
 \label{sec:conclusion}
 
 In this paper, we have considered dissipative systems resulting from conservative or dissipative interconnection of several infinite-dimensional port-Hamiltonian systems $\mathfrak{S}^j = (\mathfrak{A}^j, \mathfrak{B}^j, \mathfrak{C}^j)$ of arbitrary, possibly distinct orders $N^j$ via boundary control and observation and static or dynamic feedback via a finite-dimensional linear control system $\Sigma_c = (A_c, B_c, C_c, D_c)$ such that the total, interconnected system on the product energy Hilbert space $\hat X$ becomes dissipative.
 The generation theorems from single infinite-dimensional port-Hamiltonian systems (or, port-Hamiltonian systems of the same differential order $N^j = N$ for all $j \in \mathcal{J}$) with static or dynamic boundary feedback have been shown to directly extend to systems of mixed-order port-Hamiltonian systems: The existence of a contractive $C_0$-semigroup $(\hat T(t))_{t \geq 0}$ acting as the (unique) solution operator for the abstract Cauchy problem
  \[
   \frac{\dd}{\dd t} \hat x(t) = \hat A \hat x(t) \, (t \geq 0),
    \quad
    \hat x(0) = \hat x_0 \in \hat X
  \]
 is equivalent to the operator $\hat A$ simply being dissipative (w.r.t.\ the energy inner product $\sp{\cdot}{\cdot}_{\hat X}$).
 Therefore, whenever beam and wave equations are interconnected with each other and finite dimensional control systems via boundary control and observation, it is enough to choose the boundary and interconnection conditions such that the energy does not increase along classical solutions.\newline
 For multi-component systems consisting of subsystems of finite dimensional or infinite-dimensional port-Hamiltonian type on an interval, we presented a scheme to ensure asymptotic and uniform exponential stability from the structure of the interconnection and dissipative elements. Especially, we applied the results to a chain of strings modelled by the wave equation and hinted at possible arrangements of beam-string-controller-dissipation structures leading to uniform stabilisation of the total interconnected system.\newline
 All results presented here are based on linear semigroup theory, especially the Arendt-Batty-Lyubich V\~u Theorem and the Gearhart-Greiner-Pr\"uss-Huang Theorem on stability properties for one-parameter semigroups of linear operators. Therefore, the techniques used are not accessible for nonlinear problems, e.g.\ nonlinear boundary feedback or nonlinear control systems which may be encountered in practice a lot.
 Whereas for the generation theorem the Komura-Kato Theorem is a nonlinear analogue to the Lumer-Phlipps Theorem for the generation of strongly continuous contraction semigroups by $m$-dissipative operators, handling stability properties for nonlinear systems is much more involved, see \cite{Augner_2018+} for some efforts in this direction.
 
 \section{Appendix: Some technical results on the Euler-Bernoulli Beam}
 \label{sec:appendix}

 Within this section we consider a port-Hamiltonian system operator $\mathfrak{A}$ of Euler-Bernoulli type, i.e.\ we assume that
  \[
   \mathfrak{A} x
    = \left( \left[ \begin{array}{cc} 0 & -1 \\ 1 & 0 \end{array} \right]  \frac{\partial^2}{\partial \z^2} + P_0(\z) \right) \left[ \begin{array}{cc} \H_1(\z) & 0 \\ 0 & \H_2(\z) \end{array} \right] \left( \begin{array}{c} x_1(\z) \\ x_2(\z) \end{array} \right)
  \]
 where we additionally assume that $x_1(\z), x_2(\z) \in \K$ are scalars and the Hamiltonian densities $\H_i \in \Lip(0,1)$ as well as the bounded perturbation $P_0 \in \Lip(0,1;\K^{2 \times 2})$ are Lipschitz continuous.
 We consider the situation that we have sequences $(x_n)_{n \geq 1} \subseteq \dom(\hat A) = \{ x \in L_2(0,1;\K^2): \H x \in H^2(0,1;\K^2) \}$ and $(\beta_n)_{n \geq 1} \subseteq \R$ such that the following hold (with convergence in $L_2(0,1;\K^d)$)
  \begin{equation}
   x_n \rightarrow 0,
   \quad
   |\beta_n| \rightarrow 0,
   \quad \text{and} \quad
   \mathfrak{A} x_n - \ii \beta_n x_n \rightarrow 0.
   \label{cond:sequence}
  \end{equation}
We first note that then also
 \[
  (\mathfrak{A} - P_0 \H) x_n - \ii \beta_n x_n \rightarrow 0,
 \]
Therefore, in the following we can ourselves often essentially restrict to the case $P_0 = 0$.
We investigate what can be said about the sequence of traces $\tau(\H x_n)$, if we additionally assume that parts of the trace, e.g.\ $(\H x_n)(0)$, are already known to converge to zero.

The first important observation is the following.

 \begin{lemma}
 \label{lem:6.1}
  Let $j \in \{1, 2\}$.
  Assume additionally that for both boundary points $\z \in \{0, 1\}$, either $(\H_j x_{n,j})(\z) \rightarrow 0$ or $(\H_j x_{n,j})'(\z) \rightarrow 0$ is known.
  Then
   \[
    \frac{1}{\sqrt{|\beta_n|}} \| (\H_j x_{n,j})' \|_{L_2(0,1)} \rightarrow 0,
     \quad
     n \rightarrow \infty.
   \]
 \end{lemma}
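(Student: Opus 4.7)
My plan is to test the second component of the equation $\mathfrak{A} x_n - \ii \beta_n x_n \rightarrow 0$ against $\overline{\H_j x_{n,j}}$, integrate by parts, and control the resulting boundary bracket using the hypothesis together with Sobolev trace estimates; I treat $j = 1$, the case $j = 2$ being analogous. As already observed, after subtracting the bounded perturbation $P_0 \H x_n$ we may assume $P_0 = 0$, so the relevant coordinate reads $\partial_\z^2(\H_1 x_{n,1}) = \ii \beta_n x_{n,2} + \epsilon_n$ with $\epsilon_n \rightarrow 0$ in $L_2(0,1)$. Pairing in $L_2$ with $\overline{\H_1 x_{n,1}}$ and integrating by parts gives
\[
 \norm{\partial_\z(\H_1 x_{n,1})}_{L_2}^2
  = \left[ \partial_\z(\H_1 x_{n,1}) \overline{\H_1 x_{n,1}} \right]_0^1
   - \ii \beta_n \sp{x_{n,2}}{\H_1 x_{n,1}}_{L_2}
   + o(1),
\]
and the hypothesis $x_n \rightarrow 0$ in $L_2$ immediately bounds the second summand by $\abs{\beta_n} \norm{\H_1}_\infty \norm{x_n}_{L_2}^2 = o(\abs{\beta_n})$. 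The same reasoning applied to the equation itself gives the crucial a priori bound $\norm{\partial_\z^2(\H_1 x_{n,1})}_{L_2} \leq \abs{\beta_n}\norm{x_n}_{L_2} + o(1) = o(\abs{\beta_n})$.

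It then suffices to show that the boundary bracket $B_n := \bigl[ \partial_\z(\H_1 x_{n,1}) \overline{\H_1 x_{n,1}} \bigr]_0^1$ is $o(\abs{\beta_n})$; dividing the identity above by $\abs{\beta_n}$ will then complete the proof. At each endpoint $\z_0 \in \{0, 1\}$ the hypothesis states that one of the two factors of $\partial_\z(\H_1 x_{n,1})(\z_0) \overline{\H_1 x_{n,1}(\z_0)}$ tends to zero; the other trace must be controlled in supremum norm via the Sobolev embedding $H^1(0,1) \hookrightarrow C([0,1])$:
\[
 \norm{\partial_\z(\H_1 x_{n,1})}_{\infty}
  \lesssim \norm{\partial_\z(\H_1 x_{n,1})}_{L_2} + \norm{\partial_\z^2(\H_1 x_{n,1})}_{L_2},
 \quad
 \norm{\H_1 x_{n,1}}_{\infty}
  \lesssim \norm{\H_1 x_{n,1}}_{L_2} + \norm{\partial_\z(\H_1 x_{n,1})}_{L_2}.
\]
Inserting the bounds $\norm{\partial_\z^2(\H_1 x_{n,1})}_{L_2} = o(\abs{\beta_n})$ and $\norm{\H_1 x_{n,1}}_{L_2} = o(1)$, either subcase of the boundary hypothesis leads to an estimate of the form $\abs{B_n} \leq o(1)\, \norm{\partial_\z(\H_1 x_{n,1})}_{L_2} + o(\abs{\beta_n})$, hence to the quadratic inequality
\[
 \norm{\partial_\z(\H_1 x_{n,1})}_{L_2}^2
  \leq o(1)\, \norm{\partial_\z(\H_1 x_{n,1})}_{L_2} + o(\abs{\beta_n}),
\]
whose resolution by completing the square (or Young's inequality) yields $\norm{\partial_\z(\H_1 x_{n,1})}_{L_2} = o(\sqrt{\abs{\beta_n}})$, which is precisely the claim.

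The main obstacle will be the bookkeeping in the boundary analysis: both endpoints and both subcases have to be combined simultaneously, and a priori the trace estimates only produce $O(\sqrt{\abs{\beta_n}})$ growth on the right-hand side, which would leave $\norm{\partial_\z(\H_1 x_{n,1})}_{L_2}/\sqrt{\abs{\beta_n}}$ merely bounded rather than tending to zero. The upgrade to $o(1)$ is forced only by the simultaneous presence of the small factor coming from the boundary hypothesis and the $o(\abs{\beta_n})$-bound on $\norm{\partial_\z^2(\H_1 x_{n,1})}_{L_2}$, which itself relies on $x_n \rightarrow 0$ in $L_2$; the interplay of these two ingredients via Young's inequality is what yields the sharp $o(\abs{\beta_n})$ control of $B_n$.
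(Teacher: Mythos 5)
Your argument is correct and is essentially the paper's own proof: both integrate $\|(\H_j x_{n,j})'\|_{L_2}^2$ by parts, use the equation together with $x_n \to 0$ in $L_2$ to obtain the a priori bound $\|(\H_j x_{n,j})''\|_{L_2} = o(|\beta_n|)$, and dispose of the boundary bracket by pairing the factor that vanishes by hypothesis with the remaining one. The only cosmetic difference is in that last step: the paper first shows that all four traces are themselves $o(|\beta_n|)$ and multiplies directly, whereas you bound the unknown trace by $\|(\H_j x_{n,j})'\|_{L_2} + o(|\beta_n|)$ via the Sobolev embedding and then close the resulting self-improving quadratic inequality with Young's inequality.
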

 
 \begin{proof}
 As $x_n \rightarrow 0$ and $(\mathfrak{A} - P_0 \H) x_n - \ii \beta_n \rightarrow 0$ in $L_2(0,1;\K^2)$, we also have that $\frac{1}{\beta_n} (\H_j x_{n,j})'' \rightarrow 0$ in $L_2(0,1)$, and also (already without any of the extra conditions) $\frac{1}{\beta_n} (\H_j x_{n,j})(\z), \frac{1}{\beta_n} (\H_j x_{n,j})'(\z) \rightarrow 0$ in $\K$ for $\z = 0, 1$.
 Therefore
  \begin{align*}
   &\frac{1}{|\beta_n|} \| (\H_j x_{n,j})' \|_{L_2(0,1)}^2
    \\
    &= \frac{1}{|\beta_n|} \int_0^1 \sp{ (\H_j x_{n,j})'(\z)}{(\H_j x_{n,j})'(\z)}_{\K} \dd \z
    \\
    &= - \frac{1}{|\beta_n|} \int_0^1 \sp{ (\H_j x_{n,j})(\z)}{(\H_j x_{n,j})''(\z)}_{\K} \dd \z
     + \frac{1}{|\beta_n|} \left[ \sp{ (\H_j x_{n,j})(\z)}{(\H_j x_{n,j})'(\z)}_{\K} \right]_0^1
    \\
    &\leq \| \H_j x_{n,j} \|_{L_2(0,1)} \frac{1}{|\beta_n|} \| (\H_j x_{n,j})' \|_{L_2(0,1)}
     + \sum_{\z=0}^1 \abs{(\H_j x_{n,j})(\z)} \frac{1}{|\beta_n|} \abs{(\H_j x_{n,j})'(\z)}
     \\
    &\longrightarrow 0,
    \quad
    n \rightarrow \infty
  \end{align*}
 where  in the last step we used the extra condition on the trace at boundary points $\z = 0, 1$.
 \end{proof}
 
 \begin{lemma}
 \label{lem:6.2}
  Let $(\beta_n)_{n \geq 1} \subseteq \R$ and $(x_n)_{n \geq 1} \subseteq \dom(\mathfrak{A})$ be as above, i.e.\ $\abs{\beta_n} \rightarrow \infty$, $\sup_{n \in \N} \norm{x_n}_X < \infty$, and $\mathfrak{A} x_n - \ii \beta_n x_n \rightarrow 0$ in $L_2(0,1;\K^2)$ and $P_0$ and $\H$ are Lipschitz-continuous.
   \begin{enumerate}
    \item
     \label{case-1}
     Assume that $(\H x_n)'(1) \rightarrow 0$, and $(\H^1 x_{n,1})(0) \rightarrow 0$ or $(\H_1 x_{n,1})'(0) \rightarrow 0$, and $(\H_2 x_{n,2})(0) \rightarrow 0$ or $(\H_2 x_{n,2})'(0) \rightarrow 0$ are known.
     Then
      \[
       (\H x_n)(1) \rightarrow 0.
      \] 
    \item
     \label{case-2}
     Assume that $(\H x_n)'(0) \rightarrow 0$, and $(\H_1 x_{n,1})(1) \rightarrow 0$ or $(\H_1 x_{n,1})'(1) \rightarrow 0$, and $(\H_2 x_{n,2})(1) \rightarrow 0$ or $(\H_2 x_{n,2})'(1) \rightarrow 0$ are known.
     Then
      \[
       (\H x_n)(0) \rightarrow 0.
      \] 
    \item
     \label{case-3}
     Assume that $(\H x_n)'(0) \rightarrow 0$ and $(\H x_n)'(1) \rightarrow 0$ are known.
     Then
      \[
       \tau(\H x_n) \rightarrow 0.
      \]
    \item
     \label{case-4}
     Assume that $(\H x_n)(0) \rightarrow 0$, and $(\H_1 x_{n,1})'(0) \rightarrow 0$ or $(\H_2 x_{n,2})'(0) \rightarrow 0$, as well as $|(\H_1 x_{n,1})(1)| + |(\H_1 x_{n,1})'(1)| \rightarrow 0$ or $|(\H_2 x_{n,2})(1)| + |(\H_2 x_{n,2})'(1)| \rightarrow 0$ are known.
     Then
      \[
       \tau(\H x_n) \rightarrow 0.
      \]
    \item
     \label{case-5}
     Assume that $(\H x_n)(1) \rightarrow 0$, and $(\H_1 x_{n,1})'(1) \rightarrow 0$ or $(\H_2 x_{n,2})'(1) \rightarrow 0$, as well as $|(\H_1 x_{n,1})(0)| + |(\H_1 x_{n,1})'(0)| \rightarrow 0$ or $|(\H_2 x_{n,2})(0)| + |(\H_2 x_{n,2})'(0)| \rightarrow 0$ are known.
     Then
      \[
       \tau(\H x_n) \rightarrow 0.
      \]
    \item
     \label{case-6}
     Assume that $(\H x_n)(1) \rightarrow 0$, and $(\H_1 x_{n,1})(0) \rightarrow 0$ or $(\H_1 x_{n,1})'(0) \rightarrow 0$, and $(\H_2 x_{n,2})(0) \rightarrow 0$ or $(\H_2 x_{n,2})'(0) \rightarrow 0$ are known.
     Then
      \[
       (\H x_n)'(0) \rightarrow 0.
      \] 
    \item
     \label{case-7}
     Assume that $(\H x_n)(0) \rightarrow 0$, and $(\H_1 x_{n,1})(1) \rightarrow 0$ or $(\H_1 x_{n,1})'(1) \rightarrow 0$, and $(\H_2 x_{n,2})(1) \rightarrow 0$ or $(\H_2 x_{n,2})'(1) \rightarrow 0$ are known.
     Then
      \[
       (\H x_n)'(1) \rightarrow 0.
      \]
     \item
      \label{case-8}
      Assume that $(\H x_n)(0) \rightarrow 0$ and $(\H x_n)(1) \rightarrow 0$ are known.
      Then
       \[
        \tau(\H x_n) \rightarrow 0.
       \]
   \end{enumerate}
 \end{lemma}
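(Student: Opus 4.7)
The plan is to handle all eight cases through a common integration-by-parts scheme anchored on Lemma~\ref{lem:6.1}. First I observe that the lower-order term $P_0 \H$ is irrelevant for the boundary analysis: since $P_0 \H x_n$ is bounded in $L_2(0,1;\K^2)$, the hypothesis $\mathfrak{A} x_n - \ii \beta_n x_n \to 0$ in $L_2$ splits into the two scalar identities
\begin{align*}
 -(\H_2 x_{n,2})'' - \ii \beta_n x_{n,1} &= g_{n,1}, \\
 (\H_1 x_{n,1})'' - \ii \beta_n x_{n,2} &= g_{n,2},
\end{align*}
with $(g_{n,j})_{n \geq 1}$ bounded in $L_2(0,1)$. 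In particular $\tfrac{1}{\beta_n}(\H_j x_{n,j})'' \to 0$ in $L_2(0,1)$, which is exactly the hypothesis used in Lemma~\ref{lem:6.1}. Consequently, whenever for some index $j$ one of $\{\H_j x_{n,j}(\z), (\H_j x_{n,j})'(\z)\}$ converges to zero at both endpoints $\z = 0, 1$, the rate $\tfrac{1}{\sqrt{|\beta_n|}}\norm{(\H_j x_{n,j})'}_{L_2(0,1)} \to 0$ is available.

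The computational workhorse is the identity, valid for $\phi_n \in H^2(0,1)$ and $q \in C^1([0,1];\R)$,
\[
 \int_0^1 q\, \phi_n''\,\overline{\phi_n'}\,\dd\z = \tfrac{1}{2}\bigl[\,q\,|\phi_n'|^2\,\bigr]_0^1 - \tfrac{1}{2}\int_0^1 q'\,|\phi_n'|^2\,\dd \z,
\]
applied with $\phi_n = \H_j x_{n,j}$. Prescribing the endpoint values of $q$ (e.g.\ $q(0) = 0$, $q(1) = 1$) isolates the derivative trace at the chosen endpoint. Substituting $\phi_n'' = \pm \ii \beta_n x_{n,3-j} + g_{n,j}$ into the left-hand side produces integrals that are controlled either by Lemma~\ref{lem:6.1} (after multiplication by $1/\beta_n$) or by the $L_2$-boundedness of $(x_n)_n$ and $(g_{n,j})_n$. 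Taking real parts and appealing to the already-known traces gives the missing trace.

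For cases~\ref{case-1},~\ref{case-2},~\ref{case-6},~\ref{case-7}, the assumed convergences always supply enough information at both endpoints of at least one component to invoke Lemma~\ref{lem:6.1}; the multiplier identity above, with a cut-off that vanishes at the endpoint where all data is known, then transports the remaining scalar trace to the opposite endpoint. Cases~\ref{case-3},~\ref{case-4},~\ref{case-5} follow by applying the preceding cases componentwise to $j = 1$ and $j = 2$ separately, after one checks that the hypotheses in each case imply the prerequisites of the appropriate earlier case for each index.

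The principal obstacle is case~\ref{case-8}: assuming only $(\H x_n)(0), (\H x_n)(1) \to 0$, one must produce all four derivative traces $(\H_j x_{n,j})'(\z)$ at $\z = 0, 1$. Lemma~\ref{lem:6.1} yields only $o(\sqrt{|\beta_n|})$ control of $(\H_j x_{n,j})'$ in $L_2$, which is too weak for a direct pointwise conclusion. I would close this gap by coupling the two scalar identities to each other: testing $(\H_1 x_{n,1})'' = \ii \beta_n x_{n,2} + g_{n,2}$ against $q\,\overline{(\H_2 x_{n,2})'}$ (and symmetrically), integrating by parts twice, and using the partner identity to rewrite $\ii \beta_n x_{n,2} = (\H_1 x_{n,1})'' - g_{n,2}$ transfers the $\beta_n$-growth into a term of the form $\tfrac{1}{|\beta_n|}\int_0^1 q\,(\H_1 x_{n,1})''\,\overline{(\H_2 x_{n,2})''}\,\dd \z$, which is $L_2$-bounded and hence controllable. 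Taking real parts, the vanishing end values $(\H x_n)(\z) \to 0$ eliminate the boundary contributions involving $\H_j x_{n,j}$ themselves, so that only quadratic terms in the derivative traces $(\H_j x_{n,j})'(\z)$ survive at the boundary; a judicious choice of $q$ then isolates each of these in turn. Lipschitz regularity of $\H$ is essential at this step, since the commutators produced by differentiating $\H_j x_{n,j}$ generate factors of $\H_j'$ which must remain $L_\infty$-bounded uniformly in $n$.
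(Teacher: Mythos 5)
Your overall strategy --- multiplier identities anchored on Lemma \ref{lem:6.1}, with a cut-off $q$ prescribed at the endpoints to isolate one trace at a time --- is the right one, but the concrete workhorse identity you propose cannot deliver the conclusions, and this is the heart of the matter rather than a technicality. Testing $(\H_j x_{n,j})''$ against $q\,\overline{(\H_j x_{n,j})'}$ for a single component $j$ produces only the boundary quantity $\bigl[q\abs{(\H_j x_{n,j})'}^2\bigr]_0^1$, so it cannot produce the function-value traces $(\H x_n)(\z)$ required in cases \ref{case-1}, \ref{case-2}, \ref{case-4} and \ref{case-5}. Moreover the $\beta_n$-scaling does not close even for the derivative traces: after substituting $(\H_j x_{n,j})'' = \pm\ii\beta_n x_{n,3-j} + g_{n,j}$, the left-hand side contains $\beta_n\int_0^1 q\, x_{n,3-j}\overline{(\H_j x_{n,j})'}\dd\z = o(\abs{\beta_n}^{3/2})$, while the interior term on the right is $\int_0^1 q'\abs{(\H_j x_{n,j})'}^2\dd\z = o(\abs{\beta_n})$ (both via Lemma \ref{lem:6.1}); no normalisation by a power of $\beta_n$ turns this into the statement that the unweighted boundary trace vanishes. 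What the paper actually uses are two \emph{vector-valued, cross-component} multipliers carrying an explicit $1/\beta_n$: pairing the full equation with $\tfrac{\ii q}{\beta_n}(\H x_n)'$ (cases \ref{case-1}--\ref{case-5}), so that the term $-\ii\beta_n x_n$ loses its factor $\beta_n$ against the multiplier and integrates by parts into the coercive boundary form $\tfrac12\bigl[\sp{(\H x_n)(\z)}{(q\H^{-1})(\z)(\H x_n)(\z)}\bigr]_0^1$, while the second-order terms combine, thanks to the skew-symmetry of $\left[\begin{smallmatrix}0&-1\\1&0\end{smallmatrix}\right]$, into cross terms $\sp{(\H_2x_{n,2})'}{\tfrac{\ii q}{\beta_n}(\H_1x_{n,1})'}$ whose interior part is controlled by applying Lemma \ref{lem:6.1} to \emph{both} components; and pairing with $\tfrac{q}{\beta_n}\left[\begin{smallmatrix}0&-1\\1&0\end{smallmatrix}\right]x_n'$ (cases \ref{case-6}, \ref{case-7}). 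Your scheme contains no mechanism generating these correctly scaled boundary forms.

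Two further concrete errors. The reduction of cases \ref{case-4} and \ref{case-5} to a componentwise application of earlier cases fails: in case \ref{case-4} full data at $\z=1$ is available for only one of the two components, so none of cases \ref{case-1}, \ref{case-2}, \ref{case-6}, \ref{case-7} applies to the other component. The paper instead runs the first multiplier identity with $q$ \emph{constant} (so that the interior term involving $q'$ disappears, Lemma \ref{lem:6.1} being unavailable there), obtains $(\H x_n)(1)\to 0$ first, and only then invokes cases \ref{case-6} and \ref{case-7}. Finally, case \ref{case-8}, which you single out as the principal obstacle and attack with an unverified double-integration-by-parts sketch, is in the paper not a separate estimate at all: it follows immediately by combining cases \ref{case-6} and \ref{case-7}, both already established via the second multiplier. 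I would rebuild the proof around the two multipliers named above.
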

 
 \begin{proof}
 The first five cases \ref{case-1} to \ref{case-5} are based on the following multiplier argument.
 As in any case the sequence $\left( \frac{i q}{\beta_n} (\H x_n)' \right)_{n \geq 1} \subseteq L_2(0,1;\K^2)$ is bounded, for any fixed $q \in C^1([0,1];\R)$, we obtain from $(\mathfrak{A} - P_0 \H) x_n - \ii \beta_n x_n \rightarrow 0$ in $L_2(0,1;\K^2)$ that
   \begin{align*}
    &0
     \leftarrow \sp{(\mathfrak{A} - P_0 \H) x_n - \ii \beta_n x_n}{\frac{i q}{\beta_n} (\H x_n)'}_{L_2}
     \\
     &= - \sp{(\H_2 x_{n,2})''}{\frac{iq}{\beta_n} (\H_1 x_{n,1})'}_{L_2}
      + \sp{(\H_1 x_{n,1})''}{\frac{i q}{\beta_n} (\H_2 x_n)'}_{L_2}
      - \sp{x_n}{q (\H x_n)'}_{L_2}
      \\
     &= \sp{(\H_2 x_{n,2})'}{\frac{iq}{\beta_n} (\H_1 x_{n,1})''}_{L_2}
      + \sp{(\H_1 x_{n,1})''}{\frac{i q}{\beta_n} (\H_2 x_{n,2})'}_{L_2}
      + \sp{(\H_2 x_{n,2})'}{\frac{iq'}{\beta_n} (\H_1 x_{n,1})'}_{L_2}
      \\
      &\quad
      - \left[ \sp{(\H_2 x_{n,2})'(\z)}{\frac{i q(\z)}{\beta_n} (\H_1 x_{n,1})'(\z)}_{\K} \right]_0^1
      - \sp{(\H x_n)}{(q \H^{-1})' (\H x_n)}_{L_2}
      \\
      &\quad
      + \frac{1}{2} \left[ \sp{(\H x_n)(\z)}{(q \H^{-1})(\z) (\H x_n)(\z)}_{\K} \right]_0^1
      \\
     &= 2i \Im \sp{(\H_2 x_{n,2})'}{\frac{i q}{\beta_n} (\H_1 x_{n,1})''}_{L_2}
      + \sp{(\H_2 x_{n,2})'}{\frac{i q'}{\beta_n} (\H_1 x_{n,1})'}_{L_2}
      \\
      &\quad
      - \left[ \sp{(\H_2 x_{n,2})'(\z)}{\frac{iq}{\beta_n} (\H_1 x_{n,1})'(\z)}_{\K} \right]_0^1
      + \frac{1}{2} \left[ \sp{(\H x_n)(\z)}{(q (-q \H^{-1})(\z) (\H x_n)(\z)}_{\K} \right]_0^1
      + o(1)
   \end{align*} 
 where we denote by $o(1)$ any Terms that vanish as $n \rightarrow 0$.
 Taking the real part, this equality gives us that
  \begin{align}
   \Re \left( \sp{(\H_2 x_{n,2})'}{\frac{i q'}{\beta_n} (\H_1 x_{n,1})'}_{L_2}
    - \left[ \sp{(\H_2 x_{n,2})'(\z)}{\frac{i q(\z)}{\beta_n} (\H_1 x_{n,1})'(\z)}_{\K} \right]_0^1  \right)
    \nonumber \\
    + \frac{1}{2} \left[ \sp{(\H x_n)(\z)}{(q \H^{-1})(\z) (\H x_n)(\z)}_{\K} \right]_0^1
    &= o(1).
    \label{*}
  \end{align}
 The first of these terms can be estimated by $\frac{c}{|\beta_n|} \| (\H_2 x_{n,2})' \|_{L_2} \| (\H_1 x_{n,1})' \|_{L_2}$ which by Lemma \ref{lem:6.1} and under the constraints of the first or second case tends to zero as $n \rightarrow \infty$.
 The assertion for the first five cases then follow, namely
  \begin{enumerate}
   \item
    In the first case choose $q$ such that $q(0) = 0$ and $q(1) > 0$, then
     \[
      \frac{1}{2} \sp{(\H x_n)(1)}{(q \H^{-1})(1) (\H x_n)(1)}_{\K}
       \leq o(1),
     \]
    so $(\H x_n)(1) \rightarrow 0$ by positive definiteness of $\H(1)$.
   \item
    As before, this time choosing $q(1) = 0$ and $q(0) > 0$.
   \item
    Follows by combining the previous two cases \ref{case-1} and \ref{case-2} iteratively.
   \item
    In this case we do not have Lemma \ref{lem:6.1} at hand, but we may choose $q$ to be a constant $c \not= 0$.
    From equation \eqref{*} and the assumption on the boundary trace we then obtain that
     \[
      \sp{(\H x_n)(1)}{\H^{-1}(1) (\H x_n)(1)}_{\K}
       = o(1)
     \]
    so that $(\H x_n)(1) \rightarrow 0$ and $(\H x_n)(1) \rightarrow 0$.
    The assertion then follows from cases \ref{case-6} and \ref{case-7} below.
   \item
    For this case, repeat the argument of case \ref{case-4}.
  \end{enumerate}
We proceed by showing the assertion for the cases \ref{case-6} and \ref{case-7} by a similar multiplier argument, but this time using the multiplier $\frac{q}{\beta_n} \left[ \begin{smallmatrix} 0 & -1 \\ 1 & 0 \end{smallmatrix} \right] x_n'$. Since $\H$ is Lipschitz continuous, this is a bounded sequence in $L_2(0,1;\K^2)$ as well, so we find that
{\allowdisplaybreaks[3]
 \begin{align*}
  0
   &\leftarrow \sp{(\mathfrak{A} - P_0 \H) x_n - \ii \beta_n x_n}{\frac{q}{\beta_n} \left[ \begin{smallmatrix} 0 & -1 \\ 1 & 0 \end{smallmatrix} \right] x_n'}_{L_2}
   \\
   &= \sp{(\H_1 x_{n,1})''}{\frac{q}{\beta_n} x_{n,1}'}_{L_2}
    + \sp{(\H_2 x_{n,2})''}{\frac{q}{\beta_n} x_{n,2}'}_{L_2}
    \\
    &\quad
    - \sp{i x_{n,1}}{q x_{n,2}'}_{L_2}
    + \sp{i x_{n,2}}{q x_{n,1}'}_{L_2}
    \\
   &= \sp{(\H_1 x_{n,1})''}{\frac{q (\H_1)^{-1}}{\beta_n} (\H_1 x_{n,1})'}_{L_2}
    - \sp{(\H_1 x_{n,1})''}{\frac{q (\H_1)^{-1}}{\beta_n} (\H_1)' x_{n,1}}_{L_2}
    \\
    &\quad
    + \sp{(\H_2 x_{n,2})''}{\frac{q (\H_2)^{-1}}{\beta_n} (\H_2 x_{n,2})'}_{L_2}
    - \sp{(\H_2 x_{n,2})''}{\frac{q (\H_2)^{-1}}{\beta_n} (\H_2)' x_{n,2}}_{L_2}
    \\
    &\quad
    + \sp{i x_{n,1}'}{q x_{n,2}}_{L_2}
    - \sp{i x_{n,2}}{q x_{n,1}'}_{L_2}
    - \sp{i x_{n,2}}{q' x_{n,1}}_{L_2}
    + \left[ \sp{i x_{n,1}(\z)}{q(\z) x_{n,2}(\z)}_{\K} \right]_0^1
    \\
   &= - \frac{1}{2} \left( \sp{(\H_1 x_{n,1})'}{\frac{(q (\H_1)^{-1})'}{\beta_n} (\H_1 x_{n,1})'}_{L_2}
    + \sp{(\H_2 x_{n,2})'}{\frac{(q (\H_2)^{-1})'}{\beta_n} (\H_2 x_{n,2})'}_{L_2} \right)
    \\
    &\quad
    + \frac{1}{2} \left[ \sp{(\H_1 x_{n,1})'(\z)}{ \frac{(q (\H_1)^{-1})(\z)}{\beta_n} (\H_1 x_{n,1})'(\z)}_{\K}
    \right.    
    \\
    &\quad
    \left.
    + \sp{(\H_2 x_{n,2})'(\z)}{ \frac{(q (\H_2)^{-1})(\z)}{\beta_n} (\H_2 x_{n,2})'(\z)}_{\K} \right]_0^1
    \\
    &\quad
    + 2i \Im \sp{i x_{n_1}'}{q x_{n,2}}_{L_2}
    + \left[ \sp{i x_{n,1}(\z)}{q(\z) x_{n,2}(\z)}_{\K} \right]_0^1
    + o(1)
 \end{align*}}
Thus, taking the real part, we arrive at the equation
 \begin{align*}
  &\frac{1}{2} \left[ \sp{(\H_1 x_{n,1})'(\z)}{ \frac{(q (\H_1)^{-1})(\z)}{\beta_n} (\H_1 x_{n,1})'(\z)}_{\K}
   + \sp{(\H_2 x_{n,2})'(\z)}{ \frac{(q (\H_2)^{-1})(\z)}{\beta_n} (\H_2 x_{n,2})'(\z)}_{\K} \right]_0^1
   \\
   &+ \Re \left[ \sp{i x_{n,1}(\z)}{q(\z) x_{n,2}(\z)}_{\K} \right]_0^1
   \\
   &\qquad = \frac{1}{2} \left( \sp{(\H_1 x_{n,1})'}{\frac{(q (\H_1)^{-1})'}{\beta_n} (\H_1 x_{n,1})'}_{L_2}
    + \sp{(\H_2 x_{n,2})'}{\frac{(q (\H_2)^{-1})'}{\beta_n} (\H_2 x_{n,2})'}_{L_2} \right)
    + o(1).
 \end{align*}
Also for cases \ref{case-6} and \ref{case-7}, Lemma \ref{lem:6.1} gives us that $\frac{1}{\sqrt{|\beta_n|}} \norm{(\H x_n)'}_{L_2} = o(1)$, so that we obtain the result by choosing $q(0) = 0$ and $q(1) > 0$ or $q(0) > 0$ and $q(1) = 0$, respectively.
Finally, case \ref{case-8} follows by combining the results of cases \ref{case-6} and \ref{case-7}.
 \end{proof}

\begin{remark}
 Note that all the assertions of Lemmas \ref{lem:6.1} and \ref{lem:6.2} also hold for $\mathfrak{A}$ of the form
  \[
   \mathfrak{A} x
    = \left( \left[ \begin{array}{cc} 0 & -1 \\ 1 & 0 \end{array} \right]  \frac{\partial^2}{\partial \z^2} + P_1 \frac{\partial}{\partial \z} + P_0(\z) \right) \left[ \begin{array}{cc} \H_1(\z) & 0 \\ 0 & \H_2(\z) \end{array} \right] \left( \begin{array}{c} x_1(\z) \\ x_2(\z) \end{array} \right)
  \]
 where $P_1 \in \K^{2 \times 2}$ is any symmetric matrix.
 Namely, in cases \ref{case-1} to \ref{case-5} of Lemma \ref{lem:6.2} one may use that
  \[
   \Re \sp{P_1 (\H x_n)'}{\frac{i q}{\beta_n} (\H x_n)'}
    = 0,
  \]
 as $i q P_1$ is skew-symmetric.
 For the latter three cases \ref{case-6} to \ref{case-8} of Lemma \ref{lem:6.2} one may always use Lemma \ref{lem:6.1} to deduce that $\frac{1}{\sqrt{|\beta_n|}} \norm{(\H x_n)'}_{L_2} = o(1)$, but then also
  \[
   \abs{ \sp{P_1 (\H x_n)'}{\frac{q}{\beta_n} \left[ \begin{smallmatrix} 0 & -1 \\ 1 & 0 \end{smallmatrix} \right] x_n'} }
    \leq C \frac{1}{ \abs{\beta_n} } \norm{\H x_n}_{H^1}^2
    = o(1).
  \]
\end{remark}

\section*{Acknowledgement}

The author would like to thank Birgit Jacob for not only awaking his interest in port-Hamiltonian systems, but also sharing her knowledge, and having countless, fruitful discussions on the topic.

\end{document}